\theoremstyle{plain}
\newtheorem{theorem}{Theorem}[section]
\newtheorem{lemma}[theorem]{Lemma}
\newtheorem{proposition}[theorem]{Proposition}
\newtheorem{corollary}[theorem]{Corollary}
\newtheorem{assumption}[theorem]{Assumption}
\theoremstyle{definition}
\newtheorem{example}[theorem]{Example}
\newtheorem{remark}[theorem]{Remark}
\numberwithin{equation}{section} 
\newcommand{\ud}{\,\mathrm{d}}
\newcommand{\abs}[1]{\left\vert#1\right\vert}
\newcommand{\Vrt}[1]{\left\Vert #1 \right\Vert}
\newcommand{\Ang}[1]{\left\langle #1 \right\rangle}
\newcommand{\bb}[1]{\mathbb{#1}}
\newcommand{\filt}[0]{(\mathcal{F}_t)_{t\geq 0}}
\newcommand{\run}[0]{\textup{run}}
\newcommand{\term}[0]{\textup{term}}
\newcommand{\KLD}[2]{D_\textup{KL}(#1\Vert #2)}
\newcommand{\filtprobspace}[0]{(\Omega,\mathcal{F},(\mathcal{F}_t)_{t\geq 0},\mathbb{P})}
\begin{document}
\title{Fr\'{e}chet derivatives of expected functionals of solutions to stochastic differential equations}

\author{Han Cheng Lie\thanks{~Universit\"{a}t Potsdam, Institut f\"{u}r Mathematik, Karl-Liebknecht-Str. 24/25, D-14476, Potsdam, Germany.\ hanlie@uni-potsdam.de}}
\renewcommand\Affilfont{\small}
\affil[]{Department of Mathematics, Universit\"at Potsdam, Germany}

\date{}

\maketitle

\begin{abstract}
    In the analysis of stochastic dynamical systems described by stochastic differential equations (SDEs), it is often of interest to analyse the sensitivity of the expected value of a functional of the solution of the SDE with respect to perturbations in the SDE parameters. In this paper, we consider path functionals that depend on the solution of the SDE up to a stopping time. We derive formulas for Fr\'{e}chet derivatives of the expected values of these functionals with respect to bounded perturbations of the drift, using the Cameron-Martin-Girsanov theorem for the change of measure. Using these derivatives, we construct an example to show that the map that sends the change of drift to the corresponding relative entropy is not in general convex. We then analyse the existence and uniqueness of solutions to stochastic optimal control problems defined on possibly random time intervals, as well as gradient-based numerical methods for solving such problems. 
\end{abstract}

%

\section{Introduction}
\label{sec:intro}

Given a dynamical system that is described by a stochastic differential equation (SDE), one can use multiple quantities of interest to provide a high-level description of the system. These quantities of interest are often formulated as expected values of functionals of the trajectory of the system. A natural question is to analyse the sensitivity of this expected value to perturbations in the parameters that define the original system. In the context of stochastic analysis, sensitivity analysis of expected values of path functionals is often done using the Malliavin calculus, see e.g. \cite{Nualart2006,daPrato2014}. In mathematical finance for example, the Malliavin calculus was used to compute the sensitivity -- expressed in terms of Gateaux derivatives --  of the expected value of so-called `greeks' with respect to variations in the drift coefficient, the initial condition, and the diffusion coefficient \cite[Section 3]{Fournie1999}. 

This paper considers path functionals that depend on the solution of the SDE up to a stopping time. The main motivation for considering path functionals that depend on the solution up to a stopping time is to allow for functionals that depend on a random event, such as the first exit of a diffusion process from a bounded domain. Instead of using the Malliavin calculus, we use only the Cameron-Martin-Girsanov change of measure. The key assumption that we use for our results is the exponential integrability of the stopping time, up to some strictly positive scalar. 

Our first main result is a formula for the $n$-th order Fr\'{e}chet derivative of the expectation of any path functional that does not depend on the change of drift parameter, for any $n\in\bb{N}$. The formula recovers the integration by parts formula from Malliavin calculus. In addition, we state and prove a formula for the $n$-th order Fr\'{e}chet derivative of the expectation of the negative log Radon--Nikodym derivative, i.e. for the relative entropy or the expected quadratic cost of control. Since the negative log Radon--Nikodym derivative depends on the change of drift, this formula is not a consequence of the previous formula. However, the same proof strategy applies, and the formula turns out to be only slightly more complicated. These formulas are obtained under the assumption that the smallest nonzero singular value of the diffusion coefficient is bounded away from zero -- which is a weaker assumption than uniform ellipticity of the diffusion -- and the exponential integrability assumption on the stopping time that we mentioned earlier.

We apply the formulas for the Fr\'{e}chet derivatives to analyse the convexity properties of the map that sends the change of drift $u$ to the expected value of a path functional with respect to the law $\mu^{u}$ of the solution of the SDE with the perturbed drift. Using the formula for the Fr\'{e}chet derivative of the map that sends the change of drift to the relative entropy, we show by an example that this map is in general not convex. The example involves the application of the formula for the exit distribution of one-dimensional Brownian motion from a bounded interval containing 0. This example shows that, although the map that sends a pair of mutually absolutely continuous probability measures to their relative entropy is jointly convex in its arguments, the same is not in general true of the composition of this map with the map that sends the change of drift $u$ to the law $\mu^{u}$ of the solution of the SDE with perturbed drift.

The original motivation for this work is the analysis of a class of stochastic optimal control problems, where the constraint is given by the controlled SDE 
\begin{equation*}
 \ud X^{u}_t=(b+u)(t,X^{u}_t)\ud t+\sigma(t,X^{u}_t)\ud B_t,\quad X^{u}_0=x\in D
\end{equation*}
for suitable drift coefficient $b$, control or change of drift $u$, diffusion coefficient $\sigma$, standard $d$-dimensional Brownian motion $B$, and a bounded domain $D\subset\bb{R}^{d}$. The objective functional is
\begin{equation*}
 u\mapsto \bb{E}\left[\int_0^\tau \tilde{f}(s,X^{u}_s)+\frac{\lambda}{2}\abs{u(s,X^u_s)}^2\ud s+\tilde{g}(\tau,X^{u}_\tau)\right],
\end{equation*}
where $\tau$ is the first exit time from the bounded domain $D$, $\tilde{f}$ is a running cost function, and $\tilde{g}$ is a terminal cost function. The aim was to analyse the convergence of a gradient-based numerical method described in \cite{HartmannSchuette_2012} for solving problems in this class. In this paper, we achieve this by using the formulas for the derivatives to prove strict convexity of the objective functional, under suitable conditions. In particular, we can specify a set $U$ of admissible controls and sufficient conditions on the parameters such that the objective admits a unique minimiser $u^\ast \in U$. Note that we do not consider the Hamilton-Jacobi-Bellman equation associated to this optimal control problem, or the relationship between the unique minimiser $u^\ast$ in $U$ and the value function of the optimal control problem. 

To analyse the convexity of a map on a real Banach space taking values in the extended real line, it suffices to use the Gateaux derivatives of the map, provided they exist. This is because the convexity of such a map can be characterised in terms of the convexity of its restriction to the one-dimensional line segment between any two points in its domain. In \cite{Lie2016}, the first- and second-order Gateaux derivatives of the objective functional were derived and used to find sufficient conditions for the strict convexity of the objective functional. One important advantage of working with Fr\'{e}chet derivatives is that the proofs are simpler. In addition, we obtain the Fr\'{e}chet derivatives under weaker assumptions than those stated in \cite{Lie2016}.

Fr\'{e}chet derivatives are also useful in other contexts. In \cite{Koltai_2019}, it was shown that the Perron-Frobenius and Koopman operators associated to time-inhomogeneous SDEs and finite, deterministic time intervals were Fr\'{e}chet differentiable with respect to the drift. This linear response-type result was then applied to infinite-dimensional optimisation problems in the context of dynamical systems \cite{FroylandKoltaiStahn2020}.

The paper is organised as follows. In \cref{sec:setup} we introduce the `reference' SDE and other notation.  In \cref{sec:derivatives}, we present the formulas for the Fr\'{e}chet derivatives. We use these derivatives to analyse the convexity of the maps that send the change of drift to the expected value of certain path functionals in \cref{sec:convexity}. In \cref{sec:application_stochastic_optimal_control} we combine the results from the preceding two sections to investigate certain stochastic optimal control problems and Monte Carlo methods that use gradient information about the objective.  

\section{Setup}
\label{sec:setup}

In this section, we introduce some notation and basic assumptions. We fix a dimension $d\in\bb{N}$. For every $n\in\bb{N}$, $[n]:=\{1,\ldots,n\}$. We write $\bb{R}_{\geq 0}$ for the set $\{x\in\bb{R}\ :\  x\geq 0\}$. For a matrix $M\in\bb{R}^{m\times n}$ with $m,n\in\bb{N}$, $M^\top$, $M^+$ and $M^{-1}$ denote the transpose, pseudoinverse, and inverse respectively. For $1\leq p\leq \infty$, let $\abs{\cdot}_p$ denote the $\ell_p$ norm on Euclidean space. Given two normed vector spaces $V_1$ and $V_2$, $L(V_1;V_2)$ denotes the space of bounded, linear maps from $V_1$ to $V_2$. Given two topological spaces $X$ and $Y$, $C(X;Y)$ denotes the space of continuous functions from $X$ to $Y$. For arbitrary $\bb{R}$-valued random variables $X,Y$ defined on the same probability space, $\textup{Cov}(X,Y)$ and $\textup{Corr}(X,Y)$ denote the covariance and correlation respectively, and $\textup{Var}(X)$ denotes the variance of $X$. 

Let $\filtprobspace$ be a filtered probability space satisfying the usual conditions, and let $\mathbf{W}:=C(\bb{R}_{\geq 0};\bb{R}^d)$. Denote the set of coordinate mappings by $\{w(s)\ :\ s\geq 0\}$. For every $t\geq 0$, let $(\mathcal{B}_t)_{t\geq 0}$ denote the filtration on $\mathbf{W}$ generated by the coordinate mappings, i.e. $\mathcal{B}_t=\sigma(\{w(s)\ :\ s\leq t\})$ for every $t\geq 0$. A function $h$ on $\bb{R}_{\geq 0}\times\mathbf{W}$ is \emph{predictable} if $\{h(t,w)\ :\ t\in\bb{R}_{\geq 0}, w\in\mathbf{W}\}$ is a stochastic process that is predictable with respect to $(\mathcal{B}_t)_t$. For such a predictable function $h$, $s\geq 0$, and $w=(w(t))_{t\geq 0}\in\mathbf{W}$, we denote the value of $h$ at time $s$ on the path $w$ by $h(s,w_\bullet)$. An important special case is when there exists a function $\sigma$ on $\bb{R}_{\geq 0}\times \bb{R}^d$ such that $h(s,w)=\sigma(s,w(s))$ for every $w\in\mathbf{W}$. Then $h(s,X_\bullet)=\sigma(s,X_s)$.

Suppose we are given two predictable functions $f:\bb{R}_{\geq 0}\times\mathbf{W}\to\bb{R}^{d\times d}$ and $g:\bb{R}_{\geq 0}\times \mathbf{W}\to\bb{R}^d$. Consider the stochastic differential equation (SDE) with initial condition $X_0$, drift $g$, and diffusion $f$, represented by
\begin{equation}
\label{eq:sde}
X^0_t=X^0_0+\int_0^t g(s,X^0_\bullet)\mathrm{d}s+\int_0^t f(s,X^0_\bullet)\mathrm{d}B_s.
\end{equation}
Both $X^0$ and $B$ are $\bb{R}^d$-valued stochastic processes, while $B=(B_t)_{t\geq 0}$ is a standard $\filt$-Brownian motion. Throughout this paper, we shall make the following assumption:
\begin{assumption}
\label{asmp:exists_soln_to_sde_unique_in_law}
The coefficients $g$ and $f$ are such that there exists a solution $(X^0,B)$ on the filtered probability space $\filtprobspace$ to the SDE \eqref{eq:sde}, and this solution is unique in law. 
\end{assumption}
We use \cite[Chapter IX, Definitions 1.2 and 1.3]{RevuzYor2009} for the definition of a solution and for uniqueness in law. Next, let $(V,\Vrt{\cdot})$ be the normed vector space 
\begin{equation}
\begin{aligned}
 \label{eq:V_vector_space_of_changes_of_drift}
 V:=&\{h:\bb{R}_{\geq 0}\times\mathbf{W}\to\bb{R}^d\ :\ h\text{ is bounded and predictable}\}
 \\
 \Vrt{h}_V:=&\sup\{ \abs{h(t,w)}_2\ :\ (t,w)\in\bb{R}_{\geq 0}\times\mathbf{W}\}.
 \end{aligned}
\end{equation}

If $u\in V$, then given \cref{asmp:exists_soln_to_sde_unique_in_law}, it follows that
\begin{equation}
 \label{eq:sde_perturbed}
 X^u_t=X^u_0+\int_0^t (g+u)(s,X^u_\bullet)\mathrm{d}s+\int_0^t f(s,X^u_\bullet)\mathrm{d}B_s
\end{equation}
admits a solution $(X^u,B)$ that is unique in law, since the change of drift is bounded \cite[Chapter IX, Theorem 1.11]{RevuzYor2009}. 

For any continuous local martingale $M$ under $\bb{P}$, $\mathcal{E}(M)$ denotes the Dol\'{e}ans exponential martingale
\begin{equation}
\label{eq:exponential_martingale}
 \mathcal{E}(M)_t:=\exp\left(M_t-\tfrac{1}{2}\Ang{M,M}_t\right),\quad \forall t\geq 0.
\end{equation}
For any continuous local martingales $M=(M_t)_{t\geq 0}$ and $N=(N_t)_{t\geq 0}$ under $\bb{P}$, $\Ang{M,N}=(\Ang{M,N}_t)_{t\geq 0}$ is the associated covariance process that makes $(M_tN_t-\Ang{M,N}_t)_{t\geq 0}$ a continuous local martingale under $\bb{P}$. The Kunita-Watanabe inequality states that
\begin{equation}
 \label{eq:kunita_watanabe_inequality}
 \Ang{M,N}_t\leq \Ang{M,M}_t^{1/2}\Ang{N,N}_t^{1/2},\quad \forall t\geq 0.
\end{equation}
Next, define 
\begin{equation}
\label{eq:clmg}
 M^u_t(X,B):=\int_0^t \left(f^+ u(s,X_\bullet)\right)^\top\mathrm{d}B_s,\quad\forall t\geq 0,
\end{equation}
where $B$ is the standard $\bb{R}^d$-valued Brownian motion on $\filtprobspace$. We shall often write $M^u$ instead of $M^u(X,B)$. We shall use the linearity of the map $V\ni v\mapsto M^v$, i.e.
\begin{equation}
\label{eq:linearity_of_change_of_drift_to_CLMG}
M^{\lambda v+w}=\lambda M^{v}+M^{w},\quad \forall \lambda\in\bb{R},\ v,w\in V.
\end{equation}
In particular, for $u,v\in V$ and given \eqref{eq:clmg}, $\langle M^{u},M^{v}\rangle$ satisfies
\begin{equation}
\label{eq:quadratic_covariation_process}
\Ang{M^{u},M^{v}}_t(w_\bullet)=\int_0^t \left(u^\top(ff^\top)^{+}v\right)(s,w_\bullet)\ud s,\quad \forall t\geq 0,\ w\in\mathbf{W}.
\end{equation}
Given \eqref{eq:linearity_of_change_of_drift_to_CLMG}, the map $V\times V\ni (u,v)\mapsto \Ang{M^u,M^v}$ is symmetric and bilinear.

Suppose $u$ and $w\in V$ are such that, if we replace $u$ with $u+w$ in \eqref{eq:sde_perturbed}, then there exists a solution $(X^{u+w},B)$ and this solution is unique in law. Given a predictable function $\varphi:\bb{R}_{\geq 0}\times \mathbf{W}\times\mathbf{W}\to\bb{R}$ and a stopping time $\tau$ such that $\varphi_\tau(X^{u+w},B)\in L^1(\bb{P})$, we may use the exponential martingale in the reweighting formula
\begin{equation*}
 \bb{E}_{\bb{P}}[\varphi_\tau(X^{u+w},B)]=\bb{E}_{\bb{P}}[\varphi_\tau(X^u,B)\mathcal{E}(M^w(X^u,B))_\tau].
\end{equation*}
For any $u$ such that $\mu^{u}:=\bb{P}\circ (X^u,B)^{-1}$ exists and is unique, let $\bb{E}^{u}[\cdot]$ denote the associated expectation operator. We shall rewrite the reweighting formula above as
\begin{equation}
\label{eq:change_of_measure}
 \bb{E}^{u+w}[\varphi_\tau]=\bb{E}^{u}[\varphi_\tau\mathcal{E}(M^w)_\tau].
\end{equation}
For $p>0$, we define
\begin{equation*}
\Vrt{\widetilde{\varphi}}_{L^p(\mu^{u})}:=\bb{E}^{u}[\abs{\widetilde{\varphi}}^p]^{1/p},\quad L^p(\mu^{u}):=\{\widetilde{\varphi}:\mathbf{W}\times\mathbf{W}\to\bb{R}\ :\ \bb{E}^{u}[\abs{\widetilde{\varphi}}^p]^{1/p}<\infty\}.
\end{equation*}
Below, $\tau:\mathbf{W}\to\bb{R}_{\geq 0}$ denotes fixed, predictable $\filt$-stopping time that may be random or deterministic and that need not be bounded, $\phi:\mathbb{R}_{\geq 0}\times \mathbf{W}\to\bb{R}$ denotes a predictable function, and neither $\tau$ nor $\phi$ have any parametric dependence on the change of drift $u$ in \eqref{eq:sde_perturbed}. Unless we state otherwise, the path argument that we will give $\tau$ and $\phi$ is $X^u$, i.e. the first component of the solution to the SDE \eqref{eq:sde_perturbed}. For example, $\bb{E}^{u}[\phi_\tau]=\bb{E}_{\bb{P}}[\phi_{\tau(X^u)}(X^u)]$.

\section{Fr\'{e}chet derivatives}
\label{sec:derivatives}

In this section, we state a formula for the $n$-th order Fr\'{e}chet derivative of the map $u\mapsto \bb{E}^{u}[\phi_{\tau}]$ in \cref{lem:mixed_second_order_frechet_derivative_for_functional_not_depending_on_perturbation}. The formula recovers the integration by parts formula from Malliavin calculus. We also state a formula for the $n$-th order Fr\'{e}chet derivative of the map $u\mapsto \bb{E}^{u}[\Ang{M^u,M^u}_{\tau}]=\bb{E}^{u}[-\log\mathcal{E}(M^{-u})_\tau]$ in \cref{lem:mixed_second_order_frechet_derivative_for_KLD_mu_u_bar_mu_0_clmg_form}. These formulas are obtained under an assumption that is weaker than uniform ellipticity of the diffusion, and an exponential integrability assumption on the stopping time. We first state these assumptions below.
\begin{assumption}
\label{asmp:uniform_lower_bound_on_smallest_singular_value_of_diffusion}
 There exists a constant $\alpha>0$ such that 
 \begin{equation*}
  0\leq y^\top (ff^\top)^+(t,w_\bullet) y\leq \alpha^{-2} \abs{y}_2^2,\quad \forall y\in \bb{R}^d,\ w\in\mathbf{W},\ t\geq 0.
 \end{equation*}
\end{assumption}
In \cref{asmp:uniform_lower_bound_on_smallest_singular_value_of_diffusion},  we do not assume $f$ to be invertible. 
\begin{assumption}
 \label{asmp:ball_of_admissible_changes_of_drift}
There exists a convex set $U\subset V$ with $0\in U$ such that for every $u\in U$, there exists $\lambda_u>0$ such that $\bb{E}^{u}[\exp(\lambda_u\tau)]$ is finite. In particular, $\tau\in L^p(\mu^{u})$ for every $p\geq 1$.
\end{assumption}

The following result shows that if \cref{asmp:uniform_lower_bound_on_smallest_singular_value_of_diffusion} holds with parameter $\alpha$, and if there exists some $\lambda_0>0$ such that $\bb{E}^{0}[\exp(\lambda_0\tau)]$ is finite, then the set $U$ in \cref{asmp:ball_of_admissible_changes_of_drift} contains a ball centred at 0 with strictly positive radius that is proportional to $\lambda_0$. For the proof, see \cref{sec:proofs}.
\begin{lemma}
 \label{lem:existence_of_ball_of_admissible_changes_of_drift}
 Suppose that \cref{asmp:uniform_lower_bound_on_smallest_singular_value_of_diffusion} holds with parameter $\alpha$, and that there exists $\lambda_0>0$ such that $\bb{E}^{0}[\exp(\lambda_0\tau)]$ is finite. Let
 \begin{equation*}
  U_0:=\left\{v\in V\ :\ \Vrt{v}_V^2 \leq \lambda_0 \left(\tfrac{1}{4}\left(\tfrac{\sqrt{2}}{\sqrt{2}-1}\right)^2\alpha^{-2}\right)^{-1}\right\}.
 \end{equation*}
 Then $U_0\subset U$. In particular, if $\tau$ is a deterministic, fixed stopping time $T>0$, then in \cref{asmp:ball_of_admissible_changes_of_drift} we may choose $U=V$.
\end{lemma}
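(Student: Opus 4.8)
The plan is to show that every $v\in U_0$ is \emph{admissible} in the sense of \cref{asmp:ball_of_admissible_changes_of_drift}, i.e. that there is some $\lambda_v>0$ with $\bb{E}^{v}[\exp(\lambda_v\tau)]<\infty$; since $U_0$ is a ball (hence convex) containing $0$, this shows that $U_0$ itself qualifies as an admissible set and therefore $U_0\subset U$. Fix $v\in U_0$ and write $c:=\alpha^{-2}\Vrt{v}_V^2$, so that the membership condition reads $\tfrac14(\tfrac{\sqrt2}{\sqrt2-1})^2 c\le\lambda_0$. Applying the change-of-measure formula \eqref{eq:change_of_measure} (with $u=0$, $w=v$) to the bounded functionals $\varphi_\tau=\exp(\lambda\tau)\wedge n$ and letting $n\to\infty$ by monotone convergence gives
\begin{equation*}
 \bb{E}^{v}[\exp(\lambda\tau)]\le\bb{E}^{0}[\exp(\lambda\tau)\mathcal{E}(M^v)_\tau],
\end{equation*}
so it suffices to bound the right-hand side for some $\lambda>0$ (working with $\le$ rather than equality conveniently sidesteps the circularity of having to know the moment is finite before invoking \eqref{eq:change_of_measure}).

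Two ingredients drive the estimate. First, by \eqref{eq:quadratic_covariation_process}, \cref{asmp:uniform_lower_bound_on_smallest_singular_value_of_diffusion} and the definition of $\Vrt{\cdot}_V$, the quadratic variation is dominated by $\tau$: $\Ang{M^v,M^v}_\tau\le c\tau$. Second, for every $a\in\bb{R}$ the process $\mathcal{E}(aM^v)$ is a nonnegative continuous local martingale, hence a supermartingale; applying optional stopping at the bounded times $\tau\wedge t$ and letting $t\to\infty$ via Fatou (using that $\tau<\infty$ almost surely under $\mu^0$, since $\bb{E}^{0}[\exp(\lambda_0\tau)]<\infty$) yields $\bb{E}^{0}[\mathcal{E}(aM^v)_\tau]\le1$ for \emph{any} coefficient $a$. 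I would also record the interpolation bound $\bb{E}^{0}[\exp(\beta\tau)]\le\bb{E}^{0}[\exp(\lambda_0\tau)]^{\beta/\lambda_0}<\infty$, valid for $0\le\beta\le\lambda_0$ by Jensen's inequality.

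The core is an iterated Hölder argument on quantities $\bb{E}^{0}[\exp(\mu_k\tau)\mathcal{E}(a_kM^v)_\tau]$, started from $a_0=1$, $\mu_0=\lambda$. Using the identity $\mathcal{E}(a_kM^v)^{p_k}=\mathcal{E}(a_kp_kM^v)\exp(\tfrac12 a_k^2 p_k(p_k-1)\Ang{M^v,M^v})$ together with $\Ang{M^v,M^v}_\tau\le c\tau$, one Hölder step of exponent $p_k>1$ peels off a factor $\bb{E}^{0}[\exp(p_k'\mu_k\tau)]^{1/p_k'}$ and leaves a term of the same form with $a_{k+1}=a_kp_k$ and $\mu_{k+1}=\tfrac{c}{2}a_{k+1}(a_{k+1}-a_k)$. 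The structural observation is that the Hölder weights telescope to $\sum_k w_k=1-1/a_\infty<1$, so the retained Doléans factors each contribute at most $1$ by the supermartingale bound, while each peeled factor is finite provided $p_k'\mu_k\le\lambda_0$. Choosing $p_k\downarrow1$ so that $a_k\uparrow a_\infty<\infty$ and $\mu_k\downarrow0$, the binding constraints are those for $k\ge1$, namely $\tfrac{c}{2}\,a_{k+1}a_k\,(a_k-a_{k-1})/(a_{k+1}-a_k)\le\lambda_0$; optimising the increments of $(a_k)$ to relax these as much as possible produces $a_\infty=\tfrac{\sqrt2}{\sqrt2-1}$ and the threshold $\tfrac14(\tfrac{\sqrt2}{\sqrt2-1})^2 c\le\lambda_0$, which is exactly the defining inequality of $U_0$. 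For the deterministic case $\tau\equiv T$ no iteration is needed: $\bb{E}^{v}[\exp(\lambda\tau)]=e^{\lambda T}<\infty$ for every $v\in V$ and every $\lambda>0$, so every change of drift is admissible and we may take $U=V$.

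I expect the main obstacle to be the bookkeeping that makes the infinite iteration converge: one must choose the exponents $p_k$ so that simultaneously $a_k$ stays bounded, all retained exponents $p_k'\mu_k$ remain below $\lambda_0$ (so the interpolation bound applies), and the residual tail $\bb{E}^{0}[\exp(\mu_N\tau)\mathcal{E}(a_NM^v)_\tau]^{1/a_N}$ stays bounded as $N\to\infty$. It is precisely the optimisation of this exponent sequence --- balancing the growth of the Doléans coefficient $a_k$ against the size of the peeled-off exponential moments of $\tau$ --- that forces the constant $\tfrac{\sqrt2}{\sqrt2-1}$, and that extracts control of the extra weight $\exp(\lambda\tau)$ beyond the mere martingale property of $\mathcal{E}(M^v)$ which a one-step Novikov-type argument would yield.
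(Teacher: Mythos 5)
Your reduction is the right one and matches the paper's: show that each $v\in U_0$ satisfies $\bb{E}^{v}[\exp(\lambda_v\tau)]<\infty$ for some $\lambda_v>0$, observe that $U_0$ is convex and contains $0$ so that it qualifies under \cref{asmp:ball_of_admissible_changes_of_drift}, and dispatch the deterministic case directly. The gap is in the technical core. All of the difficulty is concentrated in bounding $\bb{E}^{0}[\exp(\lambda\tau)\,\mathcal{E}(M^v)_\tau]$, and your iterated H\"{o}lder scheme leaves precisely that part unproved: you do not exhibit a schedule $(p_k)_k$ for which (i) every peeled constraint $p_k'\mu_k\le\lambda_0$ holds simultaneously, (ii) the residual tail $\bb{E}^{0}[\exp(\mu_N\tau)\mathcal{E}(a_NM^v)_\tau]^{1/a_N}$ is controlled as $N\to\infty$ (the supermartingale bound $\bb{E}^{0}[\mathcal{E}(a_NM^v)_\tau]\le 1$ does not absorb the weight $\exp(\mu_N\tau)$, so this term needs its own argument), and (iii) the resulting admissibility threshold actually covers all of $U_0$. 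You flag this yourself as ``the main obstacle'', but it is not bookkeeping; it is the proof. Moreover, the claim that optimising the schedule ``forces'' the constant $\tfrac{\sqrt{2}}{\sqrt{2}-1}$ is asserted without computation and should not be expected: that constant is not intrinsic to the problem but is an artefact of the paper's particular chain of estimates (Kazamaki's criterion with $p=q=2$ preceded by one Cauchy--Schwarz step). Your recursion, if completed, would return its own constant, and you would then still have to check that the resulting ball contains the stated $U_0$; a rough look at the constraints $\tfrac{c}{2}a_{k+1}a_k(a_k-a_{k-1})/(a_{k+1}-a_k)\le\lambda_0$ with near-geometric increments suggests a threshold of the form $\Vrt{v}_V^2\le C\alpha^{2}\lambda_0$ with $C$ of order one, which is a different number.

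The paper avoids the iteration entirely by quoting Kazamaki's theorem (\cref{thm:kazamakis_theorem}) as a black box. Concretely, in \cref{cor:square_integrability_of_change_of_measure_at_stopping_time} one Cauchy--Schwarz step applied to the identity $\exp(\tfrac12 M_{\tau\wedge T})=\mathcal{E}(M)_{\tau\wedge T}^{1/2}\exp(\tfrac14\Ang{M,M}_{\tau\wedge T})$, combined with $\Ang{M^v,M^v}_{\tau}\le\alpha^{-2}\Vrt{v}_V^2\tau$ and the supermartingale bound $\bb{E}^{0}[\mathcal{E}(M^v)_{\tau\wedge T}]\le 1$, verifies Kazamaki's condition \eqref{eq:kazamakis_condition_for_Lq_boundedness_of_exp_MG} for the scaled martingale $\tfrac{\sqrt{2}}{\sqrt{2}-1}M^v$ exactly under the defining inequality of $U_0$; Kazamaki's theorem then yields $\mathcal{E}(M^v)_\tau\in L^2(\mu^{0})$; and a single H\"{o}lder step (\cref{lem:exponentially_integrable_stopping_time_on_mu_u}) transfers the exponential moment, $\bb{E}^{v}[\exp(\tfrac12\lambda_0\tau)]\le\bb{E}^{0}[\exp(\lambda_0\tau)]^{1/2}\Vrt{\mathcal{E}(M^v)_\tau}_{L^2(\mu^{0})}<\infty$. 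If you want to keep your from-scratch route you are in effect reproving a weighted version of Kazamaki's criterion, and you must carry out the iteration and the tail estimate in full; otherwise the short fix is to route the argument through the quoted theorem as the paper does.
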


Next, we state some basic results.
\begin{lemma}
\label{lem:uniform_bound_on_quadratic_variation_time_t}
 Let $\alpha>0$ be as in \cref{asmp:uniform_lower_bound_on_smallest_singular_value_of_diffusion}. Then for every $u,v\in V$, and for any predictable $\filt$-stopping time $\tau$,
\begin{equation*}
 \Ang{M^{u},M^{v}}_{\tau}\leq  \alpha^{-2}\Vrt{u}_V\Vrt{v}_V \tau.
\end{equation*}
If in addition \cref{asmp:ball_of_admissible_changes_of_drift} holds, then for every $r\geq 2$,
\begin{align}
 \Vrt{\Ang{M^u,M^v}_\tau}_{L^r(\mu^{u})} &\leq \alpha^{-2}\Vrt{u}_V\Vrt{v}_V\Vrt{\tau}_{L^r(\mu^{u})}
 \label{eq:Lr_bound_covariance}
 \\
 \Vrt{M^v_\tau}_{L^r(\mu^{u})} &\leq 2\alpha^{-1}\Vrt{v}_V \Vrt{\tau}_{L^{r/2}(\mu^{u})}^{1/2}.
 \label{eq:Lr_bound_clmg}
\end{align}
\end{lemma}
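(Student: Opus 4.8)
The plan is to establish the three estimates in sequence, using the pathwise bound as the foundation for the two $L^r$ bounds. The only ingredients I would need are the explicit formula \eqref{eq:quadratic_covariation_process} for the covariance process, the Kunita--Watanabe inequality \eqref{eq:kunita_watanabe_inequality}, the spectral bound of \cref{asmp:uniform_lower_bound_on_smallest_singular_value_of_diffusion}, and --- for \eqref{eq:Lr_bound_covariance} and \eqref{eq:Lr_bound_clmg} --- \cref{asmp:ball_of_admissible_changes_of_drift} together with a Burkholder--Davis--Gundy (BDG) moment inequality.

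For the pathwise bound I would first control the two diagonal covariance processes. By \eqref{eq:quadratic_covariation_process} and \cref{asmp:uniform_lower_bound_on_smallest_singular_value_of_diffusion}, for every $u\in V$ and every $s\ge 0$ the integrand $(u^\top(ff^\top)^+u)(s,w_\bullet)$ is bounded above by $\alpha^{-2}\abs{u(s,w_\bullet)}_2^2\le\alpha^{-2}\Vrt{u}_V^2$; integrating over $[0,\tau]$ gives $\Ang{M^u,M^u}_\tau\le\alpha^{-2}\Vrt{u}_V^2\,\tau$, and likewise for $v$. Applying the Kunita--Watanabe inequality \eqref{eq:kunita_watanabe_inequality} at the (random) time $t=\tau$, which is licit because \eqref{eq:kunita_watanabe_inequality} holds simultaneously for all $t$ almost surely, then yields
\begin{equation*}
 \Ang{M^u,M^v}_\tau\le\Ang{M^u,M^u}_\tau^{1/2}\Ang{M^v,M^v}_\tau^{1/2}\le\alpha^{-2}\Vrt{u}_V\Vrt{v}_V\,\tau,
\end{equation*}
which is the first claim. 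Replacing $u$ by $-u$ and invoking the bilinearity of $(u,v)\mapsto\Ang{M^u,M^v}$ noted after \eqref{eq:quadratic_covariation_process} gives the same bound for $-\Ang{M^u,M^v}_\tau$, and hence for $\abs{\Ang{M^u,M^v}_\tau}$.

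With the absolute-value version of the pathwise bound in hand, \eqref{eq:Lr_bound_covariance} is immediate: for $r\ge 2$, \cref{asmp:ball_of_admissible_changes_of_drift} guarantees $\tau\in L^r(\mu^{u})$, and monotonicity of the $L^r(\mu^{u})$-norm applied to $\abs{\Ang{M^u,M^v}_\tau}\le\alpha^{-2}\Vrt{u}_V\Vrt{v}_V\,\tau$ gives the stated inequality. For \eqref{eq:Lr_bound_clmg} I would observe that, under $\mu^{u}$, the process $M^v$ in \eqref{eq:clmg} is a continuous local martingale started at $0$, and apply a BDG inequality to the stopped process, so that $\Vrt{M^v_\tau}_{L^r(\mu^{u})}$ is controlled by $\Vrt{\Ang{M^v,M^v}_\tau^{1/2}}_{L^r(\mu^{u})}=\Vrt{\Ang{M^v,M^v}_\tau}_{L^{r/2}(\mu^{u})}^{1/2}$. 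Inserting the diagonal case of the pathwise bound, $\Ang{M^v,M^v}_\tau\le\alpha^{-2}\Vrt{v}_V^2\,\tau$, and taking $L^{r/2}(\mu^{u})$-norms then produces the factor $\alpha^{-1}\Vrt{v}_V\Vrt{\tau}_{L^{r/2}(\mu^{u})}^{1/2}$.

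The genuinely delicate point is the multiplicative constant in \eqref{eq:Lr_bound_clmg}: this is precisely the BDG constant relating the $r$-th moment of the terminal value of a continuous martingale to the $(r/2)$-th moment of its quadratic variation, and pinning it down --- in particular controlling it uniformly in $r$ by the stated value --- is where the real work lies and is the step I expect to be the main obstacle. The first two estimates, by contrast, reduce to the Kunita--Watanabe inequality and the pointwise spectral bound of \cref{asmp:uniform_lower_bound_on_smallest_singular_value_of_diffusion}, and require only that $\tau$ have finite $L^r(\mu^{u})$-moments, which \cref{asmp:ball_of_admissible_changes_of_drift} supplies.
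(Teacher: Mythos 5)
Your treatment of the pathwise bound and of \eqref{eq:Lr_bound_covariance} is correct and is essentially identical to the paper's: the paper likewise bounds the diagonal process $\Ang{M^u,M^u}_\tau$ by $\alpha^{-2}\Vrt{u}_V^2\tau$ using \eqref{eq:quadratic_covariation_process} and \cref{asmp:uniform_lower_bound_on_smallest_singular_value_of_diffusion}, applies the Kunita--Watanabe inequality \eqref{eq:kunita_watanabe_inequality}, and then takes $L^r(\mu^{u})$-norms. Your extra remark about obtaining the bound for $\abs{\Ang{M^u,M^v}_\tau}$ via bilinearity is harmless and not needed by the paper.

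The gap is in \eqref{eq:Lr_bound_clmg}: you invoke a Burkholder--Davis--Gundy inequality and then explicitly defer the ``delicate point'' of showing that the constant can be taken equal to $2$ uniformly in $r\geq 2$, so as written your argument does not establish the stated inequality. The paper closes this step differently, by quoting Doob's $L^r$ maximal inequality in the form
\begin{equation*}
\bb{E}_{\bb{P}}\Bigl[\sup_{0\leq t\leq\tau}\abs{M_t}^{r}\Bigr]^{1/r}\leq\frac{r}{r-1}\,\bb{E}_{\bb{P}}\bigl[\Ang{M,M}_\tau^{r/2}\bigr]^{1/r}
\end{equation*}
and using that $r\mapsto r/(r-1)$ is decreasing, hence at most $2$ for $r\geq 2$; combining this with \eqref{eq:Lr_bound_covariance} gives \eqref{eq:Lr_bound_clmg}.

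That said, the obstacle you flagged is genuine and is not in fact overcome by the displayed inequality. Doob's inequality controls $\sup_{t\leq\tau}\abs{M_t}$ by the \emph{terminal value} $\abs{M_\tau}$ with constant $r/(r-1)$; replacing $\abs{M_\tau}$ by $\Ang{M,M}_\tau^{1/2}$ on the right-hand side is precisely the BDG step, whose optimal constant grows like $\sqrt{r}$. The two coincide with the clean constant $2$ only at $r=2$, where Doob together with the It\^{o} isometry $\bb{E}[M_\tau^2]=\bb{E}[\Ang{M,M}_\tau]$ applies. For large $r$ the bound with the constant $2$ fails outright: taking $f\equiv 1$, $v\equiv 1$, $u=0$ and $\tau=1$ deterministic gives $M^v_1=B_1$, so the left-hand side of \eqref{eq:Lr_bound_clmg} is $\Vrt{Z}_{L^r}\sim\sqrt{r/e}$ for $Z\sim N(0,1)$ (already $\approx 2.16$ at $r=12$) while the right-hand side equals $2$. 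So your diagnosis of where the real work lies is accurate; the remedy is to carry an $r$-dependent BDG constant in \eqref{eq:Lr_bound_clmg}, which suffices for every subsequent use of the lemma since those uses only require finiteness and linear decay in $\Vrt{v}_V$ at each fixed $r$.
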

\begin{proof}[Proof of \cref{lem:uniform_bound_on_quadratic_variation_time_t}]
Given \eqref{eq:quadratic_covariation_process}, the definition of $\Vrt{\cdot}_V$ in \eqref{eq:V_vector_space_of_changes_of_drift}, and  \cref{asmp:uniform_lower_bound_on_smallest_singular_value_of_diffusion},  
\begin{equation*}
\Ang{M^u,M^u}_{\tau}(w)=\int_0^\tau \left(u^\top (ff^\top)^+ u\right)(s,w_\bullet)\ud s \leq \alpha^{-2}\Vrt{u}^2_V\tau,\quad \forall w\in\mathbf{W}. 
\end{equation*}
By the Kunita-Watanabe inequality \eqref{eq:kunita_watanabe_inequality}, the first bound \eqref{eq:Lr_bound_covariance} follows. Taking expectations yields \eqref{eq:Lr_bound_covariance}. For $r>1$ and for any continuous local martingale $M$ on $\filtprobspace$, Doob's inequality states 
	\begin{equation*}
	\bb{E}_{\bb{P}}\left[ \sup_{0\leq t\leq \tau}\abs{M_t}^{r}\right]^{1/r}\leq \left(\frac{r}{r-1}\right) \bb{E}_{\bb{P}}\left[ \Ang{M,M}_\tau^{r/2}\right]^{1/r}.
	\end{equation*}
	Combining Doob's inequality with the fact that $r\mapsto (r/(r-1))$ is decreasing on the interval $(1,\infty)$ implies that
	\begin{equation}
	 \label{eq:doobs_inequality}
	 \Vrt{M^v_\tau}_{L^r(\mu^{u})}\leq 2\Vrt{\Ang{M^v,M^v}_\tau}_{L^{r/2}(\mu^{u})}^{1/2},\quad\forall r\geq 2,
	\end{equation}
    and combining \eqref{eq:doobs_inequality} with \eqref{eq:Lr_bound_covariance} yields \eqref{eq:Lr_bound_clmg}.
\end{proof}

The next result is crucial for finding the formulas of Fr\'{e}chet derivatives. Its proof is given in \cref{sec:proofs}.
\begin{lemma}
\label{lem:convergence_lemmas_for_change_of_measure}
Suppose that  \cref{asmp:uniform_lower_bound_on_smallest_singular_value_of_diffusion} and \cref{asmp:ball_of_admissible_changes_of_drift} hold. Let $u\in U$. Then for every $s\geq 1$ and $0\leq r<1$,
\begin{align}
\lim_{\Vrt{v}_V\to 0}\frac{ \Vrt{ \mathcal{E}(M^v)_\tau-1-M^v_\tau}_{L^s(\mu^{u})}}{\Vrt{v}_V}=&  0.
 \label{eq:convergence_lemma_for_change_of_measure_minus_one_minus_CLMG}
 \\
\lim_{\Vrt{v}_V\to 0}\frac{ \Vrt{ \mathcal{E}(M^v)_\tau-1}_{L^s(\mu^{u})}}{\Vrt{v}_V^{r}}= & 0.
 \label{eq:convergence_lemma_for_change_of_measure_minus_one}
\end{align}
\end{lemma}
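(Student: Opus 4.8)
The plan is to prove both limits at once by establishing the single quantitative estimate
$\Vrt{\mathcal{E}(M^v)_\tau-1-M^v_\tau}_{L^s(\mu^u)}=O(\Vrt{v}_V^2)$ as $\Vrt{v}_V\to 0$, for every $s\geq 1$. This immediately gives \eqref{eq:convergence_lemma_for_change_of_measure_minus_one_minus_CLMG}. For \eqref{eq:convergence_lemma_for_change_of_measure_minus_one} I would write $\mathcal{E}(M^v)_\tau-1=(\mathcal{E}(M^v)_\tau-1-M^v_\tau)+M^v_\tau$; combining the above with $\Vrt{M^v_\tau}_{L^s(\mu^u)}=O(\Vrt{v}_V)$ — which follows from \eqref{eq:Lr_bound_clmg} and $\tau\in\bigcap_p L^p(\mu^u)$ from \cref{asmp:ball_of_admissible_changes_of_drift}, using monotonicity of $L^p$-norms under the probability measure $\mu^u$ to reach small $s$ — yields $\Vrt{\mathcal{E}(M^v)_\tau-1}_{L^s(\mu^u)}=O(\Vrt{v}_V)$, so that dividing by $\Vrt{v}_V^r$ leaves a factor $\Vrt{v}_V^{1-r}\to 0$ since $r<1$.

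The pathwise input is the elementary bound $\abs{e^x-1-x}\leq \tfrac12 x^2 e^{\abs{x}}$. Writing $Y:=M^v_\tau$ and $Z:=\Ang{M^v,M^v}_\tau\geq 0$, I apply this with $x=Y-\tfrac12 Z$ and use $x-Y=-\tfrac12 Z$ to get
\[
\mathcal{E}(M^v)_\tau-1-M^v_\tau=\left(e^{Y-Z/2}-1-(Y-\tfrac12 Z)\right)-\tfrac12 Z,
\]
whence $\abs{\mathcal{E}(M^v)_\tau-1-M^v_\tau}\leq (Y^2+\tfrac14 Z^2)e^{\abs{Y}+Z/2}+\tfrac12 Z$ via $(Y-\tfrac12 Z)^2\leq 2Y^2+\tfrac12 Z^2$ and $\abs{Y-\tfrac12 Z}\leq\abs{Y}+\tfrac12 Z$. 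By \eqref{eq:Lr_bound_covariance} the term $\tfrac12 Z$ has $L^s(\mu^u)$-norm $O(\Vrt{v}_V^2)$. For the first term Hölder's inequality gives $\Vrt{(Y^2+\tfrac14 Z^2)e^{\abs{Y}+Z/2}}_{L^s(\mu^u)}\leq \Vrt{Y^2+\tfrac14 Z^2}_{L^{2s}(\mu^u)}\,\Vrt{e^{\abs{Y}+Z/2}}_{L^{2s}(\mu^u)}$, where the polynomial factor is $O(\Vrt{v}_V^2)$ by \eqref{eq:Lr_bound_covariance}, \eqref{eq:Lr_bound_clmg}, and the integrability of $\tau$. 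The whole estimate therefore reduces to showing the exponential factor stays bounded as $\Vrt{v}_V\to 0$.

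The main obstacle, and the heart of the argument, is thus a uniform exponential-moment bound: for each fixed $c>0$ there is $\delta>0$ with $\sup_{\Vrt{v}_V\leq\delta}\bb{E}^u[e^{c\abs{M^v_\tau}}]<\infty$. Since $M^{-v}=-M^v$ by \eqref{eq:linearity_of_change_of_drift_to_CLMG} and $\Vrt{-v}_V=\Vrt{v}_V$, it suffices to bound $\bb{E}^u[e^{cM^v_\tau}]$ uniformly. The key trick is a \emph{single} Hölder application exploiting the supermartingale property of the Dol\'eans exponential: for $p>1$ one has $e^{cM^v_\tau}=\mathcal{E}(pc\,M^v)_\tau^{1/p}\exp(\tfrac{pc^2}{2}Z)$, and Hölder with exponents $p$ and $p/(p-1)$ gives
\[
\bb{E}^u[e^{cM^v_\tau}]\leq \bb{E}^u[\mathcal{E}(pc\,M^v)_\tau]^{1/p}\,\bb{E}^u\!\left[\exp\!\left(\tfrac{p^2c^2}{2(p-1)}Z\right)\right]^{(p-1)/p}.
\]
Because $\mathcal{E}(pc\,M^v)$ is a nonnegative continuous local martingale, hence a supermartingale, optional stopping (valid for the a.s.-finite integrable $\tau$ via Fatou) yields $\bb{E}^u[\mathcal{E}(pc\,M^v)_\tau]\leq 1$ regardless of $v$; this is exactly what avoids the divergent recursion produced by a naive split. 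For the second factor I use $Z\leq\alpha^{-2}\Vrt{v}_V^2\tau$ from \cref{lem:uniform_bound_on_quadratic_variation_time_t}, so that once $\tfrac{p^2c^2}{2(p-1)}\alpha^{-2}\Vrt{v}_V^2\leq\lambda_u$ it is at most $\bb{E}^u[e^{\lambda_u\tau}]^{(p-1)/p}<\infty$ by \cref{asmp:ball_of_admissible_changes_of_drift}.

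To finish, I would split the exponential factor once more, $\bb{E}^u[e^{2s(\abs{Y}+Z/2)}]\leq \bb{E}^u[e^{4s\abs{Y}}]^{1/2}\bb{E}^u[e^{2sZ}]^{1/2}$, controlling the first term by the uniform moment bound above and the second directly by $Z\leq\alpha^{-2}\Vrt{v}_V^2\tau$ together with the exponential integrability of $\tau$; thus $\Vrt{e^{\abs{Y}+Z/2}}_{L^{2s}(\mu^u)}$ is bounded for small $\Vrt{v}_V$. Combining this with the $O(\Vrt{v}_V^2)$ polynomial factor and the $O(\Vrt{v}_V^2)$ contribution of $\tfrac12 Z$ gives $\Vrt{\mathcal{E}(M^v)_\tau-1-M^v_\tau}_{L^s(\mu^u)}=O(\Vrt{v}_V^2)$, from which both stated limits follow as in the first paragraph.
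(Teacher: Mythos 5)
Your proof is correct, but it takes a genuinely different route from the paper's. The paper writes $\mathcal{E}(M^v)_\tau-1-M^v_\tau$ as $-\tfrac12\Ang{M^v,M^v}_\tau$ plus the tail $\sum_{\ell\geq 2}\tfrac{1}{\ell!}(M^v_\tau-\tfrac12\Ang{M^v,M^v}_\tau)^\ell$ of the exponential series, raises the tail to the $s$-th power via a multinomial/binomial expansion, controls each mixed moment $\bb{E}^{u}[\abs{M^v_\tau}^{n-m}\Ang{M^v,M^v}_\tau^{m}]$ with Doob's and Young's inequalities, and finally dominates the whole sum by $\exp(s\Vrt{v}_V^{1/2})\,\bb{E}^{u}[\exp(2\alpha^{-2}s\Vrt{v}_V^{1/2}\tau)]$ so that dominated convergence delivers the (rate-free) limit; the argument there is carried out for integer $s$. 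You instead use the pointwise remainder bound $\abs{e^x-1-x}\leq\tfrac12 x^2e^{\abs{x}}$, split off the polynomial prefactor by H\"{o}lder, and reduce everything to a uniform exponential-moment bound for $M^v_\tau$, which you obtain from the supermartingale property $\bb{E}^{u}[\mathcal{E}(pc\,M^v)_\tau]\leq 1$ (via optional stopping and Fatou, legitimate since \cref{asmp:ball_of_admissible_changes_of_drift} forces $\tau<\infty$ a.s.) together with $\Ang{M^v,M^v}_\tau\leq\alpha^{-2}\Vrt{v}_V^2\tau$ and the exponential integrability of $\tau$. This is essentially the same Cauchy--Schwarz/Kazamaki-type factorisation the paper uses to prove \cref{cor:square_integrability_of_change_of_measure_at_stopping_time}, although the paper does not invoke that corollary in its own proof of this lemma. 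What your route buys is an explicit quantitative rate $\Vrt{\mathcal{E}(M^v)_\tau-1-M^v_\tau}_{L^s(\mu^{u})}=O(\Vrt{v}_V^2)$ rather than mere $o(\Vrt{v}_V)$, validity for all real $s\geq 1$ without extra work, and no combinatorial bookkeeping; what the paper's route buys is that it stays entirely at the level of polynomial moments of $M^v_\tau$ and $\Ang{M^v,M^v}_\tau$, never needing exponential moments of the martingale itself. Both arguments rest on exactly the same essential input, namely $\bb{E}^{u}[\exp(\lambda_u\tau)]<\infty$ for some $\lambda_u>0$, exploited for $\Vrt{v}_V$ sufficiently small. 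Your reduction of \eqref{eq:convergence_lemma_for_change_of_measure_minus_one} to \eqref{eq:convergence_lemma_for_change_of_measure_minus_one_minus_CLMG} via the triangle inequality and \eqref{eq:Lr_bound_clmg} is the same as the paper's.
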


With all the preparations in place, we can now state the formula for the first Fr\'{e}chet derivative of the map that sends the change of drift to the expectation of a functional of the controlled diffusion, where the functional does not exhibit parametric dependence on the change of drift. By \eqref{eq:clmg}, $M^v_\tau$ is an It\^{o} integral. Hence, the formula for the Fr\'{e}chet derivative agrees with the stochastic integration by parts formula from the Malliavin calculus. For example, one can compare the result below with \cite[Proposition 3.1]{Fournie1999} or \cite[Lemma 1.2.1]{Nualart2006}.
\begin{lemma}
\label{lem:frechet_derivative_for_functional_not_depending_on_perturbation}
Suppose that \cref{asmp:uniform_lower_bound_on_smallest_singular_value_of_diffusion} and \cref{asmp:ball_of_admissible_changes_of_drift} hold. If $\phi_\tau\in L^2(\mu^{u})$, then the Fr\'{e}chet derivative of the map $U\ni u'\mapsto \bb{E}^{u'}[\phi_\tau]$ at $u$ is given by the linear map $V\ni v\mapsto \bb{E}^{u}[\phi_\tau M^{v}_\tau]$.
\end{lemma}
\begin{proof}[Proof of \cref{lem:frechet_derivative_for_functional_not_depending_on_perturbation}]
Using \eqref{eq:change_of_measure} and the Cauchy-Schwarz inequality,
\begin{align*}
 \abs{\bb{E}^{u+v}[\phi_\tau]-\bb{E}^{u}[\phi_\tau]-\bb{E}^{u}[\phi_\tau M^{v}_\tau]} =&\abs{\bb{E}^{u}\left[\phi_\tau\left(\mathcal{E}(M^v)_\tau-1-M^v_\tau\right)\right]}
 \\
 \leq & \Vrt{\phi_\tau}_{L^2(\mu^{u})}\Vrt{\mathcal{E}(M^{v})_\tau-1-M^{v}_\tau}_{L^2(\mu^{u})}.
\end{align*}
Thus, by using \eqref{eq:convergence_lemma_for_change_of_measure_minus_one_minus_CLMG} in \cref{lem:convergence_lemmas_for_change_of_measure}, 
\begin{equation*}
\lim_{\Vrt{v}_V\to 0}\frac{\abs{\bb{E}^{u+v}[\phi_\tau]-\bb{E}^{u}[\phi_\tau]-\bb{E}^{u}[\phi_\tau M^{v}_\tau]}}{\Vrt{v}_V}\leq
\Vrt{\phi_\tau}_{L^2(\mu^u)} \lim_{\Vrt{v}_V\to 0}\frac{\Vrt{\mathcal{E}(M^{v})_\tau-1-M^{v}_\tau}_{L^2(\mu^{u})}}{\Vrt{v}_V}=0.
\end{equation*}
By \eqref{eq:clmg}, the map $v\mapsto M^{v}_\tau$ is linear, and hence so is $v\mapsto \bb{E}^{u}[\phi_\tau M^{v}_\tau]$. Using the Cauchy-Schwarz inequality and \eqref{eq:Lr_bound_clmg}, we have
\begin{equation*}
 \abs{\bb{E}^{u}[\phi_\tau M^{v}_\tau]}\leq \Vrt{\phi_\tau}_{L^2(\mu^{u})}\Vrt{M^v_\tau}_{L^2(\mu^{u})} \leq \Vrt{\phi_\tau}_{L^2(\mu^{u})} \alpha^{-1}\Vrt{\tau}_{L^1(\mu^{u})}^{1/2} \Vrt{v}_V,\quad \forall v\in V,
\end{equation*}
which proves that $v\mapsto \bb{E}^{u}[\phi_\tau M^{v}_\tau]$ is bounded.
\end{proof}
To obtain the second Fr\'{e}chet derivative of the map $u'\mapsto \bb{E}^{u'}[\phi_\tau]$, we use the following result. 
\begin{lemma}
 \label{lem:mixed_second_order_frechet_derivative_for_functional_not_depending_on_perturbation}
 Suppose that \cref{asmp:uniform_lower_bound_on_smallest_singular_value_of_diffusion} and \cref{asmp:ball_of_admissible_changes_of_drift} hold. Let $u\in U$ and $n\in\bb{N}$. If $\phi_\tau\in L^2(\mu^{u})$ and $\phi_\tau$ is $\mu^{u}$-almost surely nonconstant, then the $n$-th order Fr\'{e}chet derivative of $U\ni u'\mapsto \bb{E}^{u'}[\phi_\tau ]$ at $u$ is given by the $n$-linear map 
 \begin{equation*}
   \times_{k=1}^{n} V\ni (v_1,\ldots,v_n)\mapsto \bb{E}^{u}\left[\phi_\tau \prod_{k=1}^{n}M^{v_k}_{\tau}\right].
 \end{equation*}
\end{lemma}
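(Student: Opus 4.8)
The plan is to argue by induction on $n$, exploiting that the $n$-th order Fréchet derivative of $F:u'\mapsto\bb{E}^{u'}[\phi_\tau]$ is the first Fréchet derivative of the $(n-1)$-linear-map-valued map $G:u'\mapsto D^{n-1}F(u')$, which by the inductive hypothesis is $u'\mapsto\big((v_1,\dots,v_{n-1})\mapsto\bb{E}^{u'}[\phi_\tau\prod_{k=1}^{n-1}M^{v_k}_\tau]\big)$. The base case $n=1$ is exactly \cref{lem:frechet_derivative_for_functional_not_depending_on_perturbation}. For the inductive step I would fix directions $v_1,\dots,v_{n-1}$ with $\Vrt{v_k}_V\le1$, set $\psi_\tau:=\phi_\tau\prod_{k=1}^{n-1}M^{v_k}_\tau$, and attempt to differentiate $u'\mapsto\bb{E}^{u'}[\psi_\tau]$ in the direction $v_n$ exactly as in the proof of \cref{lem:frechet_derivative_for_functional_not_depending_on_perturbation}, reducing everything to expectations under $\mu^{u}$ so that only $u\in U$ is needed for integrability.

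Concretely, I would use the reweighting formula \eqref{eq:change_of_measure} to write the increment as $\bb{E}^{u+v_n}[\psi_\tau]-\bb{E}^{u}[\psi_\tau]-\bb{E}^{u}[\psi_\tau M^{v_n}_\tau]=\bb{E}^{u}[\psi_\tau(\mathcal{E}(M^{v_n})_\tau-1-M^{v_n}_\tau)]$ and bound the right-hand side by Hölder's inequality. Since the only integrability assumed for $\phi_\tau$ is $L^2(\mu^{u})$, I would split the product with exponents satisfying $\tfrac12+\tfrac{n-1}{p}+\tfrac1s=1$, placing $\phi_\tau$ in $L^2(\mu^{u})$, each factor $M^{v_k}_\tau$ in $L^p(\mu^{u})$, and the remainder in $L^s(\mu^{u})$. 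By \cref{asmp:ball_of_admissible_changes_of_drift} we have $\tau\in L^{p/2}(\mu^{u})$ for every $p$, so \eqref{eq:Lr_bound_clmg} gives $\Vrt{M^{v_k}_\tau}_{L^p(\mu^{u})}\le2\alpha^{-1}\Vrt{\tau}_{L^{p/2}(\mu^{u})}^{1/2}$ uniformly over $\Vrt{v_k}_V\le1$; taking $p>2(n-1)$ forces $s>2\ge1$, so \eqref{eq:convergence_lemma_for_change_of_measure_minus_one_minus_CLMG} applies and the remainder norm is $o(\Vrt{v_n}_V)$. Dividing by $\Vrt{v_n}_V$ and taking the supremum over $\Vrt{v_k}_V\le1$ would then give convergence to $0$ in the operator norm of $(n-1)$-linear maps, i.e. Fréchet differentiability of $G$ at $u$; boundedness and $n$-linearity of the candidate limit follow from the same Hölder bound with all exponents equal to $2n$.

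The step I expect to be the genuine obstacle is the justification that \eqref{eq:change_of_measure} may be applied to $\psi_\tau$. Unlike the functional $\phi$ in \cref{lem:frechet_derivative_for_functional_not_depending_on_perturbation}, the integrand $\psi_\tau$ depends on the Brownian motion $B$ through the Itô integrals $M^{v_k}_\tau$ in \eqref{eq:clmg}, and for such $B$-dependent functionals the change of measure is delicate. A careful Girsanov argument shifts $B$ by $\int_0^{\cdot}(f^+v_n)(s,X^u_\bullet)\ud s$ inside each stochastic integral, which by \eqref{eq:quadratic_covariation_process} replaces each $M^{v_k}_\tau$ by $M^{v_k}_\tau-\Ang{M^{v_k},M^{v_n}}_\tau$; consequently, after expanding $\mathcal{E}(M^{v_n})_\tau=1+M^{v_n}_\tau+O(\Vrt{v_n}_V^2)$, correction terms proportional to $\Ang{M^{v_k},M^{v_n}}_\tau$ appear at first order. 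Tracking these covariation contributions is the heart of the matter: the naive application of the reweighting identity with $\psi_\tau$ left unshifted (which is what yields the bare product $\bb{E}^{u}[\phi_\tau\prod_{k=1}^{n}M^{v_k}_\tau]$) is precisely the place I would scrutinise, and I would expect the rigorous computation to force a Wick-type rather than ordinary product. (The hypothesis that $\phi_\tau$ is $\mu^{u}$-almost surely nonconstant rules out the degenerate case $\phi_\tau\equiv\mathrm{const}$, for which $\bb{E}^{u}[\phi_\tau\prod_k M^{v_k}_\tau]$ manifestly cannot represent the derivative of a constant map; I do not otherwise see it entering the estimates above.)
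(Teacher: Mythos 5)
Your first two paragraphs reproduce the paper's proof almost exactly: the paper argues by induction with base case \cref{lem:frechet_derivative_for_functional_not_depending_on_perturbation}, applies \eqref{eq:change_of_measure} to the functional $\phi_\tau\prod_{k=1}^{n}M^{v_k}_\tau$ to write the increment as $\bb{E}^{u}[\phi_\tau\prod_{k}M^{v_k}_\tau(\mathcal{E}(M^{v_{n+1}})_\tau-1-M^{v_{n+1}}_\tau)]$, and estimates it by H\"{o}lder with exponents $(2,4,4)$ rather than your $(2,p,s)$ split --- an immaterial difference. The substantive part of your proposal is the third paragraph, and there you have put your finger on a step the paper does not justify: the reweighting identity \eqref{eq:change_of_measure} comes from Girsanov's theorem, which identifies the law of $(X^{u+w},B)$ under $\bb{P}$ with the law of $(X^{u},\tilde{B})$ under the tilted measure, where $\tilde{B}=B-\int_0^{\cdot}(f^+w)(s,X^u_\bullet)\ud s$. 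Applying it to a functional that depends on the second path through the It\^{o} integrals \eqref{eq:clmg} therefore replaces each $M^{v_k}_\tau$ by $M^{v_k}_\tau-\Ang{M^{v_k},M^{w}}_\tau$, exactly as you say, and these covariation corrections survive at first order in $w$. The paper's inductive step applies \eqref{eq:change_of_measure} with the second argument left unshifted and so obtains the bare product.

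Your suspicion that the rigorous computation forces a Wick-type product can be confirmed by an elementary example, so what you have found is a genuine gap in the paper's argument rather than a defect of your own. Take $d=1$, $g=0$, $f=1$, $X_0=0$, $\tau=T$ deterministic (so $U=V$ by \cref{lem:existence_of_ball_of_admissible_changes_of_drift}), $\phi_T(w)=w_T$, and restrict to constant $u,v,w$. Then $u'\mapsto\bb{E}^{u'}[\phi_T]=u'T$ is affine along constant directions, so the second derivative at $(v,w)$ must vanish; the stated formula instead gives $\bb{E}[(uT+B_T)\,vB_T\,wB_T]=uvwT^2\neq 0$ for $u\neq 0$, whereas the covariation-corrected expression $\bb{E}^{u}[\phi_T(M^v_TM^w_T-\Ang{M^v,M^w}_T)]=uvwT^2-uT\cdot vwT=0$ is correct. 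The same correction also removes the inconsistency the paper itself records for constant $\phi_\tau$ --- the one that motivates the nonconstancy hypothesis, which, as you rightly observe, enters none of the estimates --- since for $\phi_\tau\equiv c$ the corrected expression vanishes by the It\^{o} isometry. So the obstacle you flag is not one you failed to overcome; it is one the paper's proof silently steps over, and tracking the Girsanov shift changes the formula for every $n\geq 2$.
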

If we apply the formula above for some $\phi_\tau$ that $\mu^{u}$-almost surely equal to some nonzero constant $c\in\bb{R}$, then we obtain $c\cdot\bb{E}^{u}[\prod_{k=1}^{n}M^{v_k}_{\tau}]$, which need not be zero. For example, if $n=2$ and $v_1=v_2\in V\setminus \{0\}$ is such that $\mu^{u}(\Ang{M^{v_1},M^{v_1}}_\tau>0)>0$, then by the It\^{o} isometry, $c\cdot\bb{E}^{u}[\prod_{k=1}^{n}M^{v_k}_{\tau}]=c\cdot\bb{E}^{u}[\Ang{M^{v_1},M^{v_1}}_\tau]\neq 0$. This produces a contradiction with the fact that all Fr\'{e}chet derivatives of $u\mapsto c$ must vanish. For this reason, we introduce the assumption that $\phi_\tau$ be $\mu^{u}$-almost surely nonconstant in \cref{lem:mixed_second_order_frechet_derivative_for_functional_not_depending_on_perturbation}.
\begin{remark}
\label{rem:placeholder}
 In the proof of \cref{lem:mixed_second_order_frechet_derivative_for_functional_not_depending_on_perturbation}, we do not consider the case where one or more of the $(v_k)_{k=1}^{n}$ are equal to $u$. This is because the $(v_k)_{k=1}^{n}$ are only `placeholder' vectors in $V$. In other words, the $(v_k)_{k=1}^{n}$ should not be considered as fixed vectors in $V$, because the $(n+1)$-th order Fr\'{e}chet derivative is the derivative of the $n$-th order Fr\'{e}chet derivative, considered as an $n$-linear map on $\times_{k=1}^{n}V$.
\end{remark}

\begin{proof}[Proof of \cref{lem:mixed_second_order_frechet_derivative_for_functional_not_depending_on_perturbation}]
We prove the claim by induction. The case $n=1$ is given in \cref{lem:frechet_derivative_for_functional_not_depending_on_perturbation}. Let $v_{n+1}\in V$ be arbitrary, and suppose that the claim is true for some $n\leq 1$. Then by \eqref{eq:change_of_measure},
\begin{align*}
 &\abs{\bb{E}^{u+v_{n+1}}\left[\phi_\tau \prod_{k=1}^{n} M^{v_k}_{\tau}\right]-
 \bb{E}^{u}\left[\phi_\tau \prod_{k=1}^{n} M^{v_k}_{\tau}\right]
 -\bb{E}^{u}\left[\phi_\tau \prod_{k=1}^{n} M^{v_k}_{\tau} M^{v_{n+1}}_{\tau}\right]}
 \\
 =& \abs{\bb{E}^{u}\left[ \phi_\tau \prod_{k=1}^{n} M^{v_k}_{\tau}\left(\mathcal{E}(M^{v_{n+1}})_{\tau}-1-M^{v_{n+1}}_{\tau}\right)\right]}
 \\
 \leq & \Vrt{\phi_\tau}_{L^2(\mu^{u})}\Vrt{ \prod_{k=1}^{n} M^{v_k}_{\tau}}_{L^4(\mu^{u})}\Vrt{\mathcal{E}(M^{v_{n+1}})_\tau-1-M^{v_{n+1}}_{\tau}}_{L^4(\mu^{u})}.
\end{align*}
Dividing both sides of the inequality by $\Vrt{v_{n+1}}_V$, taking the limit as $\Vrt{v_{n+1}}_V\to 0$, and using \eqref{eq:convergence_lemma_for_change_of_measure_minus_one_minus_CLMG} yields the desired limit relation. The $n$-linearity follows from the linearity of the map $v_i\mapsto M^{v_i}_{\tau}$ for every $i\in\{1,\ldots,n+1\}$, which follows from \eqref{eq:clmg}. Boundedness of the map follows by H\"{o}lder's inequality and \eqref{eq:Lr_bound_clmg}:
\begin{align*}
\abs{\bb{E}^{u}\left[\phi_\tau \prod_{k=1}^{n} M^{v_k}_{\tau}\right]}&\leq \Vrt{\phi_{\tau}}_{L^2(\mu^{u})}  \Vrt{\prod_{k=1}^{n} M^{v_k}_{\tau}}_{L^2(\mu^{u})}\leq \Vrt{\phi_{\tau}}_{L^2(\mu^{u})}\prod_{k=1}^{n} \Vrt{M^{v_k}_{\tau}}_{L^{2n}(\mu^{u})}
\\
&\leq \Vrt{\phi_{\tau}}_{L^2(\mu^{u})}\left(2\alpha^{-1}\Vrt{\tau}_{L^{n}(\mu^{u})}^{1/2}\right)^{n}\prod_{k=1}^{n}\Vrt{v_k}_V.
\end{align*}

\end{proof}

So far we have considered Fr\'{e}chet derivatives of maps of the form $u\mapsto \bb{E}^{u}[\phi_\tau]$, where the functional $\phi_\tau$ has no parametric dependence on $u$. Next, we consider the map $u\mapsto \bb{E}^{u}[\Ang{M^u,M^u}_\tau]$. Since the functional $\Ang{M^u,M^u}_\tau$ exhibits parametric dependence on $u$, we cannot apply \cref{lem:frechet_derivative_for_functional_not_depending_on_perturbation} or \cref{lem:mixed_second_order_frechet_derivative_for_functional_not_depending_on_perturbation}. One motivation for considering this functional is that if $u\in U$ and $w\in V$ are such that $\mu^{u}$ and $\mu^{u+w}$ are locally equivalent, then the relative entropy or Kullback-Leibler divergence of $\mu^{u}$ with respect to $\mu^{u+w}$ on $\mathcal{F}_\tau$ satisfies
\begin{equation}
\label{eq:KLD}
\KLD{\mu^{u}}{\mu^{u+w}}\vert_{\mathcal{F}_\tau}=\bb{E}^{u}[-\log\mathcal{E}(M^{w})_\tau]= \tfrac{1}{2}\bb{E}^{u}\left[\Ang{M^w,M^w}_\tau\right]=\tfrac{1}{2}\bb{E}^{u}\left[(M^w_\tau)^2\right],
\end{equation}
by \eqref{eq:change_of_measure}, \eqref{eq:exponential_martingale},  \eqref{eq:quadratic_covariation_process}, and the It\^{o} isometry. The relevance of the relative entropy can be seen as follows. If we interpret $X^u_t$ as the position of a particle at time $t$ whose velocity changes according to an ambient force $g$ -- e.g. the force acting a particle by its environment -- and a control force $u$, then by \eqref{eq:quadratic_covariation_process}, one can view the relative entropy term as the total `kinetic energy' cost of the control $u$. Thus, in problems of stochastic optimal control, the presence of the relative entropy term in the objective ensures that the objective cannot be minimised or optimised by using arbitrarily large control. 

The following two results are analogues of \cref{lem:frechet_derivative_for_functional_not_depending_on_perturbation} and \cref{lem:mixed_second_order_frechet_derivative_for_functional_not_depending_on_perturbation}, and their proofs follow similar steps. For this reason, we postpone the proofs to \cref{sec:proofs}.

\begin{lemma}
 \label{lem:frechet_derivative_for_KLD_mu_u_bar_mu_0_clmg_form}
Suppose \cref{asmp:uniform_lower_bound_on_smallest_singular_value_of_diffusion} and \cref{asmp:ball_of_admissible_changes_of_drift} hold. The Fr\'{e}chet derivative of $U\ni u'\mapsto \bb{E}^{u'}[(M^{u'}_\tau)^2]$ at $u$ is given by
 \begin{equation*}
   V\ni v\mapsto \bb{E}^{u}\left[\left(\left(M^{u}_\tau\right)^2+2M^{u}_{\tau}\right)M^{v}_{\tau}\right].
 \end{equation*}
 \end{lemma}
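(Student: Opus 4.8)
The plan is to mimic the proof of \cref{lem:frechet_derivative_for_functional_not_depending_on_perturbation}, the only new feature being that the integrand $(M^{u'}_\tau)^2$ itself depends on the base point. The device that handles this dependence is the linearity of $v\mapsto M^v$ in \eqref{eq:linearity_of_change_of_drift_to_CLMG}: since $M^{u+v}=M^u+M^v$, we have the exact algebraic identity $(M^{u+v}_\tau)^2=(M^u_\tau)^2+2M^u_\tau M^v_\tau+(M^v_\tau)^2$. Applying the change of measure \eqref{eq:change_of_measure} with base point $u$ and perturbation $v$ gives
\begin{equation*}
\bb{E}^{u+v}[(M^{u+v}_\tau)^2]=\bb{E}^{u}\left[\left((M^u_\tau)^2+2M^u_\tau M^v_\tau+(M^v_\tau)^2\right)\mathcal{E}(M^v)_\tau\right],
\end{equation*}
so the whole problem reduces to extracting the linear-in-$v$ part of the right-hand side and showing the remainder is $o(\Vrt{v}_V)$.

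Next I would split $\mathcal{E}(M^v)_\tau$ according to the term it multiplies. Against $(M^u_\tau)^2$ I write $\mathcal{E}(M^v)_\tau=1+M^v_\tau+(\mathcal{E}(M^v)_\tau-1-M^v_\tau)$, which contributes the constant term $\bb{E}^u[(M^u_\tau)^2]$, the linear term $\bb{E}^u[(M^u_\tau)^2 M^v_\tau]$, and a remainder bounded via Cauchy--Schwarz by $\Vrt{(M^u_\tau)^2}_{L^2(\mu^{u})}\Vrt{\mathcal{E}(M^v)_\tau-1-M^v_\tau}_{L^2(\mu^{u})}$, which is $o(\Vrt{v}_V)$ by \eqref{eq:convergence_lemma_for_change_of_measure_minus_one_minus_CLMG}. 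Against $2M^u_\tau M^v_\tau$ I write $\mathcal{E}(M^v)_\tau=1+(\mathcal{E}(M^v)_\tau-1)$, giving the linear term $2\bb{E}^u[M^u_\tau M^v_\tau]$ plus a remainder that, by H\"{o}lder with exponents $(3,3,3)$, is at most $\Vrt{M^u_\tau}_{L^3(\mu^{u})}\Vrt{M^v_\tau}_{L^3(\mu^{u})}\Vrt{\mathcal{E}(M^v)_\tau-1}_{L^3(\mu^{u})}$; here $\Vrt{M^v_\tau}_{L^3(\mu^{u})}=O(\Vrt{v}_V)$ by \eqref{eq:Lr_bound_clmg} and $\Vrt{\mathcal{E}(M^v)_\tau-1}_{L^3(\mu^{u})}\to 0$ by \eqref{eq:convergence_lemma_for_change_of_measure_minus_one} with $r=0$, so after dividing by $\Vrt{v}_V$ this term vanishes. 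Finally the term $\bb{E}^u[(M^v_\tau)^2\mathcal{E}(M^v)_\tau]$ is purely higher order: Cauchy--Schwarz bounds it by $\Vrt{M^v_\tau}_{L^4(\mu^{u})}^2\Vrt{\mathcal{E}(M^v)_\tau}_{L^2(\mu^{u})}=O(\Vrt{v}_V^2)$, since $\Vrt{\mathcal{E}(M^v)_\tau}_{L^2(\mu^{u})}$ stays bounded as $\Vrt{v}_V\to 0$. Collecting the linear terms yields $\bb{E}^u[((M^u_\tau)^2+2M^u_\tau)M^v_\tau]$, as claimed.

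It remains to check that this candidate derivative is a bounded linear map. Linearity is immediate from the linearity of $v\mapsto M^v_\tau$ in \eqref{eq:clmg}. Boundedness follows from Cauchy--Schwarz and \eqref{eq:Lr_bound_clmg}: $\abs{\bb{E}^u[((M^u_\tau)^2+2M^u_\tau)M^v_\tau]}\leq \Vrt{(M^u_\tau)^2+2M^u_\tau}_{L^2(\mu^{u})}\Vrt{M^v_\tau}_{L^2(\mu^{u})}\leq C\Vrt{v}_V$, where the first factor is finite because $M^u_\tau\in L^4(\mu^{u})$ -- this is exactly where \cref{asmp:ball_of_admissible_changes_of_drift}, through $\tau\in L^p(\mu^{u})$ for all $p\geq 1$, is used.

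I expect the only genuinely delicate point to be the bookkeeping of H\"{o}lder exponents so that every factor lands in an $L^p(\mu^{u})$ space controlled by the available estimates: each remainder term couples a factor of order $\Vrt{v}_V$ (an $M^v_\tau$ norm) with a factor that vanishes as $v\to0$ (a Dol\'{e}ans-remainder norm), and one must choose exponents so that the first factor is integrable to high enough order while keeping the conjugate exponents $\geq 2$ as required by \eqref{eq:Lr_bound_clmg} and $\geq 1$ as required by \cref{lem:convergence_lemmas_for_change_of_measure}. Because all moments of $\tau$ are finite under \cref{asmp:ball_of_admissible_changes_of_drift}, there is always enough room, so no real obstruction arises; the argument is structurally identical to \cref{lem:frechet_derivative_for_functional_not_depending_on_perturbation}, with the single new ingredient being the algebraic splitting of $(M^{u+v}_\tau)^2$ via \eqref{eq:linearity_of_change_of_drift_to_CLMG}.
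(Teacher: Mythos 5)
Your proof is correct and follows essentially the same route as the paper's: the paper factors the argument through the slightly more general \cref{lem:frechet_derivative_for_KLD_mu_u_bar_mu_0_precursor} (with an arbitrary placeholder random variable $Z$ in place of your implicit $Z=1$, needed later for the induction to higher-order derivatives) and then specialises, but that precursor is proved using exactly your decomposition $(M^{u+v}_\tau)^2=(M^u_\tau)^2+2M^u_\tau M^v_\tau+(M^v_\tau)^2$, the same three-way splitting of $\mathcal{E}(M^v)_\tau$ against each term, and the same appeals to \eqref{eq:convergence_lemma_for_change_of_measure_minus_one_minus_CLMG}, \eqref{eq:convergence_lemma_for_change_of_measure_minus_one}, and the $L^2$-boundedness of $\mathcal{E}(M^v)_\tau$. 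The only cosmetic difference is your choice of H\"{o}lder exponents $(3,3,3)$ for the cross term where the paper pairs an $L^2$ norm of the product $M^u_\tau M^w_\tau Z$ against an $L^2$ norm of $\mathcal{E}(M^w)_\tau-1$.
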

 The main difference between \cref{lem:frechet_derivative_for_KLD_mu_u_bar_mu_0_clmg_form} and the corresponding result \cref{lem:frechet_derivative_for_functional_not_depending_on_perturbation} is that the \emph{sum} of the path functional $(M^u_\tau)^2$ and its Fr\'{e}chet derivative $2M^u_\tau$ must be weighted by the martingale term $M^v_\tau$. This indicates the rule of thumb for computing Fr\'{e}chet derivatives of expected values of functionals: one must first apply the Fr\'{e}chet derivative to the functional itself, and then multiply the sum of the original functional with its Fr\'{e}chet derivative by the martingale term associated to the direction $v$.
 
 \Cref{lem:frechet_derivative_for_KLD_mu_u_bar_mu_0_clmg_form} is the base case for the proof by induction of the following result. 
 \begin{lemma}
 \label{lem:mixed_second_order_frechet_derivative_for_KLD_mu_u_bar_mu_0_clmg_form}
 Suppose that \cref{asmp:uniform_lower_bound_on_smallest_singular_value_of_diffusion} and \cref{asmp:ball_of_admissible_changes_of_drift} hold. Let $u\in U$. For $n\in\bb{N}$, the $n$-th order Fr\'{e}chet derivative of $U\ni u'\mapsto \bb{E}^{u'}[(M^{u'}_\tau)^2]$ at $u$ is given by 
\begin{equation*}
 \times_{k=1}^{n}V\ni (v_1,\ldots,v_n)\mapsto \bb{E}^{u}\left[ \left(\left(M^u_\tau\right)^2+2n M^u_\tau +n(n-1)\right)\prod_{k=1}^{n}M^{v_k}_\tau\right].
\end{equation*}
\end{lemma}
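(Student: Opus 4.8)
The plan is to argue by induction on $n$, mirroring the proof of \cref{lem:mixed_second_order_frechet_derivative_for_functional_not_depending_on_perturbation}, with the base case $n=1$ supplied by \cref{lem:frechet_derivative_for_KLD_mu_u_bar_mu_0_clmg_form}. Fixing arbitrary placeholder directions $v_1,\ldots,v_{n+1}\in V$ and writing
\[
  G_n(u'):=\bb{E}^{u'}\!\left[\left((M^{u'}_\tau)^2+2nM^{u'}_\tau+n(n-1)\right)\prod_{k=1}^{n}M^{v_k}_\tau\right],
\]
I would show that $G_n$ is Fr\'echet differentiable at $u$ with derivative in direction $v_{n+1}$ equal to the claimed $(n+1)$-linear form; as in the proof of \cref{lem:mixed_second_order_frechet_derivative_for_functional_not_depending_on_perturbation}, it suffices to differentiate the $n$-linear form on fixed placeholders (cf. \cref{rem:placeholder}), the estimates below being uniform over bounded placeholders. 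The crucial structural point, absent in \cref{lem:mixed_second_order_frechet_derivative_for_functional_not_depending_on_perturbation}, is that the integrand of $G_n$ itself depends on the base point through $(M^{u'}_\tau)^2+2nM^{u'}_\tau$. Both this parametric dependence and the change of the law $\mu^{u'}$ are handled at once: I would apply the reweighting identity \eqref{eq:change_of_measure} to pass from $\bb{E}^{u+v_{n+1}}$ to $\bb{E}^{u}$ at the cost of the factor $\mathcal{E}(M^{v_{n+1}})_\tau$, and use the linearity \eqref{eq:linearity_of_change_of_drift_to_CLMG} to replace $M^{u+v_{n+1}}_\tau$ by $M^u_\tau+M^{v_{n+1}}_\tau$.

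Writing $m:=M^{v_{n+1}}_\tau$ and $P:=\prod_{k=1}^{n}M^{v_k}_\tau$, these two steps turn the integrand of $G_n(u+v_{n+1})$ into $\left((M^u_\tau)^2+2nM^u_\tau+n(n-1)+(2M^u_\tau+2n)m+m^2\right)P\,\mathcal{E}(M^{v_{n+1}})_\tau$. I would then substitute $\mathcal{E}(M^{v_{n+1}})_\tau=1+m+R$ with $R:=\mathcal{E}(M^{v_{n+1}})_\tau-1-m$, and collect the terms of first order in $v_{n+1}$. The zeroth-order term reproduces $G_n(u)$, and the first-order term is $\bb{E}^u[(A+C)\,m\,P]$ with $A:=(M^u_\tau)^2+2nM^u_\tau+n(n-1)$ and $C:=2M^u_\tau+2n$. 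The one genuinely delicate place is the bookkeeping of the constants: the coefficient of $M^u_\tau$ in $A+C$ becomes $2n+2=2(n+1)$ and the constant term becomes $n(n-1)+2n=(n+1)n$, so that $A+C=(M^u_\tau)^2+2(n+1)M^u_\tau+(n+1)n$ and $mP=\prod_{k=1}^{n+1}M^{v_k}_\tau$, giving exactly the claimed $(n+1)$-linear form.

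It remains to show that $G_n(u+v_{n+1})-G_n(u)-\bb{E}^u[(A+C)mP]$ is $o(\Vrt{v_{n+1}}_V)$. Expanding the product leaves six remainder terms, each carrying a factor $R$ or a factor $m^2$. The three terms containing $R$ are $o(\Vrt{v_{n+1}}_V)$ by \eqref{eq:convergence_lemma_for_change_of_measure_minus_one_minus_CLMG}, while the three terms with a factor $m^2$ and no $R$ are $O(\Vrt{v_{n+1}}_V^2)$ by \eqref{eq:Lr_bound_clmg}; in each case I would isolate the fixed factors $A$, $C$, and $P$ by H\"older's inequality. These factors lie in every $L^p(\mu^u)$: under \cref{asmp:ball_of_admissible_changes_of_drift} the stopping time $\tau$ has all moments, whence \eqref{eq:Lr_bound_clmg} gives $M^u_\tau,M^{v_k}_\tau\in L^p(\mu^u)$ for all $p\geq 2$. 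Dividing by $\Vrt{v_{n+1}}_V$ and letting $\Vrt{v_{n+1}}_V\to 0$ then annihilates all six terms, establishing the limit relation.

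Finally, the $(n+1)$-linearity of the limiting form is immediate from the linearity of $v\mapsto M^v_\tau$ in \eqref{eq:clmg}, since the prefactor $(M^u_\tau)^2+2(n+1)M^u_\tau+(n+1)n$ does not involve the placeholders, and its boundedness follows from the same H\"older and \eqref{eq:Lr_bound_clmg} estimates. I expect the main obstacle to be not any single estimate but the combined bookkeeping: one must propagate the base-point dependence through $M^{u+v_{n+1}}_\tau=M^u_\tau+M^{v_{n+1}}_\tau$, check that the algebra collapses the coefficients precisely to $2(n+1)$ and $(n+1)n$, and simultaneously maintain enough integrability to license every application of H\"older's inequality.
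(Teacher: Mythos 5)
Your proposal is correct and follows essentially the same route as the paper: induction on $n$ with base case \cref{lem:frechet_derivative_for_KLD_mu_u_bar_mu_0_clmg_form}, the change of measure \eqref{eq:change_of_measure} combined with $M^{u+v_{n+1}}_\tau=M^u_\tau+M^{v_{n+1}}_\tau$, the expansion $\mathcal{E}(M^{v_{n+1}})_\tau=1+M^{v_{n+1}}_\tau+R$ controlled by \eqref{eq:convergence_lemma_for_change_of_measure_minus_one_minus_CLMG} and \eqref{eq:Lr_bound_clmg}, and the same coefficient bookkeeping $2n+2=2(n+1)$, $n(n-1)+2n=(n+1)n$. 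The only difference is organisational: the paper modularises the inductive step through the placeholder lemmas \cref{lem:frechet_derivative_for_product_of_Mu_clmg_with_Z}, \cref{lem:frechet_derivative_for_KLD_mu_u_bar_mu_0_precursor}, and \cref{lem:frechet_derivative_for_functional_not_depending_on_perturbation} applied with $Z=\prod_{k=1}^{n}M^{v_k}_\tau$ and then sums the three derivatives, whereas you carry out the equivalent expansion in a single computation.
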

Note that the same rule of thumb applies above: the $n(n-1)$ terms arise from taking Fr\'{e}chet derivative of $2M^u_\tau$. See \cref{lem:frechet_derivative_for_product_of_Mu_clmg_with_Z} in \cref{sec:proofs}.

\section{Convexity}
\label{sec:convexity}

In this section, we use the second Fr\'{e}chet derivatives of the maps $u\mapsto \bb{E}^{u}[\phi_\tau]$ and $u\mapsto \bb{E}^{u}[(M^u_\tau)^2]$ identified in \cref{sec:derivatives} to describe the convexity or nonconvexity of these maps. Recall that Fr\'{e}chet differentiability implies Gateaux differentiability, and that if the Fr\'{e}chet derivative exists, then it coincides with the Gateaux derivative. Recall the Banach space version of the second derivative test for convexity, as stated in \cite[Corollary 3.8.6]{NiculescuPersson2018}, for example.
 \begin{lemma}
  \label{lem:second_derivative_test_of_convexity}
  Let $V$ be a real Banach space and $U\subset V$ be convex and open. Suppose that $f:U\to \bb{R}$ is twice Gateaux differentiable with second-order Gateaux derivative $f''$. 
  \begin{enumerate}
   \item[(a)] $f$ is convex on $U$ if and only if for all $u\in U$ and $v\in V$ it holds that
   \begin{equation*}
    f''(u;v,v)\geq 0.
   \end{equation*}
   \item[(b)] If the above inequality is strict for every $v\in V\setminus\{0\}$, then $f$ is strictly convex.
  \end{enumerate}
 \end{lemma}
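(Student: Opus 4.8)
The plan is to reduce this Banach-space statement to the classical one-variable second derivative test by restricting $f$ to line segments. The starting point is the elementary fact that a function on a convex set $U$ is convex (resp.\ strictly convex) if and only if its restriction to every segment (resp.\ every nondegenerate segment) contained in $U$ is convex (resp.\ strictly convex) as a function of the affine parameter; this is essentially just the definition of (strict) convexity. So I would fix $u_0,u_1\in U$, write $v:=u_1-u_0$ and $u_t:=u_0+tv$, and study the scalar function $g(t):=f(u_t)$. Because $U$ is open and convex, the segment can be enlarged slightly so that $u_t\in U$ for $t$ in an open interval containing $[0,1]$, which will matter for differentiability.

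Next I would compute the derivatives of $g$ directly from the definition of the Gateaux derivative. Since $g(t+h)-g(t)=f(u_t+hv)-f(u_t)$, dividing by $h$ and letting $h\to 0$ gives $g'(t)=f'(u_t;v)$, and differentiating once more in the same fixed direction $v$ gives $g''(t)=f''(u_t;v,v)$. The crucial simplification is that everything happens along the single direction $v$, so this ``chain rule'' is nothing more than unwinding the definition of the iterated directional derivative; in particular I never need symmetry or bilinearity of $f''$ in its two direction slots, only its diagonal value $f''(\cdot\,;v,v)$. Twice Gateaux differentiability of $f$ guarantees that $g$ is twice differentiable on the relevant interval, so the one-variable convexity criterion applies.

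With this identification in hand, the conclusions follow from the classical one-variable theory. For part (a): if $f''(u;v,v)\geq 0$ for all $u\in U$ and $v\in V$, then $g''\geq 0$ on every segment, so each $g$ is convex and hence $f$ is convex. Conversely, if $f$ is convex, fix $u\in U$ and $v\in V$; by openness there is $\epsilon>0$ with $u+sv\in U$ for all $s$ in an open interval containing $[0,\epsilon]$, and setting $g(s):=f(u+sv)$ gives a twice-differentiable convex function with $g''(s)=f''(u+sv;v,v)$, so $g''\geq 0$ and in particular $f''(u;v,v)=g''(0)\geq 0$. For part (b): if $f''(u;v,v)>0$ for every $u\in U$ and every $v\neq 0$, then along any nondegenerate segment one has $v=u_1-u_0\neq 0$, so $g''>0$ throughout the interval, whence $g$ is strictly convex; as this holds on every nondegenerate segment, $f$ is strictly convex.

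I expect no genuine conceptual obstacle; the care needed is purely in the bookkeeping. Specifically, I must ensure that the enlarged segment stays inside the open set $U$ so that $g''$ exists on a full interval and not merely at a point, since the one-variable criterion ``$g''\geq 0\Leftrightarrow g$ convex'' requires $g''$ on an interval. The other point to flag, which explains why part (b) is only a one-directional implication, is that the one-variable fact ``$g''>0\Rightarrow g$ strictly convex'' is sufficient but not necessary; hence strict convexity of $f$ need not force strict positivity of $f''$, and the lemma correctly asserts only the implication and not its converse.
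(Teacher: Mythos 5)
The paper does not actually prove this lemma: it is quoted as a known result from \cite[Corollary 3.8.6]{NiculescuPersson2018}, so there is no in-paper argument to compare against. Your proof --- reduction to the one-variable second derivative test along segments, using that $g(t):=f(u_0+tv)$ satisfies $g''(t)=f''(u_0+tv;v,v)$ by unwinding the definition of the iterated directional derivative --- is the standard and correct proof of that corollary, and it is consistent with the reduction principle the paper itself invokes in its introduction (convexity on a Banach space is equivalent to convexity of all one-dimensional restrictions). The two points you flag, namely enlarging each segment slightly inside the open set $U$ so that $g''$ exists on a full interval, and the fact that part (b) is only a sufficient condition because $g''>0$ is not necessary for strict convexity in one variable, are exactly the right ones to be careful about.
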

 We will also use the fact that convexity of a map is invariant under translations. We record this fact in the following lemma.
 
 \begin{lemma}
  \label{lem:translation_invariance_convex_maps}
  Let $V$ be a real Banach space and $U\subset V$ be convex. If $f:U\to\bb{R}$ is convex (respectively, strictly convex), then for every constant $c\in\bb{R}$, $(f+c):U\to\bb{R}$ is convex (resp. strictly convex).
 \end{lemma}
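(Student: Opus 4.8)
The plan is to argue directly from the definition of convexity rather than invoking the second derivative test of \cref{lem:second_derivative_test_of_convexity}, since $f$ is not assumed to be differentiable here. Recall that $f:U\to\bb{R}$ is convex if and only if for all $x,y\in U$ and all $\lambda\in[0,1]$ one has $f(\lambda x+(1-\lambda)y)\leq \lambda f(x)+(1-\lambda)f(y)$, with strict inequality whenever $x\neq y$ and $\lambda\in(0,1)$ in the strictly convex case. The single algebraic fact that drives everything is the partition-of-unity identity $\lambda+(1-\lambda)=1$, which lets the additive constant $c$ be distributed as $c=\lambda c+(1-\lambda)c$.

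First I would fix $x,y\in U$ and $\lambda\in[0,1]$ and compute $(f+c)(\lambda x+(1-\lambda)y)=f(\lambda x+(1-\lambda)y)+c$. Applying convexity of $f$ to the first summand and rewriting $c$ as $\lambda c+(1-\lambda)c$ yields
\begin{equation*}
(f+c)(\lambda x+(1-\lambda)y)\leq \lambda f(x)+(1-\lambda)f(y)+\lambda c+(1-\lambda)c=\lambda (f+c)(x)+(1-\lambda)(f+c)(y),
\end{equation*}
which is precisely the convexity inequality for $f+c$.

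For the strictly convex case I would repeat the identical computation, now taking $x\neq y$ and $\lambda\in(0,1)$. The only change is that the bound supplied by the strict convexity of $f$ is strict; since adding the constant $\lambda c+(1-\lambda)c$ to both sides preserves a strict inequality, the resulting estimate for $f+c$ is also strict, establishing strict convexity of $f+c$.

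There is no genuine obstacle here: the statement is elementary and recorded only for later reference, so that the cost-functional analysis in \cref{sec:application_stochastic_optimal_control} may absorb additive constants without affecting convexity. The only point requiring the slightest care is that the constant must be split using the same convex weights $\lambda$ and $1-\lambda$ rather than added naively, and this is immediate from $\lambda+(1-\lambda)=1$.
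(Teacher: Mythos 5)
Your proof is correct and is essentially the argument the paper uses: the paper simply cites the facts that constant functions are convex and that the sum of a (strictly) convex function with a convex function is (strictly) convex, and your direct computation with $c=\lambda c+(1-\lambda)c$ is exactly the verification underlying those cited facts. Nothing is missing.
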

 \begin{proof}
  The statement for convexity follows from the fact that the sum of two convex functions is convex and from the fact that constant functions are convex. The statement for strict convexity follows from the fact that if at least one of two convex functions is strictly convex, then the sum of these functions is strictly convex. See \cite[Remark 3.1.6 and Proposition 1.1.10]{NiculescuPersson2018}. 
 \end{proof}

Next, we state the following assumptions. The first is a uniform ellipticity condition.
 \begin{assumption}
\label{asmp:uniform_ellipticity}
 The diffusion coefficient $f$ in \eqref{eq:sde} takes values in $\bb{R}^{d\times d}$, is invertible, and admits a constant $0<\alpha<\infty$ such that 
 \begin{equation*}
   y^\top (ff^\top)^{-1}(t,w_\bullet) y\leq \alpha^2 \abs{y}_2^2,\quad \forall y\in \bb{R}^d,\ w\in\mathbf{W},\ t\geq 0.
 \end{equation*}
\end{assumption}
Since \cref{asmp:uniform_ellipticity} involves the inverse $(ff^\top)^{-1}$ and not the pseudoinverse $(ff^\top)^+$ of $ff^\top$, it is stronger than  \cref{asmp:uniform_lower_bound_on_smallest_singular_value_of_diffusion}.

In addition, we make the following assumption on the predictable process $\phi$ and the stopping time $\tau$ that we introduced at the end of \cref{sec:setup}.
\begin{assumption}
 \label{asmp:uniform_lower_bound_on_path_functional}
  The predictable process $\phi$ and the stopping time $\tau$ admit a constant $c\in\bb{R}$ such that $\phi_{\tau(w)}(w)\geq c$ for every $w\in\mathbf{W}$.
\end{assumption}
\Cref{asmp:uniform_lower_bound_on_path_functional} is satisfied whenever the predictable function $\phi$ is bounded from below by $c$, for example. In \cref{sec:application_stochastic_optimal_control} we show a setting where \cref{asmp:uniform_lower_bound_on_path_functional} is satisfied.
 \begin{proposition}
 \label{prop:convexity_of_functional_not_depending_on_perturbation}
  Suppose \cref{asmp:ball_of_admissible_changes_of_drift} holds. Suppose that for every $u \in U$, $\phi_\tau\in L^2(\mu^{u})$ and is $\mu^{u}$-almost surely nonconstant. If both \cref{asmp:uniform_ellipticity} and \cref{asmp:uniform_lower_bound_on_path_functional} hold, then $u\mapsto \bb{E}^{u}[\phi_\tau]$ is strictly convex on $U$. 
 \end{proposition}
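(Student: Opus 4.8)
The plan is to apply the second-derivative test for convexity, \cref{lem:second_derivative_test_of_convexity}, to the map $g\colon u'\mapsto\bb{E}^{u'}[\phi_\tau]$. Since Fr\'echet differentiability implies Gateaux differentiability with the same derivative, and since \cref{asmp:uniform_ellipticity} is stronger than \cref{asmp:uniform_lower_bound_on_smallest_singular_value_of_diffusion}, the hypotheses of \cref{lem:mixed_second_order_frechet_derivative_for_functional_not_depending_on_perturbation} are in force, so that the relevant second Gateaux derivative of $g$ at $u$ is $\bb{E}^{u}[\phi_\tau(M^v_\tau)^2]$. The difficulty with applying the test directly is that $\phi_\tau$ is only bounded below by $c$, which may be negative, so this quantity need not be nonnegative. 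To fix the sign I would first reduce to the case $c=0$ by translation: set $\widetilde{\phi}:=\phi-c$, so that $\widetilde{\phi}_\tau\geq 0$ by \cref{asmp:uniform_lower_bound_on_path_functional}, and observe that $\bb{E}^{u'}[\phi_\tau]=\bb{E}^{u'}[\widetilde{\phi}_\tau]+c$ for every $u'$, since each $\mu^{u'}$ is a probability measure. By \cref{lem:translation_invariance_convex_maps}, $g$ is strictly convex on $U$ if and only if $\widetilde{g}\colon u'\mapsto\bb{E}^{u'}[\widetilde{\phi}_\tau]$ is.

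Next I would verify that \cref{lem:mixed_second_order_frechet_derivative_for_functional_not_depending_on_perturbation} applies to $\widetilde{g}$: since $\phi_\tau\in L^2(\mu^{u})$ and $c$ is constant, $\widetilde{\phi}_\tau\in L^2(\mu^{u})$; and since $\phi_\tau$ is $\mu^{u}$-almost surely nonconstant, so is $\widetilde{\phi}_\tau$. Hence the second Gateaux derivative of $\widetilde{g}$ at $u$ in direction $(v,v)$ equals $\bb{E}^{u}[\widetilde{\phi}_\tau(M^v_\tau)^2]$. Because $\widetilde{\phi}_\tau\geq 0$ and $(M^v_\tau)^2\geq 0$, this is nonnegative for every $v\in V$, which already yields convexity of $\widetilde{g}$, and hence of $g$, via part (a) of \cref{lem:second_derivative_test_of_convexity}. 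If $U$ is not open I would run this step on $\inter{U}$ and recover convexity on $U$ by continuity of $\widetilde{g}$ along line segments.

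For strict convexity it then remains to show that $\bb{E}^{u}[\widetilde{\phi}_\tau(M^v_\tau)^2]>0$ for every $u\in U$ and every $v\in V\setminus\{0\}$, and this is where I expect the main obstacle to lie. Since the integrand is nonnegative, the expectation vanishes precisely when $\widetilde{\phi}_\tau(M^v_\tau)^2=0$ holds $\mu^{u}$-almost surely, equivalently when $M^v_\tau=0$ holds $\mu^{u}$-almost surely on the event $\{\widetilde{\phi}_\tau>0\}$, which has positive $\mu^{u}$-measure because $\widetilde{\phi}_\tau\geq 0$ is nonconstant. The task is to rule this out, and this is exactly the point at which \cref{asmp:uniform_ellipticity}, rather than the weaker \cref{asmp:uniform_lower_bound_on_smallest_singular_value_of_diffusion}, is used: invertibility of $f$ makes $(ff^\top)^{-1}$ positive definite, so by \eqref{eq:quadratic_covariation_process} the integrand $v^\top(ff^\top)^{-1}v$ appearing in $\Ang{M^v,M^v}_\tau=\int_0^\tau v^\top(ff^\top)^{-1}v\,\ud s$ is strictly positive wherever $v$ does not vanish, whereas under the pseudoinverse it could vanish for a nonzero $v$.

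The delicate step, which I would isolate as a separate lemma, is to convert this pointwise nondegeneracy of the quadratic variation into nonvanishing of $M^v_\tau$ itself on the positive-measure set $\{\widetilde{\phi}_\tau>0\}$. The easier half, namely $\bb{E}^{u}[(M^v_\tau)^2]=\bb{E}^{u}[\Ang{M^v,M^v}_\tau]>0$ via the It\^o isometry and the strict positivity of the integrand, shows that $M^v_\tau$ is genuinely nondegenerate; the harder half is the localisation to $\{\widetilde{\phi}_\tau>0\}$, which requires that the controlled diffusion actually explores the region of space-time on which $v$ is nonzero rather than concentrating on $\{M^v_\tau=0\}$. Once that nondegeneracy claim is established, $\bb{E}^{u}[\widetilde{\phi}_\tau(M^v_\tau)^2]>0$ for all $v\in V\setminus\{0\}$, and part (b) of \cref{lem:second_derivative_test_of_convexity} delivers the strict convexity of $\widetilde{g}$, hence of $g$.
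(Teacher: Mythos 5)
Your overall strategy (second-derivative test, translation to fix the sign of the lower bound, uniform ellipticity to make $M^v_\tau$ nondegenerate) is the same as the paper's, but your execution of the translation step creates a gap that you then explicitly leave open. By subtracting exactly $c$ you only obtain $\widetilde{\phi}_\tau\geq 0$, so strict positivity of $\bb{E}^{u}[\widetilde{\phi}_\tau(M^v_\tau)^2]$ forces you to rule out the scenario in which $M^v_\tau$ vanishes $\mu^{u}$-a.s.\ on the set $\{\widetilde{\phi}_\tau>0\}$. You correctly identify this localisation as ``the delicate step'' and ``the harder half'', but you never prove it, and it is not clear how you would: it requires a support-type statement about where the stopped stochastic integral $M^v_\tau$ can be nonzero relative to the level sets of $\phi_\tau$, which is strictly harder than anything the hypotheses of the proposition provide. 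As written, the proof is therefore incomplete at precisely the point where strict (as opposed to plain) convexity is established.

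The fix — and the route the paper takes — is to translate by \emph{more} than $-c$. Choose $c'>0$ so that $\hat{\phi}_\tau:=\phi_\tau+c'$ is bounded below by a strictly positive constant $\delta>0$ (e.g.\ $c'=1-c$, $\delta=1$). Then
\begin{equation*}
\bb{E}^{u}\bigl[\hat{\phi}_\tau(M^v_\tau)^2\bigr]\;\geq\;\delta\,\bb{E}^{u}\bigl[(M^v_\tau)^2\bigr]\;=\;\delta\,\bb{E}^{u}\bigl[\Ang{M^v,M^v}_\tau\bigr]\;>\;0
\end{equation*}
for $v\neq 0$, using the It\^{o} isometry and \cref{asmp:uniform_ellipticity} via \eqref{eq:quadratic_covariation_process} — exactly the ``easier half'' that you already have. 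No localisation to $\{\widetilde{\phi}_\tau>0\}$ is needed, part (b) of \cref{lem:second_derivative_test_of_convexity} gives strict convexity of $u\mapsto\bb{E}^{u}[\hat{\phi}_\tau]$, and \cref{lem:translation_invariance_convex_maps} transfers it back to $u\mapsto\bb{E}^{u}[\phi_\tau]$. (Note also that $\hat{\phi}_\tau$ remains in $L^2(\mu^{u})$ and $\mu^{u}$-a.s.\ nonconstant, so \cref{lem:mixed_second_order_frechet_derivative_for_functional_not_depending_on_perturbation} still applies, just as you checked for $\widetilde{\phi}_\tau$.)
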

\begin{proof}[Proof of \cref{prop:convexity_of_functional_not_depending_on_perturbation}]
Since $\phi_\tau\in L^2(\mu^{u})$ and $\phi_\tau$ is $\mu^{u}$-almost surely nonconstant for every $u\in U$, then we may apply \cref{lem:mixed_second_order_frechet_derivative_for_functional_not_depending_on_perturbation} to conclude that the second Fr\'{e}chet derivative of $u\mapsto \bb{E}^{u}[\phi_\tau]$ at $u\in U$ satisfies $(v,v)\mapsto \bb{E}^{u} [\phi_\tau (M^v_\tau)^2]$. 

First assume that $c>0$ in \cref{asmp:uniform_lower_bound_on_path_functional}. It follows that for $v\in V\setminus\{0\}$,
\begin{equation*}
\bb{E}^{u} [\phi_\tau (M^v_\tau)^2]\geq c\cdot\bb{E}^{u}[(M^v_\tau)^2]=c\cdot\bb{E}^{u}[\Ang{M^v,M^v}_\tau]>0, 
\end{equation*}
where we used \cref{asmp:uniform_lower_bound_on_path_functional} in the first inequality and the It\^{o} isometry in the equation. For the strict inequality, we use the formula for $\Ang{M^v,M^v}$ in \eqref{eq:quadratic_covariation_process} and the uniform ellipticity of $f$ in \cref{asmp:uniform_ellipticity} to conclude that $\mu^{u}(\Ang{M^v,M^v}_\tau>0)=1$ for every $u\in U$, and hence $\bb{E}^{u}[\Ang{M^v,M^v}_\tau]>0$. By part (b) of \cref{lem:second_derivative_test_of_convexity}, the conclusion follows.

Now suppose that the constant in \cref{asmp:uniform_lower_bound_on_path_functional} satisfies $c\leq 0$. Then there exists some $c'>0$ such that $\hat{\phi}_\tau:=\phi_\tau+c'$ is uniformly bounded from below on $\mathbf{W}$ by a strictly positive number. By the argument from the preceding paragraph, the map $u\mapsto \bb{E}^{u}[\hat{\phi}_\tau]$ is strictly convex. Thus, $u\mapsto \bb{E}^{u}[\phi_\tau]$ is the sum of the strictly convex function $u\mapsto \bb{E}^{u}[\hat{\phi}_\tau]$ with the constant $-c'$. It follows from the translation invariance of convexity (\cref{lem:translation_invariance_convex_maps}) that $u\mapsto \bb{E}^{u}[\phi_\tau]$ is strictly convex.
\end{proof}

\begin{remark}
 If in \cref{prop:convexity_of_functional_not_depending_on_perturbation} we remove the assumption that $\phi_\tau$ is $\mu^{u}$-almost surely nonconstant for every $u\in U$, then there may exist a subset $U'$ of $U$ for which $u\mapsto \bb{E}^{u}[\phi_\tau]$ is constant on $U'$, and hence is only convex but not strictly convex. Thus, the assumption that $\phi_\tau$ is $\mu^{u}$-almost surely nonconstant for every $u\in U$ is necessary for strict convexity of $u\mapsto \bb{E}^{u}[\phi_\tau]$ on $U$. 
 
 If we use \cref{asmp:uniform_lower_bound_on_smallest_singular_value_of_diffusion} instead of \cref{asmp:uniform_ellipticity}, then the diffusion coefficient $f$ in \eqref{eq:sde_perturbed} may be singular. If the image of $\bb{R}_{\geq 0}\times \mathbf{W}$ under $v\in V$ is a subset of the nullspace of $f$, then $f^+v=0$ on $\bb{R}_{\geq 0}\times\mathbf{W}$. By the formula \eqref{eq:clmg} this implies that $\mu^{u}(\forall t\geq 0,\ M^v_t=0)=1$, and hence $\mu^{u}(\Ang{M^v,M^v}_\tau=0)=1$. In this case $u\mapsto \bb{E}^{u}[\phi_\tau]$ will not be strictly convex.
\end{remark}

Next, we consider the map $u\mapsto \bb{E}^{u}[(M^u_\tau)^2]$. By setting $w=v$ in \cref{lem:mixed_second_order_frechet_derivative_for_KLD_mu_u_bar_mu_0_clmg_form}, it follows that the second Fr\'{e}chet derivative of this map evaluated at $u\in U$ along $(v,v)\in V\times V$ equals
\begin{equation}
 \label{eq:second_order_frechet_derivative_for_KLD_mu_u_bar_mu_0_covariation_form}
 \bb{E}^{u}\left[(M^v_\tau)^2\left((M^u_\tau)^2 +4 M^u_\tau+2\right)\right]=:\bb{E}^{u}\left[ (M^v_\tau)^2 p(M^u_\tau)\right],
\end{equation} 
where we define $p(x):=x^2+4x+2$. Since $p(\cdot)$ attains its minimum value of $-2$ at $x^\ast=-2$, it follows that
\begin{equation}
\label{eq:second_order_frechet_derivative_for_KLD_mu_u_bar_mu_0_covariation_form_lower_bound}
 \bb{E}^{u}\left[(M^v_\tau)^2\left((M^u_\tau)^2 +4 M^u_\tau+2\right)\right]\geq -2\bb{E}^{u}[(M^v_\tau)^2].
\end{equation}
By the equivalent condition for convexity in statement (a) of \cref{lem:second_derivative_test_of_convexity}, the bound \eqref{eq:second_order_frechet_derivative_for_KLD_mu_u_bar_mu_0_covariation_form_lower_bound} suggests that the map $u\mapsto \bb{E}^{u}[(M^u_\tau)^2]$ may be nonconvex at some $u\in U$. The following example confirms this. 
\begin{example}
 \label{exa:change_of_drift_to_KLD_map_nonconvex}
  Consider the SDE \eqref{eq:sde_perturbed} in $\bb{R}^d$ with $d=1$, constant drift $g= 0$, constant diffusion $f=1$, and deterministic initial condition $X_0=0$. \Cref{asmp:uniform_ellipticity} holds with $\alpha=1$, because $f=1$. 
  
  Let $V$ be as in \eqref{eq:V_vector_space_of_changes_of_drift}, and let $U\subset V$ be the set of all constant $\bb{R}$-valued functions on $\bb{R}_{\geq 0}\times\mathbf{W}$. Then for every $u\in U$,
 \begin{equation*}
  X^u_t =(u-1)t+B_t,\quad \forall t\geq 0,
 \end{equation*}
 so that for every $u\in U$, $X^u$ is a standard Brownian motion under $\bb{P}$ with drift. From \eqref{eq:clmg} and the SDE above it follows that  $ M^u_t(X^u,B)=uB_t=u(X^u_t-(u-1)t)$ for every $t\geq 0$. Setting $u=1$ yields
 \begin{equation}
 \label{eq:special_choice_u}
 M^u_t(X^u,B)=B_t=X^u_t,\quad\forall t\geq 0.
 \end{equation}
 Now let $b>0$ be arbitrary, and let
 \begin{equation*}
  \tau:\mathbf{W}\to\bb{R}_{\geq 0},\quad  w\mapsto \tau(w):=\inf\{t>0\ :\ w_t\notin (-2,b)\}
 \end{equation*}
 be the first exit time from the interval $(-2,b)$. Using the Markov property, the fact that $B_0=0$, and $b>0$, it follows that
 \begin{equation*}
  \bb{P}(\tau(B)>t)=\bb{P}(B_s\in (-2,b)\text{ for all }s\in [0,t])\leq \left(\max_{x\in (-2,b)}\bb{P}(x+B_1\in (-2,b))\right)^k,\quad t\geq k\in\bb{N},
 \end{equation*}
 see e.g. the proof of \cite[Theorem 2.49]{MortersPeres2010}. Let $\beta:=-\log \max_{x\in (-2,b)}\bb{P}(x+B_1\in (-2,b))$, and note that $0<\beta<\infty$. By the bound above, there exists $C>0$ such that 
 \begin{equation*}
  \bb{P}(\tau(B)>t)\leq C\exp(-\beta t),\quad t>0.
 \end{equation*}
 By the tail probability formula and the inequality above,
 \begin{equation*}
  \bb{E}_{\bb{P}}[\exp(\lambda_u \tau(B))]=\int_0^\infty \lambda_u \exp(\lambda_u t)\bb{P}(\tau(B)>t)\ud t\leq C\lambda_u\int_0^\infty \exp(-(\beta-\lambda_u)t)\ud t,
 \end{equation*}
 which is finite whenever $\beta-\lambda_u>0$. Since $U$ is convex and contains 0, this proves that \cref{asmp:ball_of_admissible_changes_of_drift} is satisfied.

 By definition of $\tau$, it follows that 
 \begin{equation}
 \label{eq:distribution_first_exit_location}
  \bb{P}(B_{\tau(B)}=-2)=\tfrac{b}{2+b},\quad \bb{P}(B_{\tau(B)}=b)=\tfrac{2}{2+b},
 \end{equation}
 see e.g. \cite[Theorem 2.49]{MortersPeres2010}. Setting $v=u$ in \eqref{eq:second_order_frechet_derivative_for_KLD_mu_u_bar_mu_0_covariation_form}, and using \eqref{eq:special_choice_u} and \eqref{eq:distribution_first_exit_location}, we obtain
 \begin{align*}
  \bb{E}^{u}\left[(M^u_\tau)^2\left((M^u_\tau)^2+4M^u_\tau+2\right)\right]
  =&\bb{E}_{\bb{P}}\left[B_\tau^4+4 B_\tau^3+2B_\tau^2\right]
  \\
  =&\tfrac{b}{2+b}(16-32+8)+\tfrac{2}{2+b}(b^4+4b^3+2b^2)
  \\
  =&\tfrac{2}{2+b}\left(b^4+4b^3+2b^2-4b\right).
 \end{align*}
 Since the polynomial inside the parentheses factorises as $b(b+2)(b^2+2b-2)$, it follows that
 \begin{equation*}
   \bb{E}^{u}\left[(M^u_\tau)^2\left((M^u_\tau)^2+4M^u_\tau+2\right)\right]=2b(b^2+2b-2)=:q(b).
 \end{equation*}
The roots of the polynomial $q(\cdot)$ are $b=-1-\sqrt{3}$, and $b=0$, $b=\sqrt{3}-1$. Over the interval $(-1-\sqrt{3},0)$, $q(\cdot)$ is strictly positive, and over the interval $(0,\sqrt{3}-1)$, $q(\cdot)$ is strictly negative. The local minimum of $q(\cdot)$ over $(0,\sqrt{3}-1)$ is attained at $b^\ast:=\tfrac{1}{3}(\sqrt{7}-1)$, and $q(b^\ast)\approx -1.26$. This proves that whenever the right endpoint $b$ in the definition of the first exit time belongs to the interval $(0,\sqrt{3}-1)$, then the left-hand side of \eqref{eq:second_order_frechet_derivative_for_KLD_mu_u_bar_mu_0_covariation_form} is strictly negative. By statement (a) of \cref{lem:second_derivative_test_of_convexity}, it follows that the map $u\mapsto \bb{E}^{u}[(M^u_\tau)^2]$ is nonconvex at $u=1$. 
\end{example}

\begin{remark}
 In \cref{exa:change_of_drift_to_KLD_map_nonconvex}, the bound in \eqref{eq:second_order_frechet_derivative_for_KLD_mu_u_bar_mu_0_covariation_form_lower_bound} is not attained, even when we choose the optimal $b^\ast$. This is to be expected, since the bound in \eqref{eq:second_order_frechet_derivative_for_KLD_mu_u_bar_mu_0_covariation_form_lower_bound} is attained if and only if $\mu^{u}(p(M^u_\tau)=-2)=1$. One can ensure the latter statement holds true by defining $\tau$ as the first passage time to the minimiser $x^\ast=-2$ of the polynomial $p(\cdot)$ in \eqref{eq:second_order_frechet_derivative_for_KLD_mu_u_bar_mu_0_covariation_form}. However, since the first passage time of Brownian motion is not integrable, \cref{asmp:ball_of_admissible_changes_of_drift} will not hold in this case.
\end{remark}

\begin{remark}
 \label{rem:interpretation_of_exa_change_of_drift_to_KLD_map_nonconvex}
Given a measurable space $(\Omega',\mathcal{F}')$ and the set $\mathcal{P}(\Omega')$ of probability measures on $(\Omega',\mathcal{F}')$, it is known that the map $\mathcal{P}(\Omega')\times\mathcal{P}(\Omega')\ni (\mu,\nu)\mapsto \KLD{\mu}{\nu}$ is convex, see e.g. \cite[Theorem 2.7.2]{CoverThomas_1991}. The significance of \cref{exa:change_of_drift_to_KLD_map_nonconvex} with respect to the convexity of the map $(\mu,\nu)\mapsto \KLD{\mu}{\nu}$ is that the \emph{composition} of the map $u\mapsto (\mu^{u},\mu^{0})\in\mathcal{P}(\mathbf{W}\times\mathbf{W})\times\mathcal{P}(\mathbf{W}\times\mathbf{W})$ with the convex map $(\mu^{u},\mu^{0})\mapsto \KLD{\mu^{u}}{\mu^{0}}$ is not in general convex.  
\end{remark}

Let $\lambda>0$, and define the map 
 \begin{equation}
 \label{eq:regularised_KLD_functional_Phi}
\Phi:U\to\bb{R},\quad  u\mapsto \bb{E}^{u}[\phi_\tau +\lambda (M^u_\tau)^2].
 \end{equation}
 The following theorem characterises the smoothness and convexity properties of $\Phi(\cdot)$.
\begin{theorem}
 \label{thm:strict_convexity_of_regularised_KLD}
 Let $\Phi:U\to\bb{R}$ be defined by \eqref{eq:regularised_KLD_functional_Phi}. 
 \begin{enumerate}
  \item[(i)] If \cref{asmp:uniform_lower_bound_on_smallest_singular_value_of_diffusion} and \cref{asmp:ball_of_admissible_changes_of_drift} hold, and if for every $u\in U$ it holds that $\phi_\tau\in L^2(\mu^{u})$, then $\Phi$ is twice Fr\'{e}chet differentiable on $U$. The first Fr\'{e}chet derivative at $u\in U$ is given by
  \begin{align}
  \label{eq:first_Frechet_derivative_regularised_KLD_functional_Phi}
  V\ni v\mapsto  D\Phi(u)(v)=&\bb{E}^{u}\left[ \left(\phi_\tau+\lambda\left((M^u_\tau)^2+ 2M^u_\tau\right)\right) M^v_\tau \right]
 \end{align}
  If in addition $\phi_\tau$ is not $\mu^{u}$-almost surely constant, then the second Fr\'{e}chet derivative at $u\in U$ is given by
  \begin{equation}
  \label{eq:second_Frechet_derivative_regularised_KLD_functional_Phi}
  \begin{aligned}
   V\times V \ni (v,w)\mapsto D^2\Phi(u)(v,w)= &\bb{E}^{u}\left[ \left(\phi_\tau+\lambda\left((M^u_\tau)^2+4M^u_\tau+2\right)\right)M^w_\tau M^v_\tau \right]
   \end{aligned}
  \end{equation}
  \item[(ii)] If in addition \cref{asmp:uniform_ellipticity} and \cref{asmp:uniform_lower_bound_on_path_functional} hold, then $\Phi$ is also strictly convex, and there exists at most one $u^\ast\in U$ such that the first Fr\'{e}chet derivative of $\Phi$ vanishes at $u^\ast$. If $u^\ast\in U$, then $u^\ast$ is the unique element of $U$ such that
  \begin{equation*}
   \forall v\in V,\quad   \textup{Cov}^{u^\ast}\left[\phi_\tau+\lambda\left(\left(M^{u^\ast}_\tau)^2+2M^{u^\ast}_\tau\right)\right),M^v_\tau\right]=0.
  \end{equation*}  
 \end{enumerate}
\end{theorem}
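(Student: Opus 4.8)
The plan is to prove Theorem~\ref{thm:strict_convexity_of_regularised_KLD} in two stages corresponding to its two parts, reusing the Fréchet derivative formulas already established for the two constituent maps $u\mapsto\bb{E}^{u}[\phi_\tau]$ and $u\mapsto\bb{E}^{u}[(M^u_\tau)^2]$. Since $\Phi(u)=\bb{E}^{u}[\phi_\tau]+\lambda\,\bb{E}^{u}[(M^u_\tau)^2]$ is a linear combination of these two maps, and the Fréchet derivative is a linear operator, the derivative formulas \eqref{eq:first_Frechet_derivative_regularised_KLD_functional_Phi} and \eqref{eq:second_Frechet_derivative_regularised_KLD_functional_Phi} should follow by simply adding the corresponding formulas.

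**Proof proposal.**
For part~(i), I would first invoke \cref{lem:frechet_derivative_for_functional_not_depending_on_perturbation} to obtain the first Fréchet derivative of $u\mapsto\bb{E}^{u}[\phi_\tau]$, namely $v\mapsto\bb{E}^{u}[\phi_\tau M^v_\tau]$, and \cref{lem:frechet_derivative_for_KLD_mu_u_bar_mu_0_clmg_form} to obtain the first Fréchet derivative of $u\mapsto\bb{E}^{u}[(M^u_\tau)^2]$, namely $v\mapsto\bb{E}^{u}[((M^u_\tau)^2+2M^u_\tau)M^v_\tau]$. Adding these with weights $1$ and $\lambda$ and using linearity of the Fréchet derivative yields \eqref{eq:first_Frechet_derivative_regularised_KLD_functional_Phi}. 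For the second derivative, I would apply \cref{lem:mixed_second_order_frechet_derivative_for_functional_not_depending_on_perturbation} with $n=2$ (giving $(v,w)\mapsto\bb{E}^{u}[\phi_\tau M^v_\tau M^w_\tau]$, where the nonconstancy hypothesis on $\phi_\tau$ is needed) and \cref{lem:mixed_second_order_frechet_derivative_for_KLD_mu_u_bar_mu_0_clmg_form} with $n=2$ (giving $(v,w)\mapsto\bb{E}^{u}[((M^u_\tau)^2+4M^u_\tau+2)M^v_\tau M^w_\tau]$), then combine with weights $1$ and $\lambda$ to obtain \eqref{eq:second_Frechet_derivative_regularised_KLD_functional_Phi}. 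Twice Fréchet differentiability is exactly the assertion that these derivatives exist as the bounded multilinear maps identified in the cited lemmas.

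For part~(ii), I would establish strict convexity via the second-derivative test, \cref{lem:second_derivative_test_of_convexity}(b). Setting $w=v$ in \eqref{eq:second_Frechet_derivative_regularised_KLD_functional_Phi} gives
\begin{equation*}
D^2\Phi(u)(v,v)=\bb{E}^{u}\left[\left(\phi_\tau+\lambda\left((M^u_\tau)^2+4M^u_\tau+2\right)\right)(M^v_\tau)^2\right].
\end{equation*}
Here I observe that the map $u\mapsto\bb{E}^{u}[\phi_\tau]$ is strictly convex by \cref{prop:convexity_of_functional_not_depending_on_perturbation} (whose hypotheses, \cref{asmp:uniform_ellipticity} and \cref{asmp:uniform_lower_bound_on_path_functional}, are now in force), while the $\lambda$-weighted regularisation term may fail to be convex on its own, as \cref{exa:change_of_drift_to_KLD_map_nonconvex} shows. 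The key quantitative point is that the polynomial $p(x)=x^2+4x+2$ is bounded below by $-2$, so the $\lambda$-term contributes at least $-2\lambda\,\bb{E}^{u}[(M^v_\tau)^2]$, while the $\phi_\tau$ term contributes $\bb{E}^{u}[\phi_\tau(M^v_\tau)^2]\geq c\,\bb{E}^{u}[(M^v_\tau)^2]$ by \cref{asmp:uniform_lower_bound_on_path_functional}. The cleanest route is to treat $\phi_\tau$ as I did in the proof of \cref{prop:convexity_of_functional_not_depending_on_perturbation}, using the strict positivity of $\bb{E}^{u}[(M^v_\tau)^2]=\bb{E}^{u}[\Ang{M^v,M^v}_\tau]>0$ for $v\neq0$ (which holds under uniform ellipticity). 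Since the sum of a strictly convex function and a (merely) convex function is strictly convex (\cref{lem:translation_invariance_convex_maps} argument), it suffices to show the regularisation term is at least convex; alternatively, one directly verifies $D^2\Phi(u)(v,v)>0$. Finally, strict convexity yields at most one critical point $u^\ast$, and since the Fréchet derivative coincides with the Gateaux derivative, the stationarity condition $D\Phi(u^\ast)(v)=0$ for all $v$ can be rewritten in covariance form by noting that $\bb{E}^{u^\ast}[M^v_\tau]=0$ (as $M^v$ is a mean-zero martingale under $\mu^{u^\ast}$), so subtracting the vanishing product of means converts the expectation of the product into the covariance.

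**The main obstacle** will be handling the interaction between the possibly nonconvex regularisation term and the strictly convex $\phi_\tau$ term: I must argue that the $\phi_\tau$ contribution strictly dominates the negative part $-2\lambda$ of the polynomial $p$ \emph{for every} $u\in U$ and every $v\neq0$. The delicate point is that \cref{prop:convexity_of_functional_not_depending_on_perturbation} guarantees strict convexity of the $\phi_\tau$-map but not a uniform lower bound on $D^2(\bb{E}^{u}[\phi_\tau])(v,v)$ relative to $\bb{E}^{u}[(M^v_\tau)^2]$; I expect the resolution is that one does not need a uniform bound, because strict convexity of the $\phi_\tau$-map plus convexity (not necessarily strict) of the $\lambda$-regularised map already forces strict convexity of the sum via \cref{lem:translation_invariance_convex_maps}. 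Verifying that the regularisation term $u\mapsto\lambda\,\bb{E}^{u}[(M^u_\tau)^2]$ is convex under the hypotheses of part~(ii)—or else folding its possible nonconvexity into the strict convexity margin of $\phi_\tau$—is the step requiring the most care, and I would resolve it by exploiting \cref{asmp:uniform_lower_bound_on_path_functional} together with the explicit lower bound $p(x)\geq-2$ to reduce to the strictly convex case treated in \cref{prop:convexity_of_functional_not_depending_on_perturbation}.
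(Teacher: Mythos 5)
Your part (i) is exactly the paper's argument: linearity of the Fr\'{e}chet derivative applied to \cref{lem:frechet_derivative_for_functional_not_depending_on_perturbation}, \cref{lem:frechet_derivative_for_KLD_mu_u_bar_mu_0_clmg_form}, \cref{lem:mixed_second_order_frechet_derivative_for_functional_not_depending_on_perturbation}, and \cref{lem:mixed_second_order_frechet_derivative_for_KLD_mu_u_bar_mu_0_clmg_form}. Your closing covariance reformulation via $\bb{E}^{u^\ast}[M^v_\tau]=0$ also matches the paper. The pointwise bound $D^2\Phi(u)(v,v)\geq(c-2\lambda)\,\bb{E}^{u}[(M^v_\tau)^2]$ combined with $\bb{E}^{u}[(M^v_\tau)^2]=\bb{E}^{u}[\Ang{M^v,M^v}_\tau]>0$ for $v\neq 0$ under uniform ellipticity is precisely how the paper settles the case $c-2\lambda>0$.

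The gap is in the case $c\leq 2\lambda$, which the theorem must cover since no relation between $c$ and $\lambda$ is assumed. Your proposed fallback --- ``it suffices to show the regularisation term is at least convex'', so that the sum of the strictly convex $\phi_\tau$-term and a convex term is strictly convex --- cannot work: \cref{exa:change_of_drift_to_KLD_map_nonconvex} shows that $u\mapsto\bb{E}^{u}[(M^u_\tau)^2]$ is not convex in general, and none of the hypotheses of part (ii) restores its convexity. Likewise, strict convexity of $u\mapsto\bb{E}^{u}[\phi_\tau]$ from \cref{prop:convexity_of_functional_not_depending_on_perturbation} gives no quantitative margin with which to absorb a nonconvex summand. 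The mechanism you gesture at but never state is the translation trick: replace $\phi$ by $\hat{\phi}:=\phi-(c-3\lambda)$, whose lower bound $3\lambda$ satisfies $3\lambda-2\lambda>0$; apply the first case to $\widehat{\Phi}(u)=\bb{E}^{u}[\hat{\phi}_\tau+\lambda(M^u_\tau)^2]=\Phi(u)-(c-3\lambda)$; and conclude by \cref{lem:translation_invariance_convex_maps}, since $\Phi$ and $\widehat{\Phi}$ differ by a constant. The essential point is that the shift must be chosen so that the \emph{shifted} lower bound exceeds $2\lambda$ --- not merely so that it is positive, as in the proof of \cref{prop:convexity_of_functional_not_depending_on_perturbation} --- because it is the $-2\lambda$ coming from $\min_x(x^2+4x+2)=-2$ that has to be absorbed by the $\phi_\tau$-term in \eqref{eq:second_Frechet_derivative_regularised_KLD_functional_Phi}.
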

\begin{proof}[Proof of \cref{thm:strict_convexity_of_regularised_KLD}]
For statement (i), \eqref{eq:first_Frechet_derivative_regularised_KLD_functional_Phi} follows from the fact that the Fr\'{e}chet derivative is a linear operator and from using \cref{lem:frechet_derivative_for_functional_not_depending_on_perturbation} and \cref{lem:frechet_derivative_for_KLD_mu_u_bar_mu_0_clmg_form}. Similarly, \eqref{eq:second_Frechet_derivative_regularised_KLD_functional_Phi} follows from using  \cref{lem:mixed_second_order_frechet_derivative_for_functional_not_depending_on_perturbation} and \cref{lem:mixed_second_order_frechet_derivative_for_KLD_mu_u_bar_mu_0_clmg_form}. 

To prove statement (ii), assume first that the lower bound $c$ of $\phi$ as stated in \cref{asmp:uniform_lower_bound_on_path_functional} satisfies $c-2\lambda>0$. Then by \eqref{eq:second_Frechet_derivative_regularised_KLD_functional_Phi} and \eqref{eq:second_order_frechet_derivative_for_KLD_mu_u_bar_mu_0_covariation_form_lower_bound}, the second Fr\'{e}chet derivative of $\Phi(\cdot)$ at $u$ along the directions $(v,v)\in V\times V$ satisfies
 \begin{equation*}
D^2 \Phi(u)(v,v)=\bb{E}^{u}\left[\left(\phi_\tau  +\lambda\left(((M^{u}_\tau)^2+4M^{u}_{\tau}+2\right)\right)(M^{v}_\tau)^2\right]\geq  (c-2\lambda)\bb{E}^{u}[(M^v_\tau)^2].
 \end{equation*}
 Using \cref{asmp:uniform_lower_bound_on_path_functional} with constant $c$ such that $c-2\lambda>0$, it follows from the inequality that for all $v\neq 0$, $D^2\Phi(u)(v,v)>0$. Hence, by part (b) of \cref{lem:second_derivative_test_of_convexity}, $\Phi:U\to\bb{R}$ is strictly convex.

 Now suppose that the lower bound $c$ of $\phi$ in \cref{asmp:uniform_lower_bound_on_path_functional} satisfies $c-2\lambda \leq 0$. Then $\hat{\phi}:=\phi-(c-3\lambda)$ satisfies \cref{asmp:uniform_lower_bound_on_path_functional} with constant lower bound $\lambda>0$. Next, define $\widehat{\Phi}:U\to\bb{R}$ by $\widehat{\Phi}(u):=\bb{E}^{u}[\hat{\phi}_\tau+\lambda (M^u_\tau)^2]=\Phi(u)-(c-3\lambda)$. Since $\hat{\phi}$ satisfies \cref{asmp:uniform_lower_bound_on_path_functional} with strictly positive constant lower bound, we may apply the argument in the preceding paragraph to prove that $\widehat{\Phi}(\cdot)$ is strictly convex. Since $\Phi(\cdot)$ is the translation of $\widehat{\Phi}(\cdot)$ by a constant, it follows from  \cref{lem:translation_invariance_convex_maps} that $\Phi(\cdot)$ is strictly convex. The statement involving the covariance follows from using \eqref{eq:first_Frechet_derivative_regularised_KLD_functional_Phi}, the definition of the covariance, and the fact that $\bb{E}^{u}[M^v_\tau]=0$ for every $v\in V$, by \eqref{eq:clmg}.
\end{proof}

\section{Application to first exit stochastic optimal control problems}
\label{sec:application_stochastic_optimal_control}

In this section, we consider the case of stochastic optimal control problems defined by the first exit of a diffusion process from an open bounded domain $D\subset \bb{R}^{d}$. Let $g:\bb{R}_{\geq 0}\times D\to\bb{R}^{d}$ and $f:\bb{R}_{\geq 0}\times D\to\bb{R}^{d\times d}$ be such that there exists a weak solution to \eqref{eq:sde} that is unique in law, as in \cref{asmp:exists_soln_to_sde_unique_in_law}. Define the stopping time $\tau$ to be the first exit time from $D$,
\begin{equation}
 \label{eq:first_exit_time}
 \tau(w):=\inf\left\{t\geq 0\ \middle\vert\ w_t\notin D\right\}.
\end{equation}
Let $k_\run:\bb{R}_{\geq 0}\times D\to \bb{R}$ and $k_\term:\bb{R}_{\geq 0}\times \partial D\to\bb{R}$. We shall refer to these as the `running cost' and `terminal cost' function respectively. Let $\phi:\bb{R}_{\geq 0}\times \mathbf{W}\to\bb{R}$ be a predictable function defined by
\begin{equation}
 \label{eq:path_functional}
 \phi_t(w):=\int_0^{t}k_\run(s,w_s)\ud s +k_\term(t,w_t), \quad (t,w)\in\bb{R}_{\geq 0}\times\mathbf{W}.
\end{equation}
Let $\lambda>0$. The first exit stochastic optimal control problem is defined by
\begin{equation}
\begin{aligned}
\label{eq:first_exit_control_problem}
\min_{u\in U}\Phi(u;\lambda)= &\bb{E}\left[\phi_{\tau(X^{u})}(X^{u})+\lambda \cdot\tfrac{1}{2}\Ang{M^u,M^u}_{\tau(X^{u})}(X^u)\right]
\\
\text{ such that }\ud X^{u}_t=&(u+g)(t,X^{u}_t)\ud t+f(t,X^{u}_t)\ud B_t,\quad X^{u}_0\in D.
\end{aligned}
\end{equation}
In \eqref{eq:first_exit_control_problem}, the `control' function $u$ is the change of drift. The space $U$ of `admissible controls' is a subset of $L^\infty(\bb{R}_{\geq 0}\times D;\bb{R}^d)$ that satisfies \cref{asmp:ball_of_admissible_changes_of_drift}. 

 The objective $\Phi(u;\lambda)$ is the same as \eqref{eq:regularised_KLD_functional_Phi}, up to the scaling by $\tfrac{1}{2}>0$. Using the formula $\tfrac{1}{2}\bb{E}^{u}[\Ang{M^u,M^u}_\tau]=\KLD{\mu^{u}}{\mu^{0}}$ in \eqref{eq:KLD}, it follows that the second term in the objective functional of \eqref{eq:first_exit_control_problem} plays the role of an entropic regularisation term, and the parameter $\lambda>0$ quantifies the strength of the regularisation: for larger values of $\lambda$, the relative entropy plays a more important role in the objective functional $\Phi(\cdot;\lambda)$ than $\phi_\tau$. \Cref{exa:change_of_drift_to_KLD_map_nonconvex} shows that the entropic regularisation term is not in general a convex function of $u$. However, we have the following result.
\begin{corollary}
 \label{cor:strict_convexity_of_first_exit_control_objective_functional}
 Suppose that \cref{asmp:ball_of_admissible_changes_of_drift} and \cref{asmp:uniform_ellipticity} hold. If $k_\run(\cdot,\cdot)$ is nonnegative and $k_\term(\cdot,\cdot)$ is bounded from below on $\bb{R}_{\geq 0}\times D$, then for every $\lambda>0$, the objective functional $\Phi(\cdot~;\lambda):U\to\bb{R}$ of the stochastic optimal control problem \eqref{eq:first_exit_control_problem} is twice Fr\'{e}chet differentiable and strictly convex. Hence, \eqref{eq:first_exit_control_problem} has a unique solution.
\end{corollary}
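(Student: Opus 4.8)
The plan is to recognise the objective functional in \eqref{eq:first_exit_control_problem} as an instance of the map $\Phi$ in \eqref{eq:regularised_KLD_functional_Phi} and then invoke \cref{thm:strict_convexity_of_regularised_KLD}. By the It\^{o} isometry $\bb{E}^{u}[\Ang{M^u,M^u}_\tau]=\bb{E}^{u}[(M^u_\tau)^2]$, the objective equals $\bb{E}^{u}[\phi_\tau+\tfrac{\lambda}{2}(M^u_\tau)^2]$, which is precisely \eqref{eq:regularised_KLD_functional_Phi} with the regularisation parameter $\tfrac{\lambda}{2}>0$ in place of $\lambda$ and with $\phi$ the running-plus-terminal cost functional \eqref{eq:path_functional}. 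Since $\tfrac{\lambda}{2}>0$ whenever $\lambda>0$, the entire result reduces to verifying that the hypotheses of \cref{thm:strict_convexity_of_regularised_KLD} hold for this choice of $\phi$ and parameter.

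Verification proceeds as follows. \Cref{asmp:uniform_ellipticity} is assumed, and since it is stronger than \cref{asmp:uniform_lower_bound_on_smallest_singular_value_of_diffusion}, the latter holds too; \cref{asmp:ball_of_admissible_changes_of_drift} is assumed directly, so $\tau\in L^p(\mu^{u})$ for every $p\geq 1$ and every $u\in U$. For \cref{asmp:uniform_lower_bound_on_path_functional}: since $k_\run\geq 0$ the integral term in \eqref{eq:path_functional} is nonnegative, and since $k_\term$ is bounded below by some $c_\term\in\bb{R}$, we obtain $\phi_{\tau(w)}(w)\geq c_\term$ for every $w\in\mathbf{W}$, which is \cref{asmp:uniform_lower_bound_on_path_functional} with $c=c_\term$. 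It remains to check that $\phi_\tau\in L^2(\mu^{u})$ for every $u\in U$. Here I would use that the paths $X^u_s$ lie in the bounded set $\clos{D}$ for all $s\in[0,\tau]$, so that the cost functions are evaluated on a bounded set; bounding $\abs{\phi_\tau}$ by $K\tau+C$ for suitable constants $K,C$ and invoking $\tau\in L^2(\mu^{u})$ then yields $\phi_\tau\in L^2(\mu^{u})$. This is exactly where \cref{asmp:ball_of_admissible_changes_of_drift}, i.e. the exponential integrability of $\tau$, is essential.

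With the hypotheses in place, \cref{thm:strict_convexity_of_regularised_KLD}(i) gives that $\Phi(\cdot\,;\lambda)$ is twice Fr\'{e}chet differentiable on $U$, and \cref{thm:strict_convexity_of_regularised_KLD}(ii) gives strict convexity. The latter uses \cref{asmp:uniform_ellipticity} to guarantee $\bb{E}^{u}[(M^v_\tau)^2]=\bb{E}^{u}[\Ang{M^v,M^v}_\tau]>0$ for every $v\neq 0$ (since $f$ is invertible and, for the first exit time from the open set $D$, $\tau>0$ $\mu^{u}$-almost surely), together with the uniform lower bound from \cref{asmp:uniform_lower_bound_on_path_functional} and the constant-shift argument of \cref{lem:translation_invariance_convex_maps}. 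Strict convexity then yields that $\Phi(\cdot\,;\lambda)$ admits at most one minimiser on the convex set $U$, which is the uniqueness statement; the minimiser, when it exists, is the unique $u^\ast\in U$ annihilating the covariance in \cref{thm:strict_convexity_of_regularised_KLD}(ii).

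The main obstacle is the verification of $\phi_\tau\in L^2(\mu^{u})$: the stated hypotheses only control $k_\run$ and $k_\term$ from below, so to bound the second moment one must additionally exploit that the cost functions are evaluated along paths confined to the bounded domain $D$ up to time $\tau$ and that $\tau$ has finite moments of all orders. A secondary point that must be settled before the formula \eqref{eq:second_Frechet_derivative_regularised_KLD_functional_Phi}, and hence strict convexity, can be applied is that $\phi_\tau$ be $\mu^{u}$-almost surely nonconstant; this holds generically—for instance as soon as $k_\term$ is nonconstant on $\partial D$ or $k_\run$ is not identically zero—because under \cref{asmp:uniform_ellipticity} the exit location $X^u_\tau$ is genuinely random, but the degenerate constant case must be excluded, since \cref{exa:change_of_drift_to_KLD_map_nonconvex} shows that the regularisation term alone need not be convex.
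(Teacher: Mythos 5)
Your proposal is correct and takes essentially the same route as the paper: the paper's own proof is a two-line reduction that verifies \cref{asmp:uniform_lower_bound_on_path_functional} with $c=\inf k_\term$ (using $k_\run\geq 0$) and then cites \cref{thm:strict_convexity_of_regularised_KLD}. You are more careful than the paper on two points that its proof silently passes over: the requirement $\phi_\tau\in L^2(\mu^{u})$ and the requirement that $\phi_\tau$ be $\mu^{u}$-a.s.\ nonconstant, neither of which follows from the stated hypotheses alone (your sketch of the $L^2$ bound via $\abs{\phi_\tau}\leq K\tau+C$ implicitly needs $k_\run$ and $k_\term$ to be bounded \emph{above} on the relevant sets, e.g.\ by continuity on $\clos{D}$, which is an additional hypothesis). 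Flagging these is a genuine improvement over the paper's argument rather than a deviation from it.
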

\begin{proof}
 Since $k_\run(\cdot,\cdot)$ is nonnegative, the predictable function $\phi(\cdot,\cdot)$ defined in \eqref{eq:path_functional} satisfies\cref{asmp:uniform_lower_bound_on_path_functional} with lower bound given by $c=\inf\{k_\term(t,x)\ :\ (t,x)\in\bb{R}_{\geq 0}\times D\}$. Thus, the result follows from \cref{thm:strict_convexity_of_regularised_KLD}.
\end{proof}

We recall some main ideas from \cite{HartmannSchuette_2012}. First exit control problems of the form \eqref{eq:first_exit_control_problem} arise in the following way. Given $\tau(\cdot)$ and $\phi(\cdot)$ as defined in \eqref{eq:first_exit_time} and \eqref{eq:path_functional} respectively, define $F:\bb{R}_{>0}\to\bb{R}$ according to
\begin{equation*}
   F(\lambda):=-\lambda\log \bb{E}^{0}\left[\exp(-\lambda^{-1}\phi_\tau) \right].
\end{equation*}
The cumulant generating function $F(\cdot)$ contains information about the concentration of the distribution of $\phi$ with respect to $\mu^{0}$ around its mean. In the context of statistical physics, if $\phi_{\tau(X^u)}(X^{u})$ is given the interpretation of the work done by the controlled process $X^{u}$ up to the first exit time of $X^u$ from the domain $D$, then the formula for $F(\lambda)$ is similar to the formula for the associated free energy. Recall that $\mu^{0}$ denotes the law of the solution to the SDE \eqref{eq:sde} without control, i.e. with $u\equiv 0$. If $u$ is such that $\mu^{0}$ is absolutely continuous with respect to the law $\mu^{u}$ of the solution $(X^u,B)$ to \eqref{eq:sde_perturbed}, then
\begin{align}
 F(\lambda)&=-\lambda\log\bb{E}[\exp(-\lambda^{-1}\phi_{\tau})\mathcal{E}(M^{-u})_{\tau}] 
\nonumber \\
 &\leq \bb{E}^{u}\left[\phi_{\tau}+\lambda M^{u}_\tau+ \lambda\cdot\tfrac{1}{2}\Ang{M^{-u},M^{-u}}_\tau\right]=\Phi(u;\lambda),
 \label{eq:derivation_of_objective_functional}
\end{align}
for $\Phi(u;\lambda)$ as defined in \eqref{eq:first_exit_control_problem}. Above, we used the fact that $M^{u}$ is a continuous local martingale with $M^{u}_0=0$ by \eqref{eq:clmg}. The inequality above expresses the cumulant generating function as the value function of the optimal control problem \eqref{eq:first_exit_control_problem}. There exist sufficient conditions involving the domain $D$, the coefficients $f,g$ in \eqref{eq:sde}, and the functions $\kappa_\run$ and $\kappa_\term$, such that the value function is the unique classical solution to the Hamilton-Jacobi-Bellman equation of the stochastic optimal control problem. However, this is a vast topic that we shall not investigate in this paper.

\subsection{Convergence of gradient descent-based method}
\label{ssec:convergence_of_gradient_descent_algorithm}

In \cite{HartmannSchuette_2012}, the authors proposed the following numerical method for solving \eqref{eq:first_exit_control_problem} using gradient descent, in the special case that the SDE in the constraint is given by 
\begin{equation}
\label{eq:sde_perturbed_special_case}
 \ud X^{u}_t=\left(u-\nabla V\right)(X^{u}_t)\ud t+\sqrt{2\epsilon}\ud B_t,\quad X^u_0=x\in D,
\end{equation}
where $V:D\to\bb{R}$ is a $C^1$ potential function and $\epsilon>0$. The SDE above is discretised using the Euler-Maruyama method. A finite collection $(b_k(\cdot))_{k=1}^{n}$ of linearly independent elements of the space $U$ of admissible controls is chosen, where for each $k=1,\ldots,n$, $b_k:D\to\bb{R}^{d}$. The stochastic optimal control problem \eqref{eq:first_exit_control_problem} is approximated using the finite-dimensional optimisation problem
\begin{equation}
\label{eq:finite_dimensional_optimisation_problem}
 \min_{a\in\bb{R}^{n}}\Phi (u^a;\lambda),\quad u^a(\cdot):=\sum_{i=1}^{n}a_ib_i(\cdot),
\end{equation}
subject to the constraint \eqref{eq:sde_perturbed_special_case}. An initial coefficient vector $a^{(0)}\in\bb{R}^{n}$ is chosen, and the coefficient vector is updated sequentially according to
\begin{equation}
\label{eq:gradient_descent_step}
 a^{(j)}=a^{(j-1)}+h^{(j-1)}\nabla_a \Phi \left(u^a;\lambda\right),\quad j=1,\ldots, J,
\end{equation}
where $(h^{(j)})_{j=0}^{J-1}$ are a sequence of strictly positive step sizes and $\nabla_a$ denotes the gradient with respect to the coefficient vector $a$. 

The following result is a corollary of \cref{thm:strict_convexity_of_regularised_KLD}(i). Below, $D_{a_k}$ denotes the partial derivative with respect to $a_k$, and $D_{a_k a_\ell}$ denotes the second partial derivative with respect to $a_k$ and $a_\ell$.
\begin{corollary}
 \label{cor:partial_derivatives_finite_dimensional_objective_function}
 Assume that  \cref{asmp:uniform_lower_bound_on_smallest_singular_value_of_diffusion} and \cref{asmp:ball_of_admissible_changes_of_drift} hold. Then the first- and second-order partial derivatives of the function $a\mapsto \Phi (u^a;\lambda)$ are given by
 \begin{align}
  D_{a_k} \Phi \left(u^a;\lambda\right)&= \bb{E}^{u^a}\left[ \left(\phi_\tau+\lambda\left(\tfrac{1}{2}\left(M^{u^a}_\tau\right)^2+M^{u^a}_{\tau}\right)\right)M^{b_k}_\tau\right]
\label{eq:continuous_version_partial_derivative_finite_dim_objective}
\\
  D_{a_k a_\ell} \Phi \left(u^a;\lambda\right)&=\bb{E}^{u^a}\left[\left(\phi_\tau+\lambda\left(\tfrac{1}{2}\left(M^{u^a}_\tau\right)^2+2M^{u^a}_{\tau}+1\right)\right) M^{b_k}_\tau M^{b_\ell}_\tau\right]
  \label{eq:continuous_version_second_partial_derivative_finite_dim_objective}
\end{align}
\end{corollary}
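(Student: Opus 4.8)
The plan is to regard the finite-dimensional function $a\mapsto\Phi(u^a;\lambda)$ as the composition of the \emph{linear} map $\mathbb{R}^n\ni a\mapsto u^a=\sum_{i=1}^n a_i b_i\in V$ with the map $u'\mapsto\Phi(u';\lambda)$, and then to apply the chain rule together with the Fr\'{e}chet derivative formulas already established in \cref{thm:strict_convexity_of_regularised_KLD}(i). The first step is to identify the objective in \eqref{eq:first_exit_control_problem} with the functional $\Phi$ of \eqref{eq:regularised_KLD_functional_Phi}. By the It\^{o} isometry used in \eqref{eq:KLD}, $\bb{E}^{u}[\tfrac{1}{2}\Ang{M^u,M^u}_\tau]=\tfrac{1}{2}\bb{E}^{u}[(M^u_\tau)^2]$, so that
\begin{equation*}
 \Phi(u;\lambda)=\bb{E}^{u}\left[\phi_\tau+\tfrac{\lambda}{2}(M^u_\tau)^2\right].
\end{equation*}
Thus $u\mapsto\Phi(u;\lambda)$ is exactly the map \eqref{eq:regularised_KLD_functional_Phi} with $\lambda$ replaced by $\lambda/2$. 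In particular, under \cref{asmp:uniform_lower_bound_on_smallest_singular_value_of_diffusion} and \cref{asmp:ball_of_admissible_changes_of_drift}, \cref{thm:strict_convexity_of_regularised_KLD}(i) applies and yields the first and second Fr\'{e}chet derivatives of $u\mapsto\Phi(u;\lambda)$ directly from \eqref{eq:first_Frechet_derivative_regularised_KLD_functional_Phi} and \eqref{eq:second_Frechet_derivative_regularised_KLD_functional_Phi}, after the substitution $\lambda\mapsto\lambda/2$.

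Concretely, I would record that this substitution turns the weight $\lambda((M^u_\tau)^2+2M^u_\tau)$ in \eqref{eq:first_Frechet_derivative_regularised_KLD_functional_Phi} into $\lambda(\tfrac{1}{2}(M^u_\tau)^2+M^u_\tau)$, and the weight $\lambda((M^u_\tau)^2+4M^u_\tau+2)$ in \eqref{eq:second_Frechet_derivative_regularised_KLD_functional_Phi} into $\lambda(\tfrac{1}{2}(M^u_\tau)^2+2M^u_\tau+1)$. Since the map $a\mapsto u^a$ is linear, its derivative at every point sends the $k$-th coordinate direction $e_k$ to $b_k\in V$; hence the partial derivative $D_{a_k}$ of the composition is the Gateaux (equivalently, Fr\'{e}chet) derivative of $\Phi(\cdot;\lambda)$ evaluated in the direction $b_k$, and $D_{a_ka_\ell}$ is the second Fr\'{e}chet derivative evaluated at $(b_k,b_\ell)$. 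Substituting $v=b_k$ (and $w=b_\ell$) into the two derivative formulas---so that $M^v_\tau=M^{b_k}_\tau$ and $M^w_\tau=M^{b_\ell}_\tau$ by the definition \eqref{eq:clmg}---then gives precisely \eqref{eq:continuous_version_partial_derivative_finite_dim_objective} and \eqref{eq:continuous_version_second_partial_derivative_finite_dim_objective}, the weights being expressed in terms of $M^{u^a}_\tau$ because the derivatives are taken at $u=u^a$.

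The only real subtlety is the passage from partial derivatives on $\mathbb{R}^n$ to the directional Fr\'{e}chet derivatives of $\Phi(\cdot;\lambda)$ on $V$. This is justified by the chain rule for Fr\'{e}chet-differentiable maps composed with bounded linear maps: Fr\'{e}chet differentiability of $\Phi(\cdot;\lambda)$ (guaranteed by \cref{thm:strict_convexity_of_regularised_KLD}(i)) transfers to the composition $a\mapsto\Phi(u^a;\lambda)$, whose ordinary partial derivatives then coincide with the stated directional derivatives. I expect no new estimates to be required beyond the hypotheses already invoked to apply \cref{thm:strict_convexity_of_regularised_KLD}(i)---namely $\phi_\tau\in L^2(\mu^{u^a})$ for each admissible $a$ (and, for the second-order formula, that $\phi_\tau$ is not $\mu^{u^a}$-almost surely constant)---so the bulk of the argument is the bookkeeping of the factor $\lambda/2$ and the identification $v=b_k$, $w=b_\ell$.
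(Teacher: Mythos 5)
Your proposal is correct and follows essentially the same route as the paper: the paper likewise observes that $u^{a+he_k}=u^a+hb_k$ by linearity of $a\mapsto u^a$, identifies $D_{a_k}\Phi(u^a;\lambda)$ with the Gateaux (hence Fr\'{e}chet) derivative of $\Phi(\cdot\,;\lambda)$ at $u^a$ in the direction $b_k$, and then reads off the formulas from \eqref{eq:first_Frechet_derivative_regularised_KLD_functional_Phi} and \eqref{eq:second_Frechet_derivative_regularised_KLD_functional_Phi} after replacing $\lambda$ by $\lambda/2$, $v$ by $b_k$, and $w$ by $b_\ell$. Your explicit invocation of the chain rule for compositions with bounded linear maps is just a more formal phrasing of the paper's direct difference-quotient computation.
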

\begin{proof}
 Let $(e_i)_{i=1}^{n}$ be the standard orthonormal basis of $\bb{R}^{n}$, and let $k\in\{1,\ldots,n\}$ be arbitrary. The definition of $u^a(\cdot)$ in  \eqref{eq:finite_dimensional_optimisation_problem} implies that $u^{a+he_k}(\cdot)=u^{a}(\cdot)+hb_k(\cdot)$. Hence, for the objective functional $\Phi(\cdot~;\lambda)$ defined in \eqref{eq:first_exit_control_problem},
\begin{equation*}
D_{a_k} \Phi \left(u^a;\lambda\right)=\lim_{h\to 0}\frac{ \Phi (u^{a+he_k};\lambda)-\Phi (u^a;\lambda)}{h}=\lim_{h\to 0}\frac{\Phi (u^a+h b_k;\lambda)-\Phi (u^a)}{h}.
\end{equation*}
The last limit is the Gateaux derivative of the objective functional $\Phi (\cdot~;\lambda)$ evaluated at $u^a$ in the direction $b_k$. Since the Fr\'{e}chet derivative exists, it coincides with the Gateaux derivative. Thus, \eqref{eq:continuous_version_partial_derivative_finite_dim_objective} and \eqref{eq:continuous_version_second_partial_derivative_finite_dim_objective} follow from \eqref{eq:first_Frechet_derivative_regularised_KLD_functional_Phi} and \eqref{eq:second_Frechet_derivative_regularised_KLD_functional_Phi}  in \cref{thm:strict_convexity_of_regularised_KLD}(i), after replacing $u$ with $u^a$, $\lambda$ with $\lambda\cdot\tfrac{1}{2}$, $v$ with $b_k$, and $w$ with $b_\ell$.
\end{proof}

We make some observations that illustrate the significance of the formula above -- and by extension, the results in \cref{sec:convexity} -- with respect to some open problems that were raised in \cite{HartmannSchuette_2012}.

\paragraph{Exact vs. inexact gradients.} In \cite[Eq. (A.4)]{HartmannSchuette_2012}, an expression is given for the $j$-th partial derivative of the finite-dimensional objective function $a\mapsto \Phi (u^a;\lambda)$, under the assumption that the stopping time $\tau$ in \eqref{eq:path_functional} is deterministic and finite. The value of this expression is the same as that of the Euler-Maruyama discretisation of \eqref{eq:continuous_version_partial_derivative_finite_dim_objective}, except that the expression \cite[Eq. (A.4)]{HartmannSchuette_2012} is given for a deterministic, finite stopping time. The authors propose an inexact gradient to take into account the random stopping time case, but conclude that ``it is unclear how [the partial derivatives of $\tau(X^{u})$] can be handled numerically efficiently''. The formula \eqref{eq:continuous_version_partial_derivative_finite_dim_objective} resolves this problem by showing that the expression given in \cite[Eq. (A.4)]{HartmannSchuette_2012} extends in the natural way to random stopping times, i.e. by replacing the discrete deterministic stopping time with the discrete random stopping time.

\paragraph{Convergence of the gradient descent method.} In \cite{HartmannSchuette_2012}, there is no convergence analysis of the gradient descent-based method. If the assumptions of \cref{cor:strict_convexity_of_first_exit_control_objective_functional} are satisfied, then any gradient-descent based method will converge to a unique minimiser of the finite-dimensional objective function $a\mapsto \Phi (u^a;\lambda)$ in \eqref{eq:finite_dimensional_optimisation_problem}, provided that expected values are evaluated exactly and the SDE is evaluated exactly. This is because the finite-dimensional objective function inherits the strict convexity property of the infinite-dimensional objective functional $\Phi (\cdot~;\lambda)$. In practice, the convergence will be masked by the statistical error due to the approximation of expected values, by the error due to the discretisation of the SDE, and by the error due to the gradient descent step in \eqref{eq:gradient_descent_step}. 

\paragraph{Deterministic limit of the finite-dimensional problem.} The method considered in \cite{HartmannSchuette_2012} is developed by first specifying a finite basis $(b_k(\cdot))_{k=1}^{n}$ of functions. Now suppose one is given a basis $(b_k)_{k\in\bb{N}}$ for the infinite-dimensional set $U$ of admissible controls, and computes for each $n\in\bb{N}$ the solution of the finite-dimensional problem \eqref{eq:finite_dimensional_optimisation_problem}. It is natural to ask whether the sequence of solutions converges to the solution of the infinite-dimensional problem \eqref{eq:first_exit_control_problem} as $n\to\infty$. The preceding analysis answers this question affirmatively, since we obtained the finite-dimensional problem by restricting the infinite-dimensional problem -- which has a unique solution, by \cref{cor:strict_convexity_of_first_exit_control_objective_functional} -- to the span of the first $n$ basis functions.

\subsection{Extensions to the gradient descent-based method}

\paragraph{Optimisation methods that use second derivatives.} In \cite{HartmannSchuette_2012}, the authors propose the gradient descent method \eqref{eq:gradient_descent_step} to solve the stochastic optimal control problem \eqref{eq:finite_dimensional_optimisation_problem}. One can also use methods that use second derivatives of the objective function, such as Newton's method for solving the finite-dimensional optimisation problem \eqref{eq:finite_dimensional_optimisation_problem}: for $j=1,\ldots,J$,
\begin{equation*}
 a^{(j)}=a^{(j-1)}+J(a^{(j-1)})^{-1} \nabla_a\Phi (u^a;\lambda)
\end{equation*}
where $a\mapsto J(a)\in\bb{R}^{n\times n}$ is the Hessian matrix of $a\mapsto \Phi (u^a;\lambda)$, with entries given by \eqref{eq:continuous_version_second_partial_derivative_finite_dim_objective}. The new vector $a^{(j)}$ may be obtained without inverting the matrix $J(a^{(j-1)})$, by solving 
\begin{equation*}
 J(a^{(j-1)})(a^{(j)}-a^{(j-1)})=\nabla_a\Phi (u^a;\lambda)
\end{equation*}
for $(a^{(j)}-a^{(j-1)})$ and using the value of the current vector $a^{(j-1)}$. The formulas \eqref{eq:continuous_version_partial_derivative_finite_dim_objective} and \eqref{eq:continuous_version_second_partial_derivative_finite_dim_objective} show that one can compute both the gradient and the Hessian of $a\mapsto \Phi (u^a;\lambda)$ using only $\phi_\tau$ and $(M^{b_k}_\tau)_{k=1}^{n}$, since $M^{u^a}_\tau=\sum_{k=1}^{n}a_k M^{b_k}_\tau$ by the linearity of the map $v\mapsto M^v_\tau$ in \eqref{eq:clmg} and the fact that $u^a(\cdot)$ is defined in \eqref{eq:finite_dimensional_optimisation_problem} as a linear combination of the $(b_k(\cdot))_{k=1}^{n}$. Thus, the Newton method can be implemented using the same path functionals that are needed to implement the gradient descent method. The same statement is valid for any numerical optimisation method that uses second-derivative information of the function $a\mapsto\Phi(u^a;\lambda)$.

Optimisation methods that use second derivative information about the objective may be useful in cases where the first exit of $X^u$ from $D$ is a rare event, i.e. when the first exit time $\tau(X^u)$ takes large values with high probability. In such cases, for any simulated trajectory of $X^u$, the discretisation of $\tau(X^u)$ will be given by $N_{\tau(\widetilde{X^u})}\Delta t$, where $\Delta t$ denotes the integration time step, $\widetilde{X^u}$ denotes the discrete approximation of $X^u$, and $N_{\tau(\widetilde{X^u})}\in\bb{N}$ is the discrete approximation of the exit time. In particular, given that the first exit of $X^u$ from $D$ is a rare event, $N_\tau(w)$ will be larger than the dimension $n$ of the finite-dimensional space in \eqref{eq:finite_dimensional_optimisation_problem}, with high probability. Since the cost of computing the value of a single path functional such as $M^{b_k}_{\tau}$ for a single realisation of $(X^{u}_t)_{0\leq t\leq \tau(X^{u})}$ grows linearly with $N_{\tau(\widetilde{X^u})}$, the cost of the additional operations of the optimisation method will be small in comparison to the cost of computing the path functionals themselves, whenever $N_{\tau(\widetilde{X^u})}$ is large. 

\paragraph{Control variates.} In practical situations, the expected values in $\Phi(u^a;\lambda)$ and its derivatives cannot be evaluated exactly, and must be approximated using a Monte Carlo method. It is then of interest to reduce the sample variance of the corresponding Monte Carlo estimates. 

In the present context, the method of control variates may be an effective method for variance reduction. Suppose one wishes to estimate an expected value $\bb{E}[\varphi(Y)]$, where $Y$ is a random variable and $\varphi(\cdot)$ is a $\bb{R}$-valued function. In addition, one knows the expected value $\bb{E}[\kappa(Y)]$ for the same random variable $Y$ but a different $\bb{R}$-valued function $\kappa(\cdot)$. The random variable $\kappa(Y)$ is the `control variate' or `control variable'. Then the control variate-based estimator $\varphi(Y)+\beta(\bb{E}[\kappa(Y)]-\kappa(Y))$ is an unbiased estimator of $\bb{E}[\varphi(Y)]$, with variance $\textup{Var}(\varphi(Y))+\beta^2 \textup{Var}(\kappa(Y))-2\beta \textup{Cov}(\varphi(Y),\kappa(Y))$. By differentiating with respect to $\beta$, one can show that the optimal choice of $\beta$ is
\begin{equation*}
 \beta^\ast=\textup{Cov}(\varphi(Y),\kappa(Y))/\textup{Var}(\kappa(Y)).
\end{equation*}
The variance of the corresponding estimator is $(1-\text{Corr}(\varphi(Y),\kappa(Y))^2)\text{Var}(\varphi(Y))$. 

It is known that control variates can decrease the computational efficiency of a Monte Carlo method, if the control variate is not sufficiently correlated with the random variable of interest \cite[Section 4.4]{Lemieux2009}. Recall the following quantitative heuristic for determining whether a control variate will increase the computational efficiency \cite[Section 8.9]{Owen2013}. Denote the average cost of generating a sample of $Y$ and $\varphi(Y)$ by $c_\varphi$, and denote the average cost of evaluating $\beta(\bb{E}[\kappa(Y)]-\kappa(Y))$ \emph{given} a sample value of $Y$ by $c_\kappa$. That is, $c_\kappa$ includes the cost of evaluating $\kappa(Y)$, but not of sampling $Y$. If the cost of evaluating the Monte Carlo approximation of $\beta^\ast$ is small relative to $c_\kappa$ and $c_\varphi$, and $\abs{\text{Corr}(\varphi(Y),\kappa(Y))}>\sqrt{ c_\kappa/(c_\varphi+c_\kappa)}$, then the method using the control variate $\kappa(Y)$ yields a more efficient Monte Carlo method.

Now we apply the preceding discussion to the stochastic optimal control problem \eqref{eq:finite_dimensional_optimisation_problem}. The random path $X^{u^a}$ and the path functional $(\phi_\tau+\lambda\cdot\tfrac{1}{2}(M^{u^a}_\tau)^2)(\cdot)$ correspond to the random variable $Y$ and the function $\varphi(\cdot)$. From \eqref{eq:derivation_of_objective_functional} it is natural to use $(M^{u^a}_\tau)(\cdot)$ as $\kappa(\cdot)$. In \cite[Section 4.2]{Lie2015}, this choice of control variate was described, but the computational efficiency of the control variate method was not discussed. 

The advantage to using $(M^{u^a}_\tau)(X^{u^a})$ as the control variate is that there is no additional cost of computing it, because we need to compute $(M^{u^a}_\tau)(X^{u^a})$ to compute $(\phi_\tau+\lambda\cdot\tfrac{1}{2}(M^{u^a}_\tau)^2)(X^{u^a})$. In addition, $(M^{u^a}_\tau)(X^{u^a})$ is correlated with $(\phi_\tau+\lambda\cdot\tfrac{1}{2}(M^{u^a}_\tau)^2)(X^{u^a})$. Given the cost argument in the preceding paragraph, we expect that using $(M^{u^a}_\tau)(X^{u^a})$ as a control variate will tend to increase the efficiency of the Monte Carlo method.

For any nonzero $z\in\bb{R}^{n}$, $(M^{u^z}_\tau )(X^{u^a})$ is a candidate for a control variate, since the expectation of this random variable is zero, by the martingale property and \eqref{eq:clmg}. The additional cost of computing the control variate is at most $2n-1$ floating-point operations, since these are the number of floating-point operations needed to compute $(M^{u^z}_\tau )(X^{u^a})$ from $((M^{b_1}_\tau )(X^{u^a}))_{k=1}^{n}$ and the vector $z\in\bb{R}^{n}$. Relative to computing a single realisation of $(X^{u^a}_t)_{0\leq t\leq \tau(X^{u^a})}$, $2n-1$ floating-point operations is small, and so one expects that the lower bound for the absolute value of the correlation will be small. Thus, to obtain an increase in efficiency of the Monte Carlo method, one needs to check that the correlation between $(M^{u^z}_\tau )(X^{u^a})$ and $(\phi_\tau+\lambda\cdot\tfrac{1}{2}(M^{u^a}_\tau)^2)(X^{u^a})$ is sufficiently large.

The control variate approach may be applied to the Monte Carlo estimation of the entries $(D_{a_k}\Phi(u^a;\lambda))_{k=1}^{n}$ of the gradient $\nabla \Phi(u^a;\lambda)$ as well as the entries $(D_{a_k a_\ell}\Phi(u^a;\lambda))_{k,\ell=1}^{n}$ of the Hessian. For $k,\ell\in\{1,\ldots,n\}$, candidate control variates for estimating $D_{a_k}\Phi(u^a;\lambda)$ and $D_{a_ka_\ell}\Phi(u^a;\lambda)$ are $M^{u^a}_\tau(X^{u^a})$ and $M^{b_k}_{\tau}(X^{u^a})$. If one redefines the function $\varphi(\cdot)$ to be $(\phi_\tau+\lambda(\cdot\tfrac{1}{2}(M^{u^a}_\tau)^2+M^{u^a}_\tau)(\cdot)$ and uses $M^{u^z}_\tau(X^{u^a})$ as the control variate, then the optimal $z^\ast\in\bb{R}^{n}$ which achieves the largest reduction in variance is obtained by solving
\begin{equation*}
\textup{Var}(M^{b_1}_\tau(X^{u^a}),\ldots, M^{b_n}_\tau(X^{u^a})) x=\nabla_a\Phi(u^a;\lambda)
\end{equation*}
for $x$. Above, $\textup{Var}(M^{b_1}_\tau(X^{u^a}),\ldots, M^{b_n}_\tau(X^{u^a}))\in\bb{R}^{n\times n}$ is a matrix whose $(k,\ell)$-th entry is given by $\textup{Cov}(M^{b_k}_\tau(X^{u^a}),M^{b_\ell}_\tau(X^{u^a}))$ for $k,\ell\in\{1,\ldots,n\}$. The entries of $\nabla_a\Phi(u^a;\lambda)$ are given by \eqref{eq:continuous_version_partial_derivative_finite_dim_objective}. The equation above expresses a relationship between the optimal control variate estimator for $\bb{E}[(\phi_\tau+\lambda(\cdot\tfrac{1}{2}(M^{u^a}_\tau)^2+M^{u^a}_\tau)(X^{u^a})]$ -- which has the same value as $\Phi(u^a;\lambda)$ -- and the gradient of the map $a\mapsto \Phi(u^a;\lambda)$. However, it is not possible to obtain a similar relationship between a control variate-based estimator for $D_{a_k}\Phi(u^a;\lambda)$ and the gradient of the map $a\mapsto D_{a_k}\Phi(u^a;\lambda)$. This is because of the change in the scaling of $M^{u^a}_\tau M^{b_k}_\tau$ from 1 to 2 when proceeding from \eqref{eq:continuous_version_partial_derivative_finite_dim_objective} to \eqref{eq:continuous_version_second_partial_derivative_finite_dim_objective}.

\section{Proofs}
\label{sec:proofs}

Below, $f$ and $g$ denote the diffusion and drift coefficients of \eqref{eq:sde}, $\tau$ and $\phi$ refer to a fixed stopping time and predictable function, and $V$ is the space of bounded functions \eqref{eq:V_vector_space_of_changes_of_drift}. 

Recall from \cref{lem:uniform_bound_on_quadratic_variation_time_t} that \cref{asmp:uniform_lower_bound_on_smallest_singular_value_of_diffusion} implies
\begin{equation*}
\Ang{M^v,M^w}_{\tau}\leq \alpha^{-2} \Vrt{v}_V\Vrt{w}_V\tau.
\end{equation*}

For the next result, we will need the following theorem, which we quote from \cite[Section 1.2, Theorem 1.5]{Kazamaki_1994}. 
\begin{theorem}
\label{thm:kazamakis_theorem}
	Let $M$ be a continuous local martingale on $(\Omega',(\mathcal{F}_t')_{t\geq 0},\bb{P}')$. Let $1<p,q<\infty$ satisfy $p^{-1}+q^{-1}=1$. If
	\begin{equation}\label{eq:kazamakis_condition_for_Lq_boundedness_of_exp_MG}
		\sup\left\{\bb{E}_{\bb{P}'}\left[\exp\left(\frac{1}{2}\frac{\sqrt{p}}{\sqrt{p}-1}M_T\right)\right]\ \middle\vert\ T\text{ a $\bb{P}'$-a.s. bounded stopping time }\right\}<\infty, 
	\end{equation}
	then the exponential martingale $\mathcal{E}(M)$ defined by \eqref{eq:exponential_martingale} is an $L^q(\bb{P}')$-bounded martingale.
\end{theorem}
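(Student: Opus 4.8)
The plan is to reduce the assertion to a single uniform $L^q$-bound and then extract that bound from an explicit multiplicative decomposition of $\mathcal{E}(M)^q$ into two factors, one controlled by the supermartingale property of an exponential martingale and the other by the hypothesis \eqref{eq:kazamakis_condition_for_Lq_boundedness_of_exp_MG}. First I would argue that it suffices to prove
\[
 \sup\left\{\bb{E}_{\bb{P}'}[\mathcal{E}(M)_T^q]\ :\ T\text{ a }\bb{P}'\text{-a.s.\ bounded stopping time}\right\}<\infty.
\]
Indeed, $\mathcal{E}(M)$ is a nonnegative continuous local martingale and hence a supermartingale; a uniform bound of this form with $q>1$ makes the family $\{\mathcal{E}(M)_T\}$ bounded in $L^q$, hence uniformly integrable (de la Vall\'ee-Poussin with $x\mapsto x^q$). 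This puts $\mathcal{E}(M)$ in class (DL), which upgrades a local martingale to a genuine martingale, and the same bound then yields $L^q$-boundedness.

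Next I would fix the constants. Write $a:=\frac{\sqrt p}{\sqrt p-1}$, so the exponent in \eqref{eq:kazamakis_condition_for_Lq_boundedness_of_exp_MG} is $\tfrac12 a$, and observe that the conjugacy $p^{-1}+q^{-1}=1$ is equivalent to $q=\frac{a^2}{2a-1}$ (solve $\sqrt p=a/(a-1)$ and simplify $a^2-(a-1)^2=2a-1$). The heart of the proof is the pointwise identity
\[
 \mathcal{E}(M)_t^q=\mathcal{E}(aM)_t^{\,1/(2a-1)}\exp\!\left(\tfrac{a(a-1)}{2a-1}\,M_t\right),
\]
which I would verify by matching, in the exponents, the coefficients of $M_t$ and of $\Ang{M,M}_t$: the factor $\mathcal{E}(aM)^{1/(2a-1)}$ contributes the $\Ang{M,M}_t$-coefficient $-\tfrac{a^2}{2(2a-1)}=-\tfrac q2$, and the second factor contributes none, so both sides carry exactly $qM_t-\tfrac q2\Ang{M,M}_t$. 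The motivation for seeking a decomposition $\mathcal{E}(M)^q=\mathcal{E}(\lambda M)^A\exp(BM)$ with \emph{no} residual $\Ang{M,M}$ term is that the resulting quadratic for $\lambda$ has vanishing discriminant precisely when $q=\frac{a^2}{2a-1}$, which forces the clean values $\lambda=a$, $A=1/(2a-1)$ and $B=a(a-1)/(2a-1)$.

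With the identity in hand I would apply H\"older's inequality with the conjugate exponents $u:=2a-1$ and $v:=\frac{2a-1}{2(a-1)}$, which both exceed $1$ and satisfy $u^{-1}+v^{-1}=1$ because $a>1$. Since $Au=1$ and $Bv=\tfrac a2$, this gives, for every bounded stopping time $T$,
\[
 \bb{E}_{\bb{P}'}[\mathcal{E}(M)_T^q]\leq
 \bb{E}_{\bb{P}'}\!\left[\mathcal{E}(aM)_T\right]^{1/(2a-1)}\,
 \bb{E}_{\bb{P}'}\!\left[\exp\!\left(\tfrac a2 M_T\right)\right]^{2(a-1)/(2a-1)}.
\]
The first factor is at most $1$: $\mathcal{E}(aM)$ is a nonnegative local martingale, hence a supermartingale with $\mathcal{E}(aM)_0=1$, so optional stopping at the bounded time $T$ gives $\bb{E}_{\bb{P}'}[\mathcal{E}(aM)_T]\leq 1$. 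The second factor is bounded by $K^{2(a-1)/(2a-1)}$, where $K$ denotes the finite supremum in \eqref{eq:kazamakis_condition_for_Lq_boundedness_of_exp_MG}. This delivers the uniform $L^q$-bound and closes the argument.

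The main obstacle is finding the decomposition: one has two degrees of freedom (the parameter $\lambda$ and the H\"older split) against three constraints — reproducing the $M_t$ and $\Ang{M,M}_t$ coefficients of $\mathcal{E}(M)^q$ and matching the exponent $a/2$ dictated by the hypothesis — and it is a genuine feature of the specific constant $\frac{\sqrt p}{\sqrt p-1}$, rather than an arbitrary choice, that these constraints are simultaneously solvable, via the vanishing discriminant noted above. A secondary point that I would treat with care is the passage from the uniform $L^q$-bound over bounded stopping times to the conclusion that $\mathcal{E}(M)$ is a true $L^q$-bounded martingale; this rests on the class-(DL) characterisation of local martingales and on optional stopping for nonnegative supermartingales, both available here because $q>1$ and $\mathcal{E}(M)\geq 0$.
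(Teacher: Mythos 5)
Your proof is correct, but there is a wrinkle in the comparison: the paper does not prove this statement at all --- it is quoted verbatim from \cite{Kazamaki_1994} (Section 1.2, Theorem 1.5), and the paper only reuses fragments of that proof later, in \cref{cor:square_integrability_of_change_of_measure_at_stopping_time}. What you have reconstructed is essentially the classical argument from the cited monograph, and the details check out: with $a=\sqrt p/(\sqrt p-1)$ one indeed has $q=a^2/(2a-1)$; the factorisation $\mathcal{E}(M)_t^q=\mathcal{E}(aM)_t^{1/(2a-1)}\exp\bigl(\tfrac{a(a-1)}{2a-1}M_t\bigr)$ matches both the $M_t$-coefficient $\tfrac{a}{2a-1}+\tfrac{a(a-1)}{2a-1}=q$ and the $\Ang{M,M}_t$-coefficient $-\tfrac{a^2}{2(2a-1)}=-\tfrac q2$; the H\"older exponents $u=2a-1$, $v=\tfrac{2a-1}{2(a-1)}$ are conjugate with $Au=1$ and $Bv=\tfrac a2$, so the hypothesis \eqref{eq:kazamakis_condition_for_Lq_boundedness_of_exp_MG} bounds the second factor while optional stopping of the nonnegative supermartingale $\mathcal{E}(aM)$ bounds the first by $1$; and the passage from the uniform bound over bounded stopping times to a true $L^q$-bounded martingale via uniform integrability and the class-(DL) criterion is the standard and correct closing step. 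The only implicit assumption you should state is $M_0=0$, which is needed for $\mathcal{E}(aM)_0=1$ in the optional-stopping step; this is part of Kazamaki's setting and is satisfied by every martingale $M^v$ to which the paper applies the theorem, since by \eqref{eq:clmg} these are It\^{o} integrals started at zero.
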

We will use \cref{thm:kazamakis_theorem} to establish the following integrability result. 
\begin{corollary}
\label{cor:square_integrability_of_change_of_measure_at_stopping_time}
Suppose that \cref{asmp:uniform_lower_bound_on_smallest_singular_value_of_diffusion} holds with parameter $\alpha$. Let $u\in V$ and suppose there exists $\lambda_u>0$ such that $\bb{E}^{u}[\exp(\lambda_u \tau)]$ is finite. Let $1<p,q<\infty$ satisfy $p^{-1}+q^{-1}=1$. If $v\in V$ satisfies 
\begin{equation*}
\Vrt{v}_{V}^{2}\leq \lambda_u \left( \tfrac{1}{4}\left(\tfrac{\sqrt{p}}{\sqrt{p}-1}\right)^2\alpha^{-2} \right)^{-1}
\end{equation*}
then $\mathcal{E}(M^{v})_\tau\in L^q(\mu^{u})$.
\end{corollary}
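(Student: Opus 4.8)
The plan is to invoke Kazamaki's theorem (\cref{thm:kazamakis_theorem}) on the probability space carrying $\mu^{u}$, applied not to $M^{v}$ itself but to its stopped version $N:=M^{v}_{\cdot\wedge\tau}$, with the same exponents $p,q$. Stopping at $\tau$ is essential: by \eqref{eq:quadratic_covariation_process} and \cref{lem:uniform_bound_on_quadratic_variation_time_t} the quadratic variation of $M^{v}$ grows only linearly in time, $\Ang{M^{v},M^{v}}_{t}\le\alpha^{-2}\Vrt{v}_V^{2}\,t$, which is unbounded over the class of all bounded stopping times appearing in Kazamaki's condition; after stopping, however, $\Ang{N,N}_{\infty}=\Ang{M^{v},M^{v}}_{\tau}\le\alpha^{-2}\Vrt{v}_V^{2}\,\tau$ is dominated by a quantity with a finite exponential moment under $\mu^{u}$. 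Granting Kazamaki's condition for $N$, the theorem yields that $\mathcal{E}(N)$ is an $L^{q}(\mu^{u})$-bounded martingale; since for $q>1$ such a martingale is closed in $L^{q}$ and $\mathcal{E}(N)_{\infty}=\mathcal{E}(M^{v})_{\tau}$ on $\{\tau<\infty\}$ — a $\mu^{u}$-full set, because $\bb{E}^{u}[\exp(\lambda_{u}\tau)]<\infty$ forces $\tau<\infty$ almost surely — the conclusion $\mathcal{E}(M^{v})_\tau\in L^{q}(\mu^{u})$ follows at once.

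It therefore remains to verify the Kazamaki condition \eqref{eq:kazamakis_condition_for_Lq_boundedness_of_exp_MG} for $N$, namely to bound $\bb{E}^{u}[\exp(cN_{T})]$ uniformly over $\mu^{u}$-a.s.\ bounded stopping times $T$, where $c:=\tfrac12\tfrac{\sqrt p}{\sqrt p-1}$. The mechanism I would use is the factorisation $\exp(cN_{T})=\mathcal{E}(2cN)_{T}^{1/2}\exp(c^{2}\Ang{N,N}_{T})$, immediate from \eqref{eq:exponential_martingale}. The Doléans exponential $\mathcal{E}(2cN)$ is a nonnegative local martingale, hence a supermartingale with $\bb{E}^{u}[\mathcal{E}(2cN)_{T}]\le1$, so all of the $T$-dependence is carried by the quadratic-variation factor, which is itself dominated by the $T$-independent quantity $\exp(c^{2}\alpha^{-2}\Vrt{v}_V^{2}\tau)$ by the bound of the previous paragraph. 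This reduces the whole verification to the finiteness of a single exponential moment of $\tau$. Since $c^{2}=\tfrac14\bigl(\tfrac{\sqrt p}{\sqrt p-1}\bigr)^{2}$, the smallness hypothesis $\Vrt{v}_V^{2}\le\lambda_{u}\bigl(\tfrac14(\tfrac{\sqrt p}{\sqrt p-1})^{2}\alpha^{-2}\bigr)^{-1}$ is exactly what makes the coefficient $c^{2}\alpha^{-2}\Vrt{v}_V^{2}$ no larger than $\lambda_{u}$, so that the relevant moment is controlled by $\bb{E}^{u}[\exp(\lambda_{u}\tau)]<\infty$.

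The main obstacle is executing this last step with the \emph{sharp} constant. A crude Cauchy--Schwarz splitting of $\mathcal{E}(2cN)_{T}^{1/2}$ from $\exp(c^{2}\Ang{N,N}_{T})$ produces the coefficient $2c^{2}$ rather than $c^{2}$ in front of $\alpha^{-2}\Vrt{v}_V^{2}\tau$, and would therefore require the stronger moment $\bb{E}^{u}[\exp(2\lambda_{u}\tau)]<\infty$, losing the stated constant by a factor of two. To land precisely on $c^{2}$ one must avoid decoupling the two factors so wastefully; a natural route is to pass to the measure $\mathrm{d}\mathbb{Q}=\mathcal{E}(cN)_{\infty}\,\mathrm{d}\mu^{u}$ (using that $\mathcal{E}(cN)$ is a genuine, not merely super-, martingale), under which $\bb{E}^{u}[\exp(cN_{T})]=\bb{E}^{\mathbb{Q}}[\exp(\tfrac12 c^{2}\Ang{N,N}_{T})]$, and then to transfer the resulting bound back to an exponential moment of $\tau$ \emph{under $\mu^{u}$}. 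Ensuring that the martingale factor contributes no spurious exponential growth in the quadratic variation — so that the effective coefficient on $\alpha^{-2}\Vrt{v}_V^{2}\tau$ is exactly $c^{2}$ and not larger — is the delicate point on which the precise form of the constant in the statement rests.
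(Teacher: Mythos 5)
Your outline is the paper's own argument: apply Kazamaki's criterion (\cref{thm:kazamakis_theorem}) to the stopped local martingale $M^v_{\cdot\wedge\tau}$, factor $\exp(cM^v_{\tau\wedge T})$ into the square root of a Dol\'{e}ans exponential (whose expectation is at most $1$) times an exponential of the bracket, dominate $\Ang{M^v,M^v}_{\tau\wedge T}\leq \alpha^{-2}\Vrt{v}_V^2\tau$ via \cref{lem:uniform_bound_on_quadratic_variation_time_t}, and close with the exponential moment of $\tau$. The paper executes exactly the Cauchy--Schwarz splitting you call ``crude'': it writes $\exp(\tfrac12 M^v_{\tau\wedge T})=\mathcal{E}(M^v)_{\tau\wedge T}^{1/2}\exp(\tfrac14\Ang{M^v,M^v}_{\tau\wedge T})$, applies Cauchy--Schwarz, and then rescales $M^v\mapsto \tfrac{\sqrt p}{\sqrt p-1}M^v$. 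So the only point where you diverge is the constant, and there your suspicion is well founded: squaring the factor $\exp(\tfrac14\Ang{M^v,M^v}_{\tau\wedge T})$ under Cauchy--Schwarz produces $\exp(\tfrac12\Ang{M^v,M^v}_{\tau\wedge T})$, i.e.\ the coefficient $\tfrac12\bigl(\tfrac{\sqrt p}{\sqrt p-1}\bigr)^2$ rather than the $\tfrac14\bigl(\tfrac{\sqrt p}{\sqrt p-1}\bigr)^2$ appearing in the statement; the paper's displayed inequality keeps the unsquared exponent. This costs only a factor of $2$ in the admissible radius for $\Vrt{v}_V^2$ and is immaterial for every downstream use (\cref{lem:existence_of_ball_of_admissible_changes_of_drift} and the rest only need \emph{some} positive radius), but as stated the sharp constant is not delivered by this computation.

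What you should not do is lean on the change-of-measure repair in your last paragraph: it is circular. Passing to $\mathrm{d}\mathbb{Q}=\mathcal{E}(cN)_\infty\,\mathrm{d}\mu^{u}$ presupposes that $\mathcal{E}(cN)$ is a uniformly integrable martingale --- which is the kind of statement the corollary is meant to establish --- and, more fatally, it converts the required bound into an exponential moment of $\tau$ under $\mathbb{Q}$, i.e.\ under (essentially) $\mu^{u+cv}$. Transferring that back to $\mu^{u}$ is precisely the content of \cref{lem:exponentially_integrable_stopping_time_on_mu_u}, whose hypothesis is $\mathcal{E}(M^{cv})_\tau\in L^q(\mu^{u})$ --- the conclusion you are trying to prove. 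The clean resolution is simply to accept the Cauchy--Schwarz constant: state and prove the corollary with $\Vrt{v}_V^2\leq\lambda_u\bigl(\tfrac12(\tfrac{\sqrt p}{\sqrt p-1})^2\alpha^{-2}\bigr)^{-1}$, which your argument (and the paper's, correctly bookkept) establishes without any delicacy.
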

The integrability result above is important for the main results of \cref{sec:derivatives}. It shows that as $\Vrt{v}_V$ decreases to zero, one can choose $p$ to be closer to 1, which means that $q$ may increase to $\infty$. Thus, the result implies that as $\Vrt{v}_V$ decreases to zero, the integrability $q$ of $\mathcal{E}(M^{v})_\tau$ increases. 

\begin{proof}[Proof of \cref{cor:square_integrability_of_change_of_measure_at_stopping_time}]
We shall use parts of the proof of \cite[Section 1.2, Theorem 1.5]{Kazamaki_1994}. 

Let $T$ be any $\mu^{u}$-a.s. bounded stopping time. By \eqref{eq:exponential_martingale},
 \begin{equation*}
  \exp\left(\tfrac{1}{2}M^v_{\tau\wedge T}\right)=\left(\mathcal{E}(M^v)_{\tau\wedge T}\right)^{1/2}\exp\left(\tfrac{1}{4}\Ang{M^v,M^v}_{\tau\wedge T}\right)
 \end{equation*}
where $s\wedge t:=\min\{s,t\}$. Thus, by the Cauchy-Schwarz inequality,
 \begin{align*}
  \bb{E}^{u}\left[ \exp\left(\tfrac{1}{2}M^v_{\tau\wedge T}\right)\right]\leq & \left(\bb{E}^{u}\left[\mathcal{E}(M^v)_{\tau\wedge T}\right]\bb{E}^{u}\left[\exp\left(\tfrac{1}{4}\Ang{M^v,M^v}_{\tau\wedge T}\right)\right]\right)^{1/2}
  \\
  =& \bb{E}^{u}\left[\exp\left(\tfrac{1}{4}\Ang{M^v,M^v}_{\tau\wedge T}\right)\right]^{1/2}.
 \end{align*}
If in the inequality above we replace the continuous local martingale $M^v$ with the continuous local martingale $\tfrac{\sqrt{p}}{\sqrt{p}-1}M^v$, then 
\begin{align*}
 \bb{E}^{u}\left[\exp\left(\tfrac{1}{2}\tfrac{\sqrt{p}}{\sqrt{p}-1}M^v_{\tau\wedge T}\right)\right] \leq& \bb{E}^{u}\left[\exp\left( \tfrac{1}{4}\left(\tfrac{\sqrt{p}}{\sqrt{p}-1}\right)^2\Ang{M^v,M^v}_{\tau\wedge T}\right)\right]^{1/2}
 \\
 \leq & \bb{E}^{u}\left[\exp\left( \tfrac{1}{4}\left(\tfrac{\sqrt{p}}{\sqrt{p}-1}\right)^2\alpha^{-2} \Vrt{v}_V^2 \tau\right)\right]^{1/2}
 \\
 \leq & \bb{E}^{u}\left[\exp\left(\lambda_u\tau\right)\right]^{1/2},
\end{align*}
where we used \cref{lem:uniform_bound_on_quadratic_variation_time_t} in the second inequality and the hypothesis on $\Vrt{v}_V$ in the third inequality. By the hypothesis on $\lambda_u$, the last quantity is finite. Thus, by \cref{thm:kazamakis_theorem}, the desired conclusion follows.
\end{proof}

\begin{lemma}
\label{lem:exponentially_integrable_stopping_time_on_mu_u}
 Let $u\in V$ and suppose there exists $\lambda_u>0$ such that $\bb{E}^{u}[\exp(\lambda_u \tau)]$ is finite. Let $1<p,q<\infty$ satisfy $p^{-1}+q^{-1}=1$. If $v\in V$ is such that $\mathcal{E}(M^v)_\tau\in L^q(\mu^{u})$, then
 \begin{equation*}
  \bb{E}^{u+v}\left[\exp\left(\tfrac{1}{p}\lambda_u\tau\right)\right]<\infty.
 \end{equation*}
\end{lemma}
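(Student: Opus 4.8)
The plan is to combine the change of measure formula \eqref{eq:change_of_measure} with H\"{o}lder's inequality applied to the conjugate exponents $p$ and $q$. First I would apply \eqref{eq:change_of_measure} with $w=v$ and the nonnegative functional $\varphi_\tau=\exp(\tfrac{1}{p}\lambda_u\tau)$ to write
\begin{equation*}
\bb{E}^{u+v}\left[\exp\left(\tfrac{1}{p}\lambda_u\tau\right)\right]=\bb{E}^{u}\left[\exp\left(\tfrac{1}{p}\lambda_u\tau\right)\mathcal{E}(M^v)_\tau\right].
\end{equation*}
Then I would bound the right-hand side by H\"{o}lder's inequality, using $\exp(\tfrac{1}{p}\lambda_u\tau)^p=\exp(\lambda_u\tau)$, to obtain
\begin{equation*}
\bb{E}^{u}\left[\exp\left(\tfrac{1}{p}\lambda_u\tau\right)\mathcal{E}(M^v)_\tau\right]\leq \bb{E}^{u}\left[\exp\left(\lambda_u\tau\right)\right]^{1/p}\Vrt{\mathcal{E}(M^v)_\tau}_{L^q(\mu^{u})}.
\end{equation*}
The first factor is finite by the hypothesis $\bb{E}^{u}[\exp(\lambda_u\tau)]<\infty$, and the second is finite by the hypothesis $\mathcal{E}(M^v)_\tau\in L^q(\mu^{u})$. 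This immediately gives the claimed finiteness.

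The one point that requires care, and which I expect to be the only real obstacle, is that the reweighting identity \eqref{eq:change_of_measure} as stated presumes the $L^1(\mu^{u+v})$-integrability of the functional in question, which is precisely what we are trying to establish. To avoid this apparent circularity I would first run the argument on the bounded, nonnegative truncations $\exp(\tfrac{1}{p}\lambda_u\tau)\wedge N$ for $N\in\bb{N}$, each of which is integrable so that \eqref{eq:change_of_measure} applies unconditionally. The H\"{o}lder bound above then holds uniformly in $N$, since $(\exp(\tfrac{1}{p}\lambda_u\tau)\wedge N)^p\leq \exp(\lambda_u\tau)$ pointwise. Letting $N\to\infty$ and invoking the monotone convergence theorem on the left-hand side upgrades the uniform bound to the desired conclusion.

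In summary, the analytic content is simply H\"{o}lder's inequality applied to the Radon--Nikodym density $\mathcal{E}(M^v)_\tau$ relating $\mu^{u}$ and $\mu^{u+v}$, exploiting that raising $\exp(\tfrac{1}{p}\lambda_u\tau)$ to the power $p$ exactly recovers the exponentially integrable quantity $\exp(\lambda_u\tau)$; the remaining work is the routine truncation bookkeeping needed to legitimately invoke the change-of-measure formula. This result is the natural companion to \cref{cor:square_integrability_of_change_of_measure_at_stopping_time}: the latter guarantees $\mathcal{E}(M^v)_\tau\in L^q(\mu^{u})$ for $\Vrt{v}_V$ small enough, and the present lemma then transfers exponential integrability of $\tau$ from $\mu^{u}$ to $\mu^{u+v}$ at the reduced rate $\tfrac{1}{p}\lambda_u$.
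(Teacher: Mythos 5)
Your argument is exactly the paper's proof: apply the change-of-measure identity \eqref{eq:change_of_measure} and then H\"{o}lder's inequality with the conjugate exponents $p,q$, using $\exp(\tfrac{1}{p}\lambda_u\tau)^p=\exp(\lambda_u\tau)$. The truncation-and-monotone-convergence step you add to justify invoking \eqref{eq:change_of_measure} is a careful refinement the paper leaves implicit, but it does not change the substance of the argument.
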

\begin{proof}
 By applying H\"{o}lder's inequality and the change of measure formula \eqref{eq:change_of_measure},
 \begin{equation*}
  \bb{E}^{u+v}\left[\exp\left(\tfrac{1}{p}\lambda_u\tau\right)\right]
  \leq \bb{E}^{u}\left[\exp(\lambda_u\tau)\right]^{1/p}\bb{E}^{u}\left[\mathcal{E}(M^v)_\tau^q\right]^{1/q}.
 \end{equation*}
By the hypotheses, the right-hand side of the inequality is finite, as desired.
 \end{proof}
 
\begin{proof}[Proof of \cref{lem:existence_of_ball_of_admissible_changes_of_drift}]
By applying \cref{cor:square_integrability_of_change_of_measure_at_stopping_time} with $u=0$ and $p=q=2$, it follows that every $v\in U_0$ satisfies $\mathcal{E}(M^v)_\tau\in L^2(\mu^{0})$. Thus, by applying  \cref{lem:exponentially_integrable_stopping_time_on_mu_u} with $u=0$, $v\in U_0$, and $p=q=2$, it follows that $\bb{E}^{v}\left[\exp\left(\tfrac{1}{2}\lambda_0\tau\right)\right]$ is finite. Since $U$ in \cref{asmp:ball_of_admissible_changes_of_drift} is defined to be a convex set containing $0\in V$ such that for every $u\in U$, there exists $\lambda_u>0$ for which $\bb{E}^{u}[\exp(\lambda_u\tau)]$ is finite, it follows that $U_0\subset U$ must hold.

If $\tau$ is a deterministic fixed stopping time $T>0$, then $\bb{E}^{0}[\exp(\lambda_0 \tau)]=\exp(\lambda_0 T)$, so we may choose $\lambda_0>0$ to be arbitrarily large. Thus, $U$ contains every $V$-norm ball centred at the origin $0$ with positive radius. This implies that $U=V$.
\end{proof}
 
In \cref{lem:convergence_lemmas_for_change_of_measure}, we assume that  \cref{asmp:uniform_lower_bound_on_smallest_singular_value_of_diffusion} and \cref{asmp:ball_of_admissible_changes_of_drift} hold, and aim to show that for every $u\in U$ and for every $s\geq 1$,
\begin{align*}
\lim_{\Vrt{v}_V\to 0}\frac{ \Vrt{ \mathcal{E}(M^v)_\tau-1-M^v_\tau}_{L^s(\mu^{u})}}{\Vrt{v}_V}=&  0.
 \\
\lim_{\Vrt{v}_V\to 0}\frac{ \Vrt{ \mathcal{E}(M^v)_\tau-1}_{L^s(\mu^{u})}}{\Vrt{v}_V}= & 0.
\end{align*}

The proof of \cref{lem:convergence_lemmas_for_change_of_measure} given below uses a similar strategy as the proof of \cite[Lemma 2.8]{Koltai_2019}, which considered the special case where the stopping time $\tau$ is a deterministic, finite number $T>0$.
\begin{proof}[Proof of \cref{lem:convergence_lemmas_for_change_of_measure}]
Let $s \geq 1$. Recall \eqref{eq:convergence_lemma_for_change_of_measure_minus_one_minus_CLMG}:
\begin{equation}
\label{eq:convergence_lemma_for_change_of_measure_minus_one_minus_CLMG_Ls}
\lim_{\Vrt{v}_V\to 0}\frac{ \Vrt{ \mathcal{E}(M^v)_\tau-1-M^v_\tau}_{L^s(\mu^{u})}}{\Vrt{v}_V}=  0.
\end{equation}
If \eqref{eq:convergence_lemma_for_change_of_measure_minus_one_minus_CLMG_Ls} is true, then since $\Vrt{v}_V\leq 1$ implies that for every $0\leq r<1$, we have $\Vrt{v}_V^{-1} \geq \Vrt{v}_V^{-r}$ and hence
\begin{equation*}
\lim_{\Vrt{v}_V\to 0}\frac{ \Vrt{ \mathcal{E}(M^v)_\tau-1-M^v_\tau}_{L^s(\mu^{u})}}{\Vrt{v}_V^{r}}=  0.
\end{equation*}
Since \cref{asmp:uniform_lower_bound_on_smallest_singular_value_of_diffusion} holds, we may use \eqref{eq:Lr_bound_clmg} from \cref{lem:uniform_bound_on_quadratic_variation_time_t} and the triangle inequality to obtain
\begin{align*}
 \Vrt{ \mathcal{E}(M^v)_\tau-1}_{L^s(\mu^{u})}\leq &\Vrt{ \mathcal{E}(M^v)_\tau-1-M^v_\tau}_{L^s(\mu^{u})}+\Vrt{M^v_\tau}_{L^s(\mu^{u})}
 \\
 \leq &\Vrt{ \mathcal{E}(M^v)_\tau-1-M^v_\tau}_{L^s(\mu^{u})}+2\alpha^{-1}\Vrt{v}_V\Vrt{\tau}_{L^{s/2}(\mu^{u})}^{1/2}.
\end{align*}
By \cref{asmp:ball_of_admissible_changes_of_drift}, there exists $\lambda_u>0$ such that $\bb{E}^{u}[\exp(\lambda_u\tau)]$ is finite, and hence for every $s\in\bb{N}$, $\Vrt{\tau}_{L^{s/2}(\mu^{u})}$ is finite. Thus for any $0\leq r<1$,
\begin{equation*}
 \lim_{\Vrt{v}_V\to 0} \frac{\Vrt{ \mathcal{E}(M^v)_\tau-1}_{L^s(\mu^{u})}}{\Vrt{v}_V^{r}}=0,
\end{equation*}
which is \eqref{eq:convergence_lemma_for_change_of_measure_minus_one}. Thus, to prove \cref{lem:convergence_lemmas_for_change_of_measure}, it suffices to prove \eqref{eq:convergence_lemma_for_change_of_measure_minus_one_minus_CLMG_Ls}.

We shall use the following consequence of Young's inequality: for $x,y>0$ and $s\in\bb{N}$,
\begin{equation*}
 (x+y)^s<(1+2^{s-1})(x^s+y^s).
\end{equation*}
For a proof, see \cite[Lemma B.4]{LieStahnSullivan2021}. Using the inequality above, \eqref{eq:exponential_martingale}, and the series expansion for the exponential,
\begin{equation*}
\begin{aligned}
&\Vrt{ \mathcal{E}(M^v)_\tau-1-M^v_\tau}^s_{L^s(\mu^{u})}
\\
&\leq (1+2^{s-1})\bb{E}^{u}\left[ \frac{1}{2^s}\Ang{M^v,M^v}^s_\tau +\left(\sum_{\ell=2}^{\infty}\frac{1}{\ell!}\left(\abs{M^v_\tau}+\frac{1}{2}\Ang{M^v,M^v}_\tau\right)^\ell\right)^s\right].
\end{aligned}
\end{equation*}
By the bound \eqref{eq:Lr_bound_covariance} in \cref{lem:uniform_bound_on_quadratic_variation_time_t},
\begin{equation*}
 \lim_{\Vrt{v}_V\to 0}\frac{\bb{E}^{u}[\Ang{M^v,M^v}_\tau^s]}{\Vrt{v}_V^s}\leq \alpha^{-2}\Vrt{\tau}_{L^{s}(\mu^{u})}\lim_{\Vrt{v}_V\to 0}\frac{\Vrt{v}_V^{2s}}{\Vrt{v}_V^s}=0.
\end{equation*}
Hence, to prove \eqref{eq:convergence_lemma_for_change_of_measure_minus_one_minus_CLMG_Ls}, we must prove that
\begin{equation}
\label{eq:convergence_lemma_for_change_of_measure_minus_one_minus_CLMG_toprove}
 \lim_{\Vrt{v}_V\to 0}\frac{1}{\Vrt{v}_V^s}\bb{E}^{u}\left[\left(\sum_{\ell=2}^{\infty}\frac{1}{\ell!}\left(\abs{M^v_\tau}+\frac{1}{2}\Ang{M^v,M^v}_\tau\right)^\ell\right)^s\right]=0.
\end{equation}
For $a,b\geq 0$, collecting powers of $(a+b)$ and using the binomial theorem yields
\begin{align}
\left(\sum^\infty_{\ell=2}\frac{1}{\ell!}(a+b)^\ell\right)^s=&\sum_{n=2^s}^\infty\sum_{(i_j)_j\in\mathcal{I}_{s,n}}\frac{1}{\prod_{j\in[s]}i_j!}(a+b)^n
\nonumber
\\
=& \sum_{n=2^s}^\infty\sum_{(i_j)_j\in\mathcal{I}_{s,n}}\frac{1}{\prod_{j\in[s]}i_j!}\sum_{m=0}^{n}\begin{pmatrix} n \\ m \end{pmatrix}a^{n-m}b^{m},
\label{eq:sth_power_identity}
\end{align}
where $\mathcal{I}_{s,n}=\{(i_1,\ldots,i_s)\ :\ \forall j\in[s], i_j\geq 2,\ \sum_{j\in[s]} i_j=n\}$. 

Let $a:= \abs{M^v_\tau}$ and $b:=\Ang{M^v,M^v}_\tau$. Using Tonelli's theorem and \eqref{eq:sth_power_identity}, we obtain
\begin{equation}
\begin{aligned}
&\bb{E}^{u}\left[\left(\sum^\infty_{\ell=2}\frac{1}{\ell!}(\abs{M^v_\tau}+\Ang{M^v,M^v}_\tau)^\ell\right)^s\right] 
\\
=&\sum_{n=2^s}^\infty\sum_{(i_j)_j\in\mathcal{I}_{s,n}}\sum_{m=0}^{n}\begin{pmatrix} n \\ m \end{pmatrix}\bb{E}^{u}\left[\abs{M^v_\tau}^{n-m}\Ang{M^v,M^v}_\tau^{m}\right].
\label{eq:intermediate00}
\end{aligned}
\end{equation}
For the third sum on the right-hand side of \eqref{eq:intermediate00}, consider the summands corresponding to $0<m<n$. Observe that $p:=\tfrac{n+m}{n-m}>1$ and $q:= \tfrac{n+m}{2m}>1$ are H\"{o}lder conjugate exponents. Young's inequality yields
\begin{equation*}
 \abs{M^v_\tau}^{n-m}\Ang{M^v,M^v}_\tau^{m}\leq \frac{1}{p}\abs{M^v_\tau}^{(n-m)p}+\frac{1}{q}\Ang{M^v,M^v}_\tau^{m q}< \abs{M^v_\tau}^{n+m}+\Ang{M^v,M^v}_\tau^{(n+m)/2}.
\end{equation*}
	Hence, using Doob's inequality \eqref{eq:doobs_inequality}, it follows that for some absolute constant $C>0$,
	\begin{equation*}
		\bb{E}^{u}\left[\abs{M^v_\tau}^{n-m}\Ang{M^v,M^v}_\tau^{m}\right]\leq (C+1)\bb{E}^{u}\left[\Ang{M^v,M^v}_\tau^{(n+m)/2}\right],\quad 0<m<n.
	\end{equation*}
	For the summand given by $m=n$, $\bb{E}^{u}\left[\Ang{M^v,M^v}_\tau^n\right]= \bb{E}^{u}\left[\Ang{M^v,M^v}^{(n+n)/2}\right]$. For the summand given by $m=0$, \eqref{eq:doobs_inequality} yields $\bb{E}^{u}\left[\abs{M^v_\tau}^{n}\right]\leq C \bb{E}^{u}\left[\Ang{M^v,M^v}_\tau^{n/2}\right]$. Thus, the above inequality holds for $0\leq m\leq n$. Using this fact and the binomial theorem,
	\begin{align}
	 \sum_{m=0}^{n}\begin{pmatrix} n \\ m \end{pmatrix}\bb{E}^{u}\left[\abs{M^v_\tau}^{n-m}\Ang{M^v,M^v}_\tau^{m}\right]
    &\leq (C+1)\bb{E}^{u}\left[\Ang{M^v,M^v}_\tau^{n/2}\sum_{m=0}^{n}\begin{pmatrix} n \\ m \end{pmatrix}1^{n-m}\Ang{M^v,M^v}_\tau^{m/2}\right]
    \nonumber
    \\
    &= (C+1)\bb{E}^{u}\left[\Ang{M^v,M^v}_\tau^{n/2}\left(1+\Ang{M^v,M^v}_\tau^{1/2}\right)^n\right]
    \nonumber
    \\
    &= (C+1)\bb{E}^{u}\left[(\Ang{M^v,M^v}_\tau^{1/2}+\Ang{M^v,M^v}_\tau)^n\right].
    \nonumber
	\end{align}
    By using the above bound in \eqref{eq:intermediate00}, and then applying Tonelli's theorem and \eqref{eq:sth_power_identity}, we obtain 
    \begin{equation*}
     \bb{E}^{u}\left[\left(\sum^\infty_{\ell=2}\frac{1}{\ell!}(\abs{M^v_\tau}+\Ang{M^v,M^v}_\tau)^\ell\right)^s\right]
     \leq (C+1)\bb{E}^{u}\left[\left(\sum^\infty_{\ell=2}\frac{1}{\ell!}(\Ang{M^v,M^v}_\tau^{1/2}+\Ang{M^v,M^v}_\tau)^\ell\right)^s\right].
    \end{equation*}
    We will show that the right-hand side divided by $\Vrt{v}_V^s$ remains finite in the limit as $\Vrt{v}_V\to 0$. This will justify using the dominated convergence theorem to interchange the limit with the expectation, and thus will prove \eqref{eq:convergence_lemma_for_change_of_measure_minus_one_minus_CLMG_toprove}. For $0<\Vrt{v}_V\leq 1$, we have
    \begin{align*}
     \Ang{M^v,M^v}_\tau^{1/2}+\Ang{M^v,M^v}_\tau\leq (\alpha^{-2}\tau\Vrt{v}_V^2)^{1/2}+\alpha^{-2}\tau \Vrt{v}_V^2< \Vrt{v}_V+2\alpha^{-2}\tau\Vrt{v}_V.
    \end{align*}
    The first inequality follows from \cref{lem:uniform_bound_on_quadratic_variation_time_t}. The second inequality follows from applying $ab\leq \tfrac{1}{2}(a^2+b^2)$ with $a=\Vrt{v}_V^{1/2}$ and $b=(\alpha^{-2}\tau\Vrt{v}_V)^{1/2}$, and using that $\Vrt{v}_V\leq 1$ is equivalent to $\Vrt{v}_V^2\leq \Vrt{v}_V$. This implies the first inequality below:
    \begin{align*}
     &\frac{1}{\Vrt{v}_V^s}\bb{E}^{u}\left[\left(\sum^\infty_{\ell=2}\frac{1}{\ell!}(\Ang{M^v,M^v}_\tau^{1/2}+\Ang{M^v,M^v}_\tau)^\ell\right)^s\right] < \bb{E}^{u}\left[ \left(\frac{1}{\Vrt{v}_V}\sum^\infty_{\ell=2}\frac{\Vrt{v}_V^\ell}{\ell!}( 1+2\alpha^{-2}\tau)^\ell\right)^s\right]
     \\
     &\leq \bb{E}^{u}\left[ \left(\sum^\infty_{\ell=2}\frac{\Vrt{v}_V^{\ell/2}}{\ell!}(1+2\alpha^{-2} \tau)^\ell\right)^s\right]
     \\
     &< \bb{E}^{u}\left[\exp\left(s\Vrt{v}_V^{1/2}\left(1+2\alpha^{-2}\tau\right)\right)\right]=\exp\left(s\Vrt{v}_V^{1/2}\right)\bb{E}^{u}\left[\exp\left(2\alpha^{-2}s\Vrt{v}_V^{1/2}\tau\right)\right].
    \end{align*}
    The second inequality follows since $0<\Vrt{v}_V\leq 1$ implies that for every $\ell\geq 2$, $\Vrt{v}_V^{-1}\leq \Vrt{v}_V^{-\ell/2}$. The third inequality follows from the series expansion of the exponential. 
   By \cref{asmp:ball_of_admissible_changes_of_drift}, there exists $\lambda_u>0$ such that $\bb{E}^{u}[\exp(\lambda_u\tau)]$ is finite. Thus, if $2\alpha^{-2}s\Vrt{v}_V^{1/2}\leq \lambda_u$, then the last term is finite, and we may apply the dominated convergence theorem to prove \eqref{eq:convergence_lemma_for_change_of_measure_minus_one_minus_CLMG_toprove}, as desired.
\end{proof}

Below, we shall use the following generalisation of the bound \eqref{eq:Lr_bound_clmg} from \cref{lem:uniform_bound_on_quadratic_variation_time_t}, which uses \cref{asmp:uniform_lower_bound_on_smallest_singular_value_of_diffusion}: for $r,s\geq 1$,
\begin{equation}
\label{eq:Lr_bound_clmg_to_power_s}
 \Vrt{(M^w_\tau)^s}_{L^r(\mu^{u})}=\Vrt{M^w_\tau}_{L^{rs}(\mu^{u})}^{s}\leq \left(2\alpha^{-1}\Vrt{w}_V\right)^{s}\Vrt{\tau}^{s/2}_{L^{rs/2}(\mu^{u})}.
\end{equation}
For the rest of this section, we shall denote by $ Z$ a `placeholder' random variable. See \cref{rem:placeholder}. In particular, we may consider $ Z$ as being independent of $u$.

\begin{lemma}
 \label{lem:frechet_derivative_for_product_of_Mu_clmg_with_Z}
 Let \cref{asmp:uniform_lower_bound_on_smallest_singular_value_of_diffusion} and \cref{asmp:ball_of_admissible_changes_of_drift} hold, let $u\in U$, and let $Z$ be a random variable that has absolute moments of all orders with respect to $\mu^{u}$. Then the Fr\'{e}chet derivative of $U\ni u'\mapsto \bb{E}^{u'}[M^{u'}_\tau Z]$ at $u$ is given by
 \begin{equation*}
  V\ni v\mapsto \bb{E}^{u}\left[\left(M^{u}_\tau+1\right) Z M^{v}_{\tau}\right].
 \end{equation*}
\end{lemma}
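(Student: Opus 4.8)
The plan is to exploit the linearity \eqref{eq:linearity_of_change_of_drift_to_CLMG} of $v\mapsto M^v$ together with the change of measure \eqref{eq:change_of_measure}, exactly as in the proof of \cref{lem:frechet_derivative_for_functional_not_depending_on_perturbation}, but now tracking the extra dependence of the integrand $M^{u'}_\tau$ on the base point. First I would write $M^{u+v}_\tau = M^u_\tau + M^v_\tau$ and use \eqref{eq:change_of_measure} to obtain $\bb{E}^{u+v}[M^{u+v}_\tau Z] = \bb{E}^{u}[(M^u_\tau+M^v_\tau)Z\,\mathcal{E}(M^v)_\tau]$; all expectations are then taken against the fixed measure $\mu^{u}$. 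Subtracting the zeroth-order term $\bb{E}^{u}[M^u_\tau Z]$ and the candidate derivative $\bb{E}^{u}[(M^u_\tau+1)Z M^v_\tau]$ and collecting terms, the remainder splits as
\begin{equation*}
 \bb{E}^{u}\!\left[M^u_\tau Z\big(\mathcal{E}(M^v)_\tau-1-M^v_\tau\big)\right] + \bb{E}^{u}\!\left[Z\,M^v_\tau\big(\mathcal{E}(M^v)_\tau-1\big)\right].
\end{equation*}
The ``$+1$'' in the statement is precisely the contribution of differentiating the functional $M^{u'}_\tau$ itself, whose functional derivative in the direction $v$ is $M^v_\tau$; this matches the rule of thumb recorded after \cref{lem:mixed_second_order_frechet_derivative_for_KLD_mu_u_bar_mu_0_clmg_form}.

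Next I would show that each of the two remainder terms is $o(\Vrt{v}_V)$. For the first term, the Cauchy-Schwarz inequality gives the bound $\Vrt{M^u_\tau Z}_{L^2(\mu^{u})}\Vrt{\mathcal{E}(M^v)_\tau-1-M^v_\tau}_{L^2(\mu^{u})}$, where $\Vrt{M^u_\tau Z}_{L^2(\mu^{u})}\leq \Vrt{M^u_\tau}_{L^4(\mu^{u})}\Vrt{Z}_{L^4(\mu^{u})}$ is finite by \eqref{eq:Lr_bound_clmg} (with $\Vrt{\tau}_{L^2(\mu^{u})}<\infty$ from \cref{asmp:ball_of_admissible_changes_of_drift}) and the moment hypothesis on $Z$; dividing by $\Vrt{v}_V$ and letting $\Vrt{v}_V\to 0$, the factor involving $\mathcal{E}(M^v)_\tau-1-M^v_\tau$ vanishes by \eqref{eq:convergence_lemma_for_change_of_measure_minus_one_minus_CLMG}. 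For the second term I would apply H\"older with the three conjugate exponents $3,3,3$ to get $\Vrt{Z}_{L^3(\mu^{u})}\Vrt{M^v_\tau}_{L^3(\mu^{u})}\Vrt{\mathcal{E}(M^v)_\tau-1}_{L^3(\mu^{u})}$; here \eqref{eq:Lr_bound_clmg} bounds $\Vrt{M^v_\tau}_{L^3(\mu^{u})}$ by a constant multiple of $\Vrt{v}_V$, so after dividing by $\Vrt{v}_V$ the remaining factor $\Vrt{\mathcal{E}(M^v)_\tau-1}_{L^3(\mu^{u})}$ tends to $0$ by \eqref{eq:convergence_lemma_for_change_of_measure_minus_one} (taking $r=0$). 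Summing the two estimates yields the Fr\'echet limit.

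Finally I would verify that the candidate map $v\mapsto\bb{E}^{u}[(M^u_\tau+1)Z M^v_\tau]$ is indeed a bounded linear operator: linearity is immediate from the linearity of $v\mapsto M^v_\tau$ in \eqref{eq:clmg}, and boundedness follows from Cauchy-Schwarz and \eqref{eq:Lr_bound_clmg} via $\abs{\bb{E}^{u}[(M^u_\tau+1)Z M^v_\tau]}\leq \Vrt{(M^u_\tau+1)Z}_{L^2(\mu^{u})}\,2\alpha^{-1}\Vrt{\tau}_{L^1(\mu^{u})}^{1/2}\Vrt{v}_V$, the first factor being finite by the moment assumptions. The main obstacle, compared with \cref{lem:frechet_derivative_for_functional_not_depending_on_perturbation}, is the parametric dependence of the integrand on the base point $u$: one must keep the decomposition $M^{u+v}=M^u+M^v$ and correctly identify the $\bb{E}^{u}[Z M^v_\tau(\mathcal{E}(M^v)_\tau-1)]$ cross-term, which is what forces the slightly subtler $o(\Vrt{v}_V)$ estimate through \eqref{eq:convergence_lemma_for_change_of_measure_minus_one} rather than \eqref{eq:convergence_lemma_for_change_of_measure_minus_one_minus_CLMG} alone; everything else is a routine application of the estimates already assembled in \cref{lem:uniform_bound_on_quadratic_variation_time_t} and \cref{lem:convergence_lemmas_for_change_of_measure}.
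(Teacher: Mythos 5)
Your proposal is correct and follows essentially the same route as the paper's own proof: the same decomposition $M^{u+v}_\tau=M^u_\tau+M^v_\tau$ under the change of measure, the same two-term remainder, the first term handled via \eqref{eq:convergence_lemma_for_change_of_measure_minus_one_minus_CLMG} and the cross-term via \eqref{eq:convergence_lemma_for_change_of_measure_minus_one} after extracting a factor of $\Vrt{v}_V$ from $M^v_\tau$. The only (immaterial) differences are your choice of H\"older exponents $(3,3,3)$ for the cross-term where the paper uses Cauchy--Schwarz twice, and $L^2$ rather than $L^3$ norms in the boundedness check.
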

\begin{proof}
 By \eqref{eq:change_of_measure} and \eqref{eq:clmg},
\begin{equation*}
  \bb{E}^{u+w}[M^{u+w}_{\tau} Z ]=\bb{E}^{u}\left[\mathcal{E}(M^w)_\tau\left(M^u_\tau+M^w_\tau\right) Z\right].
 \end{equation*}
 Hence,
 \begin{align}
  &\abs{\bb{E}^{u+w}\left[M^{u+w}_{\tau}Z\right]-\bb{E}^{u}\left[M^u_\tau Z\right]-\bb{E}^{u}\left[\left(M^{u}_\tau+1\right)ZM^{w}_{\tau} \right]}
  \nonumber
  \\
  =&\abs{\bb{E}^{u}\left[ M^u_\tau Z\left(\mathcal{E}(M^w)_\tau-1-M^w_\tau\right)+ M^w_\tau Z \left(\mathcal{E}(M^w)_\tau-1\right)\right] }
  \nonumber
  \\
  \leq &\Vrt{M^u_\tau Z}_{L^2(\mu^{u})}\Vrt{\mathcal{E}(M^w)_\tau-1-M^w_\tau}_{L^2(\mu^{u})}+\Vrt{M^w_\tau Z}_{L^2(\mu^{u})}\Vrt{\mathcal{E}(M^w)_\tau-1}_{L^2(\mu^{u})}.
  \label{eq:intermediate10}
  \end{align}
 Now by H\"{o}lder's inequality and \eqref{eq:Lr_bound_clmg},
 \begin{equation*}
  \Vrt{M^u_\tau Z}_{L^2(\mu^{u})}\leq \Vrt{M^u_\tau}_{L^4(\mu^{u})}\Vrt{Z}_{L^4(\mu^{u})}\leq  2\alpha^{-1}\Vrt{u}_V\Vrt{\tau}_{L^2(\mu^{u})}^{1/2}\Vrt{Z}_{L^4(\mu^{u})}
 \end{equation*}
 and a corresponding bound holds for $\Vrt{M^w_\tau Z}_{L^2(\mu^{u})}$. Thus, the terms that do not involve $\mathcal{E}(M^w)_\tau$ are finite, since $\tau$ has moments of all orders by \cref{asmp:ball_of_admissible_changes_of_drift}. Divide the inequality \eqref{eq:intermediate10} by $\Vrt{w}_V$ and let $\Vrt{w}_V\to 0$. Then the term corresponding to the first term on the right-hand side of \eqref{eq:intermediate10} vanishes by \eqref{eq:convergence_lemma_for_change_of_measure_minus_one_minus_CLMG}. The term corresponding to the remaining term vanishes by \eqref{eq:convergence_lemma_for_change_of_measure_minus_one}, because
 \begin{equation*}
  \frac{\Vrt{M^w_\tau Z}_{L^2(\mu^{u})}\Vrt{\mathcal{E}(M^w)_\tau-1}_{L^2(\mu^{u})}}{\Vrt{w}_V}\leq 2\alpha^{-1}\Vrt{\tau}_{L^2(\mu^{u})}^{1/2}\frac{ \Vrt{w}_V\Vrt{\mathcal{E}(M^w)_\tau-1}_{L^2(\mu^{u})}}{\Vrt{w}_V}.
 \end{equation*}
 Linearity of the derivative follows from the linearity of $v\mapsto M^v_\tau$ in \eqref{eq:clmg}, while boundedness follows from the fact that
 \begin{equation*}
  \abs{ \bb{E}^{u}\left[\left(M^{u}_\tau+1\right) Z M^{v}_{\tau}\right]}\leq \left(\Vrt{M^u_\tau}_{L^3(\mu^{u})}+1\right)\Vrt{Z}_{L^3(\mu^{u})}\Vrt{M^v_\tau}_{L^3(\mu^{u})}
 \end{equation*}
and using \eqref{eq:Lr_bound_clmg}.
\end{proof}

\begin{lemma}
 \label{lem:frechet_derivative_for_KLD_mu_u_bar_mu_0_precursor}
 Let \cref{asmp:uniform_lower_bound_on_smallest_singular_value_of_diffusion} and \cref{asmp:ball_of_admissible_changes_of_drift} hold, let $u\in U$, and let $Z$ be a random variable that has absolute moments of all orders with respect to $\mu^{u}$. Then the Fr\'{e}chet derivative of $U\ni u'\mapsto \bb{E}^{u'}[(M^{u'}_\tau)^2 Z]$ at $u$ is given by
 \begin{equation*}
  V\ni v\mapsto \bb{E}^{u}\left[\left(\left(M^{u}_\tau\right)^2+2M^{u}_{\tau}\right) Z M^{v}_{\tau}\right].
 \end{equation*}
\end{lemma}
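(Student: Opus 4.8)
The plan is to follow verbatim the strategy of the proof of \cref{lem:frechet_derivative_for_product_of_Mu_clmg_with_Z}, replacing the linear functional $M^{u'}_\tau$ by the quadratic functional $(M^{u'}_\tau)^2$. First I would fix an arbitrary increment $w\in V$ and apply the change of measure formula \eqref{eq:change_of_measure} together with the linearity \eqref{eq:linearity_of_change_of_drift_to_CLMG} of $v\mapsto M^v_\tau$ and the binomial expansion $(M^{u+w}_\tau)^2=(M^u_\tau)^2+2M^u_\tau M^w_\tau+(M^w_\tau)^2$ to obtain
\begin{equation*}
\bb{E}^{u+w}\left[(M^{u+w}_\tau)^2 Z\right]=\bb{E}^{u}\left[\mathcal{E}(M^w)_\tau\left((M^u_\tau)^2+2M^u_\tau M^w_\tau+(M^w_\tau)^2\right)Z\right].
\end{equation*}
Subtracting the zeroth-order term $\bb{E}^{u}[(M^u_\tau)^2 Z]$ and the candidate first-order term $\bb{E}^{u}[((M^u_\tau)^2+2M^u_\tau)ZM^w_\tau]$, and collecting terms according to their power of $M^u_\tau$, the remainder rearranges into the three summands
\begin{equation*}
\bb{E}^{u}\left[(\mathcal{E}(M^w)_\tau-1-M^w_\tau)(M^u_\tau)^2 Z\right]+2\bb{E}^{u}\left[(\mathcal{E}(M^w)_\tau-1)M^u_\tau M^w_\tau Z\right]+\bb{E}^{u}\left[\mathcal{E}(M^w)_\tau (M^w_\tau)^2 Z\right],
\end{equation*}
which exhibits exactly the factors $\mathcal{E}(M^w)_\tau-1-M^w_\tau$ and $\mathcal{E}(M^w)_\tau-1$ that are controlled by \cref{lem:convergence_lemmas_for_change_of_measure}.

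Next I would bound each summand by H\"{o}lder's inequality and show that after division by $\Vrt{w}_V$ it tends to zero as $\Vrt{w}_V\to 0$. For the first summand, pull out the factor $\Vrt{(M^u_\tau)^2 Z}_{L^2(\mu^{u})}$, which is finite by \eqref{eq:Lr_bound_clmg_to_power_s} and the assumption that $Z$ has absolute moments of all orders, and conclude using \eqref{eq:convergence_lemma_for_change_of_measure_minus_one_minus_CLMG}. For the second summand, H\"{o}lder gives a product in which $\Vrt{M^w_\tau}$ contributes a factor of order $\Vrt{w}_V$ via \eqref{eq:Lr_bound_clmg}; after dividing by $\Vrt{w}_V$ the surviving factor is $\Vrt{\mathcal{E}(M^w)_\tau-1}_{L^s(\mu^{u})}$, which vanishes by \eqref{eq:convergence_lemma_for_change_of_measure_minus_one} taken with $r=0$. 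For the third summand, $\Vrt{(M^w_\tau)^2}$ already contributes a factor of order $\Vrt{w}_V^2$ by \eqref{eq:Lr_bound_clmg_to_power_s}, while $\Vrt{\mathcal{E}(M^w)_\tau}_{L^s(\mu^{u})}$ stays finite and bounded for $\Vrt{w}_V$ small by \cref{cor:square_integrability_of_change_of_measure_at_stopping_time}, so this term is $O(\Vrt{w}_V^2)=o(\Vrt{w}_V)$.

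The step I expect to require the most care is the third summand: unlike in \cref{lem:frechet_derivative_for_product_of_Mu_clmg_with_Z}, squaring produces the genuinely quadratic cross term $(M^w_\tau)^2$, which is multiplied by the full Dol\'{e}ans exponential $\mathcal{E}(M^w)_\tau$ rather than by $\mathcal{E}(M^w)_\tau-1$; closing the estimate therefore relies essentially on the $L^q$-integrability of $\mathcal{E}(M^w)_\tau$ from \cref{cor:square_integrability_of_change_of_measure_at_stopping_time} and on choosing the H\"{o}lder exponents so that every moment of $\tau$ and $Z$ that appears is finite. Finally, to identify the candidate as the Fr\'{e}chet derivative I would verify that the map $V\ni v\mapsto \bb{E}^{u}[((M^u_\tau)^2+2M^u_\tau)ZM^v_\tau]$ is linear, which follows from the linearity of $v\mapsto M^v_\tau$ in \eqref{eq:clmg}, and bounded, which follows from H\"{o}lder's inequality together with \eqref{eq:Lr_bound_clmg} and the finiteness of $\Vrt{(M^u_\tau)^2+2M^u_\tau}_{L^p(\mu^{u})}$ and $\Vrt{Z}_{L^q(\mu^{u})}$.
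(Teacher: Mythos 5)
Your proposal is correct and follows essentially the same route as the paper's proof: the same change-of-measure plus binomial expansion, the same three-term decomposition isolating the factors $\mathcal{E}(M^w)_\tau-1-M^w_\tau$, $\mathcal{E}(M^w)_\tau-1$, and $\mathcal{E}(M^w)_\tau$, the same use of \cref{lem:convergence_lemmas_for_change_of_measure} and \eqref{eq:Lr_bound_clmg_to_power_s}, and the same linearity/boundedness check at the end. Your explicit appeal to \cref{cor:square_integrability_of_change_of_measure_at_stopping_time} for the third summand only spells out what the paper leaves implicit.
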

\begin{proof}
 By \eqref{eq:change_of_measure} and \eqref{eq:clmg},
 \begin{equation*}
  \bb{E}^{u+w}[(M^{u+w}_{\tau})^2 Z ]=\bb{E}^{u}\left[\mathcal{E}(M^w)_\tau\left(\left(M^u_\tau\right)^2+2 M^{u}_{\tau}M^{w}_{\tau}+(M^{w}_{\tau})^2\right) Z\right].
 \end{equation*}
 Hence,
 \begin{align*}
  &\abs{\bb{E}^{u+w}\left[(M^{u+w}_{\tau})^2Z\right]-\bb{E}^{u}\left[(M^u_\tau)^2 Z\right]-\bb{E}^{u}\left[\left(\left(M^{u}_\tau\right)^2+2M^{u}_{\tau}\right)M^{v}_{\tau} Z\right]}
  \\
  =&\abs{\bb{E}^{u}\left[ \left(M^u_\tau\right)^2 Z\left(\mathcal{E}(M^w)_\tau-1-M^w_\tau\right)+2 M^u_\tau M^w_\tau Z \left(\mathcal{E}(M^w)_\tau-1\right)+(M^w_\tau)^2 Z\mathcal{E}(M^w)_\tau\ \right] }
  \\
  \leq &\Vrt{(M^u_\tau)^2 Z}_{L^2(\mu^{u})}\Vrt{\mathcal{E}(M^w)_\tau-1-M^w_\tau}_{L^2(\mu^{u})}+2\Vrt{M^u_\tau M^w_\tau Z}_{L^2(\mu^{u})}\Vrt{\mathcal{E}(M^w)_\tau-1}_{L^2(\mu^{u})}
  \\
  &+\Vrt{(M^w_\tau)^2 Z}_{L^2(\mu^{u})}\Vrt{\mathcal{E}(M^w)_\tau}_{L^2(\mu^{u})}.
 \end{align*}
  Using H\"{o}lder's inequality and \eqref{eq:Lr_bound_clmg_to_power_s}, all the terms on the right-hand side of the inequality that do not involve $\mathcal{E}(M^w)_\tau$ are finite, since \cref{asmp:ball_of_admissible_changes_of_drift} implies that $\tau$ has moments of all orders with respect to $\mu^{u}$, and since $Z$ has absolute moments of all orders. Divide the right-hand side of the inequality by $\Vrt{w}_V$, let $\Vrt{w}_V\to 0$, and consider the limiting behaviour of the three terms. The terms corresponding to the first and second terms on the right-hand side of the inequality vanish by \eqref{eq:convergence_lemma_for_change_of_measure_minus_one_minus_CLMG} and by \eqref{eq:convergence_lemma_for_change_of_measure_minus_one} respectively. The third term vanishes by using \eqref{eq:Lr_bound_clmg_to_power_s} with $r=s=2$.
Linearity of the map above follows from linearity of the map $v\mapsto M^v_\tau$ in \eqref{eq:clmg}. Boundedness follows by using
 \begin{equation*}
  \abs{\bb{E}^{u}\left[\left(\left(M^{u}_\tau\right)^2+2M^{u}_{\tau}\right)ZM^{v}_{\tau}\right]}\leq \left(\Vrt{(M^u_\tau)^2}_{L^3(\mu^{u})}+2\Vrt{M^u_\tau}_{L^3(\mu^{u})}\right)\Vrt{Z}_{L^3(\mu^{u})}\Vrt{M^v_\tau}_{L^3(\mu^{u})}
 \end{equation*}
 and \eqref{eq:Lr_bound_clmg_to_power_s}. 
\end{proof}

Recall that \cref{lem:frechet_derivative_for_KLD_mu_u_bar_mu_0_clmg_form} states that if  \cref{asmp:uniform_lower_bound_on_smallest_singular_value_of_diffusion} and \cref{asmp:ball_of_admissible_changes_of_drift} hold, then the Fr\'{e}chet derivative of $U\ni u'\mapsto \bb{E}^{u'}[(M^{u'}_\tau)^2]$ at $u$ is given by
 \begin{equation}
  \label{eq:frechet_derivative_for_KLD_mu_u_bar_mu_0}
   V\ni v\mapsto \bb{E}^{u}\left[\left(\left(M^{u}_\tau\right)^2+2M^{u}_{\tau}\right)M^{v}_{\tau}\right].
 \end{equation}

\begin{proof}[Proof of \cref{lem:frechet_derivative_for_KLD_mu_u_bar_mu_0_clmg_form}]
 Apply \cref{lem:frechet_derivative_for_KLD_mu_u_bar_mu_0_precursor} with $Z$ being the constant random variable 1.
\end{proof}

Next, recall that \cref{lem:mixed_second_order_frechet_derivative_for_KLD_mu_u_bar_mu_0_clmg_form} states that if \cref{asmp:uniform_lower_bound_on_smallest_singular_value_of_diffusion} and \cref{asmp:ball_of_admissible_changes_of_drift} hold, then for $n\in\bb{N}\setminus\{1\}$, the $n$-th order Fr\'{e}chet derivative of $U\ni u'\mapsto \bb{E}^{u'}[(M^{u'}_\tau)^2]$ at $u$ is given by
\begin{equation*}
 \times_{k=1}^{n}V\ni (v_1,\ldots,v_n)\mapsto \bb{E}^{u}\left[ \left(\left(M^u_\tau\right)^2+2n M^u_\tau +n(n-1)\right)\prod_{k=1}^{n}M^{v_k}_\tau\right].
\end{equation*}
 for $n\in\bb{N}$.
\begin{proof}[Proof of \cref{lem:mixed_second_order_frechet_derivative_for_KLD_mu_u_bar_mu_0_clmg_form}]
For the proof below, we shall use the fact that $2\sum_{i=0}^{n-1}i=n(n-1)$, and we shall use induction. The base case where $n=1$ follows from \cref{lem:frechet_derivative_for_KLD_mu_u_bar_mu_0_clmg_form}. For the inductive step, assume that the statement is true for $n\leq 1$. Since the Fr\'{e}chet derivative is a linear operator, we may apply \cref{lem:frechet_derivative_for_KLD_mu_u_bar_mu_0_precursor} with $Z=\prod_{k=1}^{n}M^{v_k}_\tau$ to conclude that the Fr\'{e}chet derivative of $u'\mapsto \bb{E}^{u'}[(M^{u'}_\tau)^2\prod_{k=1}^{n} M^{v_k}_\tau]$ at $u$ is 
\begin{equation*}
V\ni v_{n+1}\mapsto \bb{E}^{u}\left[\left( \left(M^u_\tau\right)^2+2M^u_\tau\right)\prod_{k=1}^{n+1}M^{v_k}_\tau\right].
\end{equation*}
Next, we may apply \cref{lem:frechet_derivative_for_product_of_Mu_clmg_with_Z} with $Z=\prod_{k=1}^{n}M^{v_k}_\tau$ to conclude that the Fr\'{e}chet derivative of $u'\mapsto 2n\bb{E}^{u'}[M^{u'}_\tau\prod_{k=1}^{n} M^{v_k}_\tau]$ at $u$ is 
\begin{equation*}
V\ni v_{n+1}\mapsto 2n\bb{E}^{u}\left[\left( M^u_\tau+1\right)\prod_{k=1}^{n+1}M^{v_k}_\tau\right].
\end{equation*}
Then, we apply \cref{lem:frechet_derivative_for_functional_not_depending_on_perturbation} with $\phi_\tau=\left(2\sum_{i=0}^{n-1}i\right)\prod_{k=1}^{n} M^{v_k}_\tau$ to conclude that the Fr\'{e}chet derivative of $u'\mapsto \left(2\sum_{i=0}^{n-1}i\right)\bb{E}^{u'}[\prod_{k=1}^{n} M^{v_k}_\tau]$ at $u$ is 
\begin{equation*}
 V\ni v_{n+1}\mapsto \left(2\sum_{i=0}^{n-1}i\right)\bb{E}^{u}\left[\prod_{k=1}^{n+1} M^{v_k}_\tau\right].
\end{equation*}
Summing these derivatives yields
\begin{align*}
 V\ni v_{n+1}\mapsto &\bb{E}^{u}\left[\left( \left(M^u_\tau\right)^2+2M^u_\tau+2n(M^u_\tau+1)+2\sum_{i=0}^{n-1}i\right)\prod_{k=1}^{n+1}M^{v_k}_\tau\right]
 \\
 =&\bb{E}^{u}\left[\left( \left(M^u_\tau\right)^2+2(n+1)M^u_\tau+2\sum_{i=0}^{n}i\right)\prod_{k=1}^{n+1}M^{v_k}_\tau\right],
\end{align*}
and thus completes the inductive step.
\end{proof}

\bibliographystyle{amsplain}
\bibliography{references}

\end{document}